\newcommand{\BB}{{\mathbb{B}}}
\newcommand{\RR}{{\mathbb{R}}}
\newcommand{\rp}{{\rho^\prime}}
\newcommand{\rpp}{{\rho^{\prime\prime}}}
\newcommand{\delr}{{\Delta_r\rho}}
\newcommand{\rstar}{{r^*}}
\newcommand{\gstar}{{\gamma^*}}
\newcommand{\Ostar}{{\Omega^*}}
\newcommand{\jpp}{{j_1^{\prime\prime}}}
\newcommand{\ipp}{{i_1^{\prime\prime}}}
\newcommand{\xmax}{{x_{\max}}}
\newcommand{\xmid}{{x_{\text{mid}}}}
\newcommand{\bmax}{{b_{\max}}}
\newcommand{\bmaxsq}{{b_{\max}^2}}
\newcommand{\bmidsq}{{b_{\text{mid}}^2}}
\newcommand{\tmax}{{\tau_{\max}}}
\newcommand{\tmin}{{\tau_{\min}}}
\newcommand{\tmid}{{\tau_{\text{mid}}}}
\newcommand{\thetahat}{{\hat{\theta}}}
\newcommand{\Yl}{{Y_l}}
\newcommand{\pll}{{p_{1,1}}}
\newcommand{\pllsq}{{p_{1,1}^2}}
\newcommand{\sproj}{{P_{\partial\Omega}}}
\newcommand{\sdiv}{{\mydiv_{\partial\Omega}}}
\DeclareMathOperator{\BigO}{O}
\DeclareMathOperator{\mydiv}{div}
\newtheorem{thm}{Theorem}[section]
 \newtheorem{cor}[thm]{Corollary}
 \newtheorem{prop}[thm]{Proposition}
\newtheorem{fact}{Fact}
\newtheorem{lemma}[thm]{Lemma}
\newtheorem*{conj}{Conjecture}
\theoremstyle{remark}
\newtheorem*{rmk}{Remark}
\begin{document}
\title[Free plate isoperimetric inequality]{An isoperimetric inequality for fundamental tones of free plates with nonzero Poisson's ratio}
\author[L. M. Chasman]{L. M. Chasman}
\address{%
Division of Math and Science\\ University of Minnesota - Morris\\ 600 E. 4th Street\\Morris
MN 56267\\ U.S.A.} 
\email{chasmanm@morris.umn.edu}

\begin{abstract} We establish a partial generalization of a prior isoperimetric inequality for the fundamental tone (first nonzero eigenvalue) of the free plate to plates of nonzero Poisson's ratio. 

Given a tension $\tau>0$ and a Poisson's ratio $\sigma$, the free plate eigenvalues $\omega$ and eigenfunctions $u$ are determined by the equation $\Delta\Delta u-\tau\Delta u=\omega u$ together with certain natural boundary conditions which involve both $\tau$ and $\sigma$. The boundary conditions are complicated but arise naturally from the plate Rayleigh quotient, which contains a Hessian squared term $|D^2u|^2$. We prove the free plate isoperimetric inequality in the $\sigma=0$ case holds for certain nonzero $\sigma$ and positive $\tau$ in the case where the fundamental mode is assumed to have simple angular dependence. We conjecture that the inequality holds for all dimensions, $\tau>0$, and relevant values of $\sigma$, and discuss numerical and analytic support of this conjecture.

As in the case of $\sigma=0$, we adapt Weinberger’s method from the corresponding free membrane
problem, taking the fundamental modes of the unit ball as trial functions. These solutions are a linear combination of Bessel and modified Bessel functions.
\end{abstract}

\subjclass{Primary 35P15. Secondary 35J40, 35J35}
\keywords{isoperimetric, free plate, bi-Laplace}

\maketitle

\section{Introduction} 
The eigenvalues of the Laplacian operator and its fourth-order cousin the bi-Laplacian appear in many models of physical situations, representing quantities such as frequency or energy. One classic example is that the eigenvalues $\mu$ of the Neumann Laplacian on a bounded region $\Omega$ determine the frequencies of vibration of a free membrane with that shape. If $\Omega^*$ is the ball of same volume as $\Omega$, then we have
\[
\mu_1(\Omega) \leq \mu_1(\Ostar)\qquad\text{with equality if and only if $\Omega$ is a ball.}
\]
First conjectured by Kornhauser and Stakgold \cite{KS52}, this isoperimetric inequality was proved for simply connected domains in $\RR^2$ by Szeg\H o \cite{S50,Serrata} and extended to all domains and dimensions by Weinberger \cite{W56}. 

While Laplacian eigenvalue problems represent vibrations of membranes, there are corresponding bi-Lapalace problems represent vibrations and buckling energies of plates. Fourth-order plate problems are frequently more difficult than their second-order analogs -- the theory of the bi-Laplace operator is not as well understood, and because the order is higher, exact solutions (such as those used for trial function methods) can require more complicated linear combinations of special functions. 

The isopermetric inequality for the fundamental tone of the Dirichlet Laplacian (drum) was proved by Faber \cite{Faber23} and Krahn \cite{krahn25,krahn26} in the 1920's with the ball as the minimizer. It was not until the 1980's and 90s' that it was proved that the ball also the lower bound for the first clamped plate eigenvalue \cite{T81, N92, N95, AB95plate}. The methods used by Talenti, Nadirashvilli, Ashbaugh and Benguria to prove the clamped plate isoperimetric inequality are quite different than those for the free plate and membrane and only establish the bound in dimensions 2 and 3. The problem remains open for dimensions four and higher, with a partial result by Ashbaugh and Laugesen \cite{AL96}. 

For forth-order problems. other boundary conditions exist, such as the hinged plate investigated by Nazarov and Sweers \cite{NS07}. Other generalizations of the Sz\"ego-Weinberger inequality include its analog in other spaces. In spaces of constant curvature, the spherical cap (analog of the ball) maximizes the first Neumann eigenvalue, as seen by Ashbaugh and Benguria\cite{AB95} In Gauss space, the problem was cconsidered by Chiacchio and Di Blasio \cite{CdB12}. As in Euclidean space this gives an upper bound on the fundamental tone of the Neumann Gaussian Laplacian (Hermitian); one can also consider lower bounds on the Neumann eigenvalues, eg, \cite{BCHT13,BCT13}.

In \cite{inequalitypaper}, we adapted Weinberger's trial function argument to prove the free plate analog of the Sz\"ego-Weinberger inequality for plates with positive tension and assuming the Poisson's ratio (a property of the material) of the plate was zero. Taking $\omega_1$ to be the fundamental tone, we had that:
\begin{equation}\label{plateiso}
\omega_1(\Omega) \leq \omega_1(\Ostar)\qquad\text{with equality if and only if $\Omega$ is a ball.}
\end{equation}
In this paper we present a generalization of this result to some plates under tension with \emph{nonzero} values of Poisson's ratio. We prove that if the dimension, tension, and Poisson's ratio are such that fundamental mode of the ball has simple angular dependence, we again have the bound~\eqref{plateiso}. Our proof relies on the variational characterization of eigenvalues with suitable trial functions, taken to be extensions of the fundamental mode of the unit ball. This follows both Weinberger's approach for the free membrane and our prior work for the free plate with zero Poisson's ratio in \cite{inequalitypaper}.  However, because the plate equation is fourth order, finding the trial functions and establishing the appropriate monotonicities is significantly more complicated than in the membrane case. The inclusion of $\sigma$ further complicates matters and prevents us from applying some of our tools from \cite{inequalitypaper}, including identifying the fundamental mode of the ball.

Based on numerical evidence and analytic reasoning, we conjecture that the fundamental mode of the ball has simple angular dependence (ie, in dimension 2 the angular part can be written as $\sin(\theta)$ or $\cos(\theta)$) for all dimensions, positive tension and mathematically suitable values of Poisson's ratio, and so the isoperimetric inequality~\eqref{plateiso} holds for all plates.

Poisson's ratio, which we will denote by $\sigma$, is a property of the material of the plate. If a material is stretched in one direction, it usually contracts in the orthogonal directions; the value $\sigma$ is a ratio of the strains. In some materials the material expands in the orthogonal directions rather than contracting; these materials have $\sigma<0$ and are called auxetic.

Considering nonzero Poisson's ratio is a natural generalization of the free plate problem because $\sigma$ appears in the Rayleigh quotient for the plate, even though it does not appear in the eigenvalue equation itself; instead, we see explicit dependence on $\sigma$ in the natural boundary conditions. Verchota recently established the solvability of the biharmonic Neumann problem \cite{verchota}, the boundary conditions for which arise from the zero-tension plate with nonzero Poisson's ratio. Interestingly, the clamped plate problem is independent of $\sigma$. Although the clamped plate problem begins from the plate Rayleigh quotient, integration by parts and the imposed boundary conditions allow the clamped plate quotient to be written in its more familiar form, which is independent of $\sigma$.

This paper proceeds as follows: we begin by formulating the problem and stating our main theorem, a partial result towards the conjectured isoperimetric inequality. We then prove existence of the discrete spectrum and regularity of the eigenfunctions for specific values of $\sigma$ in Section~\ref{specsec} and use the Rayleigh quotient to establish bounds on the fundamental tone as a function of $\sigma$ and $\tau$ in Section~\ref{functausec}. 

To prepare to prove the theorem, we establish crucial properties of ultraspherical Bessel functions in Section~\ref{Besselsec}, derive the form of the natural boundary conditions in Section~\ref{bcsec}. We use these in Section~\ref{fundtonesec} to find the eigenfunctions of the ball, where we also state and discuss our conjecture that the fundamental mode has simple angular dependence. We use the fundamental mode to construct our trial functions and establish some properties of these in Section~\ref{trialfcnsec}. From there we proceed to prove our main theorem in Sections~\ref{monotonesec} and~\ref{proofsec}.

\section{ Formulating the problem} 
Let $\Omega\subseteq\RR^d$ be a smoothly bounded region. We will write $\Omega^*$ for the ball in $\RR^d$ with the same volume as $\Omega$.

The generalized plate Rayleigh quotient has the form
\begin{equation}\label{RQ}
 Q_\Omega[u]:=\frac{\int_\Omega (1-\sigma)|D^2u|^2+\sigma(\Delta u)^2+\tau|Du|^2\,dx}{\int_\Omega u^2\,dx}.
\end{equation}
Here the parameter $\tau$ measures tension over flexural rigidity, and $\sigma$ is Poisson's ratio. A positive $\tau$ represents a plate under tension; taking $\tau<0$ gives us a plate under compression. The limiting case as $\tau\to\infty$ is more naturally understood as the limit as \emph{rigidity} goes to zero; in other words, the plate should behave like a membrane for larger $\tau$. For Poisson's ratio, typically $\sigma\in[0,0.5]$ for real-world materials, although a class of materials known as auxetics have negative Poisson's ratio. We will take $\sigma$ to be in $(-1/(d-1),1)$ in order to be assured of coercivity of our form.

From the Rayleigh quotient \eqref{RQ}, we derive the partial differential equation and boundary conditions governing the vibrational modes of a free plate. The critical points of the quotient are the eigenstates for the plate satisfying the free boundary conditions and the critical values are the corresponding eigenvalues. We shall show in Section~\ref{bcsec} that the differential eigenvalue equation is
\begin{equation}
\Delta \Delta u - \tau \Delta u = \omega u, \label{mainineq}
\end{equation}
where $\omega$ is the eigenvalue, with the natural (\emph{i.e.}, unconstrained or ``free'') boundary conditions on $\partial\Omega$:
\begin{align}
&Mu := \frac{\partial^2 u}{\partial n^2}= 0 \label{BC1}\\
&Vu := \tau\frac{\partial u}{\partial n}-\sdiv\left(\sproj\left[(D^2u)n\right]\right)-\frac{\partial(\Delta u)}{\partial n} = 0\label{BC2}.
 \end{align}
 Here $\partial/\partial n$ denotes the normal derivative and $\sdiv$ is the surface divergence, and  $\sproj$ projects a vector into the tangent space of $\partial\Omega$. 
 
The fundamental tone (lowest nonzero eigenvalue) of the plate with shape $\Omega$ can then be written with the Rayleigh-Ritz characterization as follows:
\[
 \omega_1(\Omega)=\inf\left\{Q_\Omega[u]:u\in H^2{\Omega}, \int_\Omega u\,dx=0.\right\}
\]
We conjecture the following isoperimetric inequality:
\begin{conj}\label{mainconj} Let $\Omega\subset\RR^d$ be a smoothly bounded region, and supposed we have $\tau>0$ and $\sigma\in(-1/(d-1),1)$ fixed. Then
\[
\omega_1(\Omega)\leq\omega_1(\Ostar),                                                     
\]
with equality if and only if $\Omega=\Ostar$.
\end{conj}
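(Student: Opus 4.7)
The plan is to adapt Weinberger's trial function method, following the author's approach in \cite{inequalitypaper} for $\sigma=0$ but accommodating the additional $\sigma$-dependence that enters through both the Rayleigh quotient \eqref{RQ} and the natural boundary conditions \eqref{BC1}--\eqref{BC2}. Throughout I would assume, as in the partial theorem of this paper, that the fundamental mode of the unit ball $\BB$ has simple angular dependence, so that it takes the form $\rho(r)x_i/|x|$ for $i=1,\ldots,d$, where $\rho$ is the radial profile derived in Section~\ref{fundtonesec}. By scaling I may assume $\Omega$ has the same volume as $\BB$, so that $\Ostar=\BB$.

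First I extend $\rho$ beyond $r=1$ (taking it constant, or by whatever prescription preserves the crucial monotonicity below) and, for each $c\in\RR^d$, introduce the trial functions
\[
 g_{c,i}(x) = \rho(|x-c|)\,\frac{x_i-c_i}{|x-c|}, \qquad i=1,\ldots,d.
\]
A Brouwer degree / center-of-mass argument in the spirit of Weinberger produces a translation $c$ for which $\int_\Omega g_{c,i}\,dx = 0$ simultaneously in every $i$, rendering the $g_{c,i}$ admissible in the Rayleigh--Ritz characterization of $\omega_1(\Omega)$. Summing the resulting $d$ Rayleigh quotients and using the spherical identity $\sum_i (x_i-c_i)^2/|x-c|^2 = 1$, both the numerator and denominator collapse into radial integrals in $r=|x-c|$, giving
\[
 \omega_1(\Omega) \leq \frac{\int_\Omega N(r)\,dx}{\int_\Omega D(r)\,dx}
\]
for explicit expressions $N(r),D(r)$ in $\rho,\rp,\rpp,\sigma,\tau,d$, with $D(r)=\rho(r)^2$.

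The problem then reduces to showing that $H(r):=N(r)-\omega_1(\Ostar)D(r)$ is nonincreasing on $[0,\infty)$. Because $\rho(r)x_i/|x|$ is an eigenfunction on $\BB$ with eigenvalue $\omega_1(\Ostar)$, one has $\int_\BB H\,dx=0$; the equal-volume decompositions $\Omega=(\Omega\cap\BB)\cup(\Omega\setminus\BB)$ and $\BB=(\Omega\cap\BB)\cup(\BB\setminus\Omega)$ then give, for $H$ nonincreasing,
\[
 \int_\Omega H\,dx \leq H(1)\bigl[\mathrm{vol}(\Omega\setminus\BB)-\mathrm{vol}(\BB\setminus\Omega)\bigr]=0,
\]
which is the desired isoperimetric inequality. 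Strict monotonicity of $H$ on $(0,1)$ forces the equality case to be $\Omega=\BB$.

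The main obstacle --- and the reason the conjecture remains open in general --- is verifying this monotonicity $H'(r)\leq 0$ on $(0,1)$, where $\rho$ solves the fourth-order radial version of \eqref{mainineq} together with the $\sigma$-laden free boundary conditions $Mu=Vu=0$ at $r=1$. The ultraspherical Bessel identities of Section~\ref{Besselsec} should reduce $H'(r)$ to an expression of controlled sign, and the boundary values $\rho(1),\rp(1),\rpp(1)$ coming from those conditions pin down $H(1)$ and its crossing behavior at the unit sphere. The simple angular dependence hypothesis is essential here: higher-degree spherical harmonics would introduce angular cross-terms and additional $l$-dependent Bessel recurrences that destroy the clean radial reduction used to obtain $N(r)$ and $D(r)$. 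I expect Sections~\ref{monotonesec} and~\ref{proofsec} to be devoted almost entirely to this monotonicity, split into sign cases on various Bessel ratios whose behavior depends delicately on $\sigma$ and $\tau$.
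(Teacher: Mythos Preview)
The statement you are addressing is a \emph{conjecture}; the paper does not prove it. What the paper actually proves is the partial result Theorem~\ref{mainthm}, which assumes both simple angular dependence of the ball's fundamental mode \emph{and} one of several technical restrictions on $(d,\sigma,\tau)$. Your outline tracks the paper's strategy for that partial result closely --- trial functions built from the ball's radial profile, a Brouwer center-of-mass argument for admissibility, summing the $d$ Rayleigh quotients into a radial expression, and a mass-transport/rearrangement step --- and you correctly identify the numerator monotonicity as the obstruction. But since neither you nor the paper can establish that monotonicity in full generality, this remains a proof sketch for the partial theorem, not a proof of the conjecture.

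Two specific points where your sketch diverges from what the paper actually does, and where it matters:

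\textbf{Extension of $\rho$.} You propose extending $\rho$ as a constant past $r=1$. Since $\rho'(1)\neq 0$ in general, that extension has a jump in its first derivative and the resulting trial functions fail to lie in $H^2(\Omega)$. The paper extends $\rho$ \emph{linearly} (Lemma~\ref{trialfcn}), matching both $\rho(1)$ and $\rho'(1)$; this forces $\rho''\equiv 0$ for $r>1$, which is precisely what makes the $(\rho'')^2$ term satisfy the partial monotonicity condition~\eqref{moncond}.

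\textbf{What monotonicity is actually needed.} You ask for $H(r)=N(r)-\omega_1(\Ostar)\rho(r)^2$ to be nonincreasing on $[0,\infty)$. The paper does not, and probably cannot, establish this: $(\rho'')^2$ need not be monotone on $(0,1)$. Instead the paper handles $N$ and $\rho^2$ separately --- proving $\rho^2$ is increasing (Lemma~\ref{mondenom}) and that $N$ satisfies only the weaker ``partial monotonicity'' condition $N(x)>N(y)$ for $|x|<1<|y|$ (Lemma~\ref{monnum} and Proposition~\ref{auxeticparmon}), then invoking the rearrangement Lemma~\ref{monint} on each. Your equal-volume argument in fact only needs this weaker condition on $H$, so you should relax your hypothesis accordingly; but note that even this weaker statement is what forces the case splits and the extra hypotheses in Theorem~\ref{mainthm}. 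Outside those ranges the paper does not know how to close the argument, and your proposal does not close it either.
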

This conjecture has previously been proved true in the case of $\sigma=0$ in \cite{inequalitypaper}. It is supported by numerical evidence and analytic arguments made in Section~\ref{fundtonesec} and our weaker result.

\section{ Main Result}
In this paper we will prove the following result:
\begin{thm}\label{mainthm} Supposed we have $\tau>0$ and $\sigma\in(-1/(d-1),1)$ fixed so that the fundamental mode of the ball $\Ostar$ has simple angular dependence. Suppose also that one of the following hold:
\begin{itemize}
 \item $d=2$ and $\sigma>-51/97$ or $\tau\geq 3(1-\sigma)/(1+\sigma)$,
 \item $d=3$,
 \item $d\geq 4$ and $\sigma\leq0$ or $\tau\geq (d+2)/2$.
\end{itemize}
Then
\begin{equation}\label{maineq}
\omega_1(\Omega)\leq\omega_1(\Ostar),                                                     
\end{equation}
with equality if and only if $\Omega=\Ostar$.
\end{thm}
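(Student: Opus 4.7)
The plan is to adapt Weinberger's trial function argument, as done for the $\sigma = 0$ case in \cite{inequalitypaper}. Under the simple-angular-dependence hypothesis, Section~\ref{fundtonesec} identifies the fundamental mode on the unit ball as $\rho(r)\,x_i/r$ for a specific radial profile $\rho$ (a linear combination of ordinary and modified ultraspherical Bessel functions determined by \eqref{BC1}--\eqref{BC2}), with $d$ linearly independent copies corresponding to the coordinate directions. After rescaling so that $\Ostar$ is the unit ball, I extend $\rho$ past $r = 1$ (for instance as a constant continuation) to preserve $H^2$ regularity and take the $d$ functions $v_i(x) := \rho(|x|)\,x_i/|x|$ as candidate trial functions on $\Omega$.

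To use each $v_i$ in the Rayleigh-Ritz characterization of $\omega_1(\Omega)$, it must be orthogonal to the constants. A standard Brouwer fixed-point argument applied to the map $y\mapsto\bigl(\int_\Omega v_i(x-y)\,dx\bigr)_{i=1}^d$ yields a translate of $\Omega$ on which all $d$ integrals vanish simultaneously. Summing the Rayleigh quotient numerators over $i$ and performing the angular integrations (using orthogonality of degree-one spherical harmonics) collapses them to $\int_\Omega G(|x|)\,dx$, and the denominator to $\int_\Omega H(|x|)\,dx$, for explicit radial functions $G,H$ depending on $\rho,\rho',\rho''$ and the parameters $\tau,\sigma$. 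Hence
\[
\omega_1(\Omega) \;\leq\; \min_i Q_\Omega[v_i] \;\leq\; \frac{\int_\Omega G(|x|)\,dx}{\int_\Omega H(|x|)\,dx},
\]
and it suffices to prove $\int_\Omega \bigl(G(|x|) - \omega_1(\Ostar)\,H(|x|)\bigr)\,dx \leq 0$.

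Setting $F(r) := G(r) - \omega_1(\Ostar)H(r)$, the identity $\int_{\Ostar} F(|x|)\,dx = 0$ is automatic from the Rayleigh quotient for the fundamental mode on the ball. If I can show $F$ is non-increasing on $(0,\infty)$, then by the standard rearrangement inequality for radial non-increasing functions, the integral $\int_\Omega F(|x|)\,dx$ over any measurable $\Omega$ with $|\Omega| = |\Ostar|$ is maximized by $\Omega = \Ostar$, yielding $\int_\Omega F \leq \int_{\Ostar} F = 0$ with equality precisely when $\Omega = \Ostar$.

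The main obstacle is verifying the monotonicity of $F$. For $r \leq 1$, the eigenvalue equation \eqref{mainineq} and boundary conditions \eqref{BC1}--\eqref{BC2} satisfied by $\rho(r)x_i/r$ on the ball reduce the claim to algebraic inequalities in $\rho$ and its derivatives. For $r > 1$, where the extension of $\rho$ is artificial, $F'$ must be shown nonpositive by direct computation; this is where $\sigma$ introduces genuine new difficulty, since the terms $(1-\sigma)|D^2v_i|^2$ and $\sigma(\Delta v_i)^2$ couple in ways that collapsed trivially when $\sigma = 0$ in \cite{inequalitypaper}. The listed restrictions on $(d,\sigma,\tau)$ are precisely the regimes in which the resulting inequalities admit a direct case-by-case verification using the sharp ratio estimates on ultraspherical Bessel functions from Section~\ref{Besselsec}.
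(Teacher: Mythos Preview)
Your overall strategy matches the paper's, but there is a genuine error in the extension step: a \emph{constant} continuation of $\rho$ past $r=1$ does \emph{not} preserve $H^2$ regularity. Since $\rho'(1)=R'(1)>0$ (this is essentially Lemma~\ref{mondenom}), a constant extension makes $\rho'$ jump at $r=1$, so $\rho''$ picks up a Dirac mass and the trial functions $v_i$ fail to lie in $H^2(\Omega)$. The paper instead extends $\rho$ \emph{linearly}, $\rho(r)=R(1)+(r-1)R'(1)$ for $r\ge 1$, which gives a $C^1$ match and hence $H^2$ trial functions; this choice also makes $\rho''\equiv 0$ on $(1,\infty)$, so the $(\rho'')^2$ term is trivially partially monotone for the unit ball.

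There is also a structural difference worth noting. You aim to show that the single function $F=G-\omega_1(\Ostar)H$ is non-increasing on $(0,\infty)$. The paper does not do this: it treats numerator and denominator separately, proving that $\rho^2$ is strictly increasing (Lemma~\ref{mondenom}) and that $N[\rho]$ satisfies only the weaker ``partial monotonicity'' condition~\eqref{moncond} for the unit ball (Lemma~\ref{monnum} and Proposition~\ref{auxeticparmon}), then combines these via the rearrangement Lemma~\ref{monint}. More importantly, you have the location of the difficulty backwards: with the correct linear extension, the region $r>1$ is easy (all second derivatives of $\rho$ vanish), while essentially all of the technical work---Lemmas~\ref{gdec}, \ref{hdec}, \ref{smalltd4}, \ref{smallt23}---goes into the delicate Bessel-function estimates needed to establish monotonicity on $[0,1]$. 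The restrictions on $(d,\sigma,\tau)$ in the theorem arise precisely from these interior estimates, not from the behavior of the extension.
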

The restrictions $\sigma>-51/97$ for $d=2$ and the lower bounds on $\tau$ are specific to our method of proof and do not seem to be inherent to the problem. Furthermore, numerical and analytic evidence suggest that the fundamental mode of the ball $\Ostar$ has simple angular dependence for choices of dimension $d\geq 2$, Poisson's ratio $\sigma\in(-1/(d-1),1)$ and tension $\tau>0$. We will argue this more thoroughly in Section~\ref{fundtonesec}.

The proof of Theorem~\ref{mainthm} is a trial function argument like that of Weinberger \cite{W56} and our own proof of the the $\sigma=0$ case in \cite{inequalitypaper}. It proceeds from a sequence of lemmas, organized into the following sections:
\begin{itemize}
  \item Section~\ref{trialfcnsec} Define the trial functions and prove crucial properties about concavity of the radial part.
  \item Section~\ref{monotonesec} Prove partial monotonicity of the Rayleigh quotient, treating the cases of positive and negative $\sigma$ separately.
  \item Section~\ref{proofsec} Complete the proof using the partial monotonicity and rescaling and rearrangement arguments.
\end{itemize}

\section{ The existence of the spectrum}\label{specsec}
The weak eigenvalue problem is given by the sesquilinear form
\begin{align*}
a(u,v) &=\int_\Omega(1-\sigma)\sum_{i,j=1}^d\overline{u_{x_ix_j}}v_{x_ix_j}+\sigma(\overline{\Delta u}\Delta v)+\tau(\overline{D u}\cdot D v)\,dx
\end{align*}
with form domain $H^2(\Omega)$. Note the plate Rayleigh quotient $Q$ can be written in terms of $a(\cdot,\cdot)$, with $Q[u]=a(u,u)/\|u\|_{L^2}^2$.

\begin{prop} \label{spect} Fix $\tau\in\RR$ and $\sigma\in\left(-\frac{1}{d-1},1\right)$. Then the spectrum of the operator $\Delta^2-\tau\Delta$ associated with the form $a(\cdot,\cdot)$ consists entirely of isolated eigenvalues of finite multiplicity
\begin{equation}\label{eigenvalueineq}
 \omega_1\leq\omega_2\leq \dots\leq\omega_n\rightarrow\infty\quad\text{as}\quad n\rightarrow\infty.
\end{equation}
Furthermore, there exists an associated set of weak eigenfunctions which is an orthonormal basis for $L^2(\Omega)$.
\end{prop}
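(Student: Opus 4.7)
The plan is to reduce this to the standard spectral theorem for compact self-adjoint operators by showing that a suitable additive shift of the sesquilinear form $a(\cdot,\cdot)$ is bounded and coercive on $H^2(\Omega)$. Once that is in place, the resolvent of the associated operator is compact and self-adjoint on $L^2(\Omega)$, which yields both the discrete spectrum \eqref{eigenvalueineq} and the orthonormal basis of eigenfunctions.

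The key step is coercivity, which is precisely where the constraint $\sigma\in(-1/(d-1),1)$ enters. For the Hessian-squared term I would use the pointwise Cauchy--Schwarz inequality $(\Delta u)^2\leq d\,|D^2u|^2$, obtained from $\Delta u=\sum_i u_{x_ix_i}$. When $\sigma\geq 0$ the integrand $(1-\sigma)|D^2u|^2+\sigma(\Delta u)^2$ is bounded below pointwise by $(1-\sigma)|D^2u|^2$, while when $\sigma<0$ the inequality above (applied with the negative factor $\sigma$ flipping the direction) gives the lower bound $(1+\sigma(d-1))|D^2u|^2$. Both coefficients are strictly positive under the hypothesis on $\sigma$, so there is $c_1>0$ with
\[
\int_\Omega (1-\sigma)|D^2u|^2+\sigma(\Delta u)^2\,dx \;\geq\; c_1\|D^2u\|_{L^2}^2.
\]
To control the gradient term (needed only when $\tau<0$), I would invoke the standard interpolation bound $\|Du\|_{L^2}^2\leq \varepsilon\|D^2u\|_{L^2}^2+C_\varepsilon\|u\|_{L^2}^2$, valid on smoothly bounded $\Omega$, with $\varepsilon$ small enough to absorb $|\tau|\|Du\|_{L^2}^2$ into the Hessian term. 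Combined with the elementary bound $\|u\|_{H^2}^2\leq C(\|D^2u\|_{L^2}^2+\|u\|_{L^2}^2)$, this produces constants $c_2,K>0$ with
\[
a(u,u)+K\|u\|_{L^2}^2 \;\geq\; c_2\|u\|_{H^2(\Omega)}^2
\]
for every $u\in H^2(\Omega)$, i.e.\ the shifted form is coercive. Boundedness of $a$ on $H^2(\Omega)$ is immediate from Cauchy--Schwarz.

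With coercivity in hand, Lax--Milgram produces a bounded solution operator $T\colon L^2(\Omega)\to H^2(\Omega)$ for the shifted problem. Composing with the Rellich--Kondrachov compact embedding $H^2(\Omega)\hookrightarrow L^2(\Omega)$, which holds because $\Omega$ is bounded and smooth, makes $T\colon L^2(\Omega)\to L^2(\Omega)$ compact. Symmetry of the form gives self-adjointness of $T$, and coercivity gives positivity and injectivity. The spectral theorem for compact self-adjoint operators then supplies an $L^2$-orthonormal basis of eigenfunctions of $T$ with eigenvalues $\mu_n\downarrow 0$, which pull back to eigenvalues $\omega_n=1/\mu_n-K$ of $\Delta^2-\tau\Delta$ satisfying $\omega_n\to\infty$.

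The main obstacle is the coercivity step in the regime $\sigma<0$: the condition $\sigma>-1/(d-1)$ is sharp for this argument, since the pointwise lower bound $(1+\sigma(d-1))|D^2u|^2$ degenerates at the endpoint. Everything downstream is standard Hilbert-space spectral theory and requires no input specific to the plate problem.
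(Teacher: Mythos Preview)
Your proposal is correct and follows essentially the same approach as the paper: both split on the sign of $\sigma$ and use the pointwise Cauchy--Schwarz bound $(\Delta u)^2\le d\,|D^2u|^2$ to extract a positive multiple of $\|D^2u\|_{L^2}^2$ from the second-order part, then feed this into standard coercivity-plus-compact-embedding spectral theory. The only cosmetic difference is that the paper finishes the coercivity step by reducing to the already-established $\sigma=0$ free plate form from \cite{inequalitypaper}, whereas you handle the gradient term directly via the interpolation inequality $\|Du\|_{L^2}^2\le\varepsilon\|D^2u\|_{L^2}^2+C_\varepsilon\|u\|_{L^2}^2$; both routes are standard and your version has the mild advantage of being self-contained.
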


Because the quadratic form involves a convex combination of second-order terms $|D^2u|^2$ and $|\Delta u|^2$, we will find the following inequality useful in proving Proposition~\ref{spect}: 
\begin{fact}\label{laphesbound} For any function $u\in H^2(\Omega)$, we have the sharp bound
$(\Delta u)^2\leq d|D^2u|^2$.
\end{fact}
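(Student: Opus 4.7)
The plan is to observe that this is a purely pointwise, algebraic inequality on the entries of the symmetric matrix $D^2 u$ and follows from a single application of Cauchy--Schwarz. First I would write the Laplacian as the trace of the Hessian, $\Delta u = \sum_{i=1}^d u_{x_i x_i}$, and view this sum as the Euclidean inner product in $\RR^d$ of the diagonal vector $(u_{x_1 x_1}, \ldots, u_{x_d x_d})$ with the all-ones vector $(1, \ldots, 1)$. The Cauchy--Schwarz inequality then gives
\[
(\Delta u)^2 = \left(\sum_{i=1}^d u_{x_i x_i}\right)^2 \leq d \sum_{i=1}^d u_{x_i x_i}^2.
\]

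Next I would bound the right-hand side by the full Hessian norm. Since $|D^2u|^2 = \sum_{i,j=1}^d u_{x_i x_j}^2$ contains all the diagonal squared terms together with nonnegative off-diagonal contributions, we immediately have $\sum_i u_{x_i x_i}^2 \leq |D^2 u|^2$, and chaining the two estimates yields $(\Delta u)^2 \leq d|D^2 u|^2$ as required.

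For sharpness, I would exhibit an explicit extremizer. Taking $u(x) = |x|^2/2$, the Hessian is the identity, so $|D^2 u|^2 = d$ while $(\Delta u)^2 = d^2 = d \cdot |D^2 u|^2$, showing the constant $d$ cannot be improved. This is also consistent with the Cauchy--Schwarz equality condition: equality forces the diagonal entries $u_{x_i x_i}$ to all be equal and the off-diagonal entries $u_{x_i x_j}$ ($i \neq j$) to vanish, i.e.\ $D^2 u$ must be a scalar multiple of the identity.

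There is no real obstacle. The inequality requires no use of the domain $\Omega$, the boundary conditions, or indeed any global property of $u$ beyond the existence of weak second derivatives, so integrating (or evaluating) pointwise is sufficient; no regularity or integration by parts argument is needed.
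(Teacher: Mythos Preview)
Your proposal is correct and follows essentially the same route as the paper: apply Cauchy--Schwarz to the trace $\sum_i u_{x_ix_i}$, then bound the diagonal sum by the full $|D^2u|^2$, and exhibit $u=|x|^2$ (or your $|x|^2/2$) for sharpness. Your added remarks on the equality case and the pointwise nature of the bound are accurate but not needed for the paper's purposes.
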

\begin{proof}
By applying Cauchy-Schwartz, we see
\begin{align*} 
 \left(\sum_{i=1}^d u_{x_ix_i}\right)^2&\leq \left(\sqrt{d}\left(\sum_{i=1}^d u_{x_ix_i}^2\right)^{1/2}\right)^2=d\sum_{i=1}^du_{x_ix_i}^2 \leq d\sum_{i,j=1}^du_{x_ix_j}^2.
\end{align*}
 Sharpness follows from taking $u=x_1^2+x_2^2+\dots+x_d^2$.
\end{proof}

We are now ready to prove the existence of the spectrum of our Rayleigh quotient.

\begin{proof}(Proposition~\ref{spect}) We will prove that the quadratic form $a(\cdot,\cdot)$ is bounded and coercive; that is, we will show the existence of positive constants $c_1$ and $c_2$ such that
\[
a(u,u)+c_1\|u\|^2\geq c_2\|u\|^2_{H^2(\Omega)}.
\]
Once we have this, then by a standard result (see e.g., Corollary 7.7 \cite[p. 88]{SH77}), the form $a(\cdot,\cdot)$ has a set of weak eigenfunctions which is an orthonormal basis for $L^2(\Omega)$, and the corresponding eigenvalues are of finite multiplicity and satisfy \eqref{eigenvalueineq}.

To prove boundedness of the form when $\sigma\geq0$, notice that by Fact~\ref{laphesbound}, we have $(\Delta u)^2\leq d|D^2u|^2$; thus 
\[
 a(u,u)\leq \int_\Omega (1-\sigma+d\sigma)|D^2u|^2+\tau|Du|^2\,dx,
 \]
and so $a(u,u)\leq \max(1+(d-1)\sigma,\tau)\|u\|_{H^2(\Omega)}^2$. That is, $a(\cdot,\cdot)$ is bounded.

When $\sigma<0$, we note that $\sigma(\Delta u)^2\leq 0$ and so 
\[
 a(u,u)\leq \int_\Omega (1-\sigma)|D^2u|^2+\tau|Du|^2\,dx\leq \max(1-\sigma,\tau)\|u\|_{H^2(\Omega)}^2.
 \]

To establish coercivity, it is enough to show our form $a(\cdot,\cdot)$ is bounded below by a coercive quadratic form, in our case by a positive constant multiple of the quadratic form for the free plate when $\sigma=0$. This form was proved to be coercive for all $\tau$ in \cite[Prop. 2]{inequalitypaper}.

For $0\leq\sigma<1$, note that
\begin{align*}
 a(u,u) &\geq (1-\sigma)\|D^2u\|^2+\tau\|Du\|^2\\
 &=(1-\sigma)\left(\|D^2u\|^2+\frac{\tau}{1-\sigma}\|Du\|^2\right).
\end{align*}
The lower bound is $(1-\sigma)$ times the quadratic form associated with the free plate with zero Poisson's ratio and positive tension $\tau/(1-\sigma)$. Since we assumed $\sigma<1$, this establishes coercivity of $a(\cdot,\cdot)$ for $\sigma\in[0,1)$.

When instead we have a negative Poisson's ration, in particular\\ $0>\sigma>-1/(d-1)$, we use Fact~\ref{laphesbound} to obtain:
\begin{align*}
 a(u,u)&\geq (1-\sigma)\|D^2u\|^2+d\sigma\|D^2u\|^2+\tau\|Du\|^2 \\
&\geq (1+(d-1)\sigma)\|D^2u\|^2+\tau\|Du\|^2.
\end{align*}
Again, this is a constant multiple of the quadratic form associated with a free plate under tension and with zero Poisson's ratio. Because we assumed $\sigma>-1/(d-1)$, this constant is positive and hence the form $a(\cdot,\cdot)$ is coercive.
\end{proof}


\begin{prop} \label{regprop} For any $\tau \in\RR$ and smoothly bounded $\Omega$, the weak eigenfunctions associated with our form $a(\cdot,\cdot)$ are real-valued and smooth on $\overline{\Omega}$. 
\end{prop}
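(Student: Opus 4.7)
The plan is to establish the two claims separately. For real-valuedness, I would use the fact that the sesquilinear form $a(\cdot,\cdot)$ has real-valued coefficients. By self-adjointness (which follows from the symmetry of $a$ and was implicitly used in Proposition~\ref{spect}), every eigenvalue $\omega$ is real, and so $u$ solving $a(u,v)=\omega\int_\Omega u \bar v$ for all $v \in H^2(\Omega)$ implies $\bar u$ solves the same equation. Consequently $\operatorname{Re}u$ and $\operatorname{Im}u$ both lie in the $\omega$-eigenspace, so we may choose an orthonormal basis of each eigenspace consisting of real-valued functions.

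For smoothness, the plan is a standard elliptic regularity bootstrap for the fourth-order operator $L:=\Delta^2-\tau\Delta$, treating interior and boundary regularity separately. Since $u\in H^2(\Omega)$ is a weak solution of $Lu=\omega u\in L^2(\Omega)$ and the principal symbol of $L$ is $|\xi|^4$, so that $L$ is uniformly elliptic of order four, classical interior elliptic regularity (e.g.\ via mollification or difference-quotient arguments) yields $u \in H^4_{\text{loc}}(\Omega)$. Bootstrapping — since $\omega u \in H^{4}_{\text{loc}}$, we get $u \in H^{8}_{\text{loc}}$, and so on — combined with Sobolev embedding gives $u\in C^\infty(\Omega)$.

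To push regularity to the closure $\overline\Omega$, I would use the fact that the natural boundary operators $Mu$ and $Vu$ arise from integration by parts in the coercive quadratic form $a(\cdot,\cdot)$. The pair $(L;M,V)$ forms an elliptic boundary value problem in the sense of Agmon–Douglis–Nirenberg: $L$ is elliptic of order $4$ with $M$ of order $2$ and $V$ of order $3$, and the Lopatinski–Shapiro complementing condition can be verified by freezing coefficients and checking the half-space model problem (this is a computation specific to the plate boundary conditions, paralleling what was done for $\sigma=0$ in \cite{inequalitypaper}). Once the complementing condition is in place, the Agmon–Douglis–Nirenberg estimate gives
\[
\|u\|_{H^{k+4}(\Omega)} \leq C\bigl(\|Lu\|_{H^k(\Omega)} + \|Mu\|_{H^{k+3/2}(\partial\Omega)} + \|Vu\|_{H^{k+1/2}(\partial\Omega)} + \|u\|_{L^2(\Omega)}\bigr)
\]
for every $k\geq 0$, provided $\partial\Omega$ is sufficiently smooth. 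Starting from $u\in H^2$ and $Lu = \omega u \in L^2$ with homogeneous boundary data, this promotes $u$ to $H^4(\Omega)$; iterating the estimate with $Lu=\omega u\in H^{4k}$ at each stage yields $u\in H^{k}(\Omega)$ for every $k$, hence $u\in C^\infty(\overline\Omega)$ by Sobolev embedding.

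The main obstacle is verification of the Lopatinski–Shapiro condition for the particular boundary operators $(Mu, Vu)$, since these are non-standard and depend on $\sigma$ and $\tau$. However, because the natural boundary conditions arise from a coercive variational form (Proposition~\ref{spect}), the complementing condition is essentially automatic — concretely, one checks that in the half-space model, the only bounded solution of the ODE $(\partial_t^4 + \cdots)v = 0$ with $Mv=Vv=0$ at $t=0$ is the trivial one. This is a finite computation on the characteristic polynomial. Everything else is routine elliptic bootstrap.
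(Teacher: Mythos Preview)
Your argument is correct, and the real-valuedness part matches the paper exactly. For smoothness, however, the paper takes a much shorter route than you do: rather than setting up the Agmon--Douglis--Nirenberg framework and verifying the Lopatinski--Shapiro condition for the pair $(M,V)$, the paper simply invokes a theorem of Nirenberg \cite[p.~668]{Nir55} to conclude directly that $u\in H^k(\Omega)$ for every $k$, hence $u\in C^\infty(\Omega)$, and then appeals to the Trace Theorem (citing \cite[Prop.~4.3 and~4.5]{taylor}) to push regularity to $\overline{\Omega}$. So the paper treats boundary regularity as a black-box citation, whereas you unpack the mechanism behind it. Your approach has the advantage of being more self-contained and making explicit what actually needs to be checked (the complementing condition), at the cost of a longer argument and a computation that the paper avoids entirely.
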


\begin{proof} 
Let $u$ be a weak eigenfunction of $a(\cdot,\cdot)$ with associated eigenvalue $\omega$; by Proposition~\ref{spect} we have $u\in H^2(\Omega)$. Then by a theorem in \cite[p 668]{Nir55}, we have $u\in H^k(\Omega)$ for every positive integer $k$. Thus we have $u\in H^k(\Omega)$ for all $k\in \mathbb{Z}^+$, and so $u\in C^\infty(\Omega)$.

Regularity on the boundary follows from global interior regularity and the Trace Theorem (see, for example, \cite[Prop 4.3, p. 286 and Prop 4.5, p. 287.]{taylor}). Thus we have $u\in C^\infty(\overline{\Omega})$, as desired.

Because our eigenvalues are all real-valued, if $u$ is an eigenfunction with associated eigenvalue $\omega$, we may take the complex conjugate of the eigenvalue equation and see that $\overline{u}$ is also a eigenfunction with eigenvalue $\omega$. Then the real and imaginary parts of $u$ are also eigenfunctions, and we may choose real-valued eigenfunctions for our eigenbasis.\end{proof}

\begin{rmk} It may be possible that the form $a(\cdot,\cdot)$ is coercive for values of $\sigma$ outside the given range if we impose restrictions on $\tau$, such as requiring $\tau>0$. However, note that in the case $\tau=0$ and $\sigma=1$, the form is \emph{not} coercive. In this case all $H^2(\Omega)$ harmonic functions are eigenfunctions with eigenvalue zero, and so we have an eigenvalue of infinite multiplicity. 

Furthermore, the lower bound on $\sigma$ arises from applying a sharp inequality bounding the Laplacian by the Hessian. This suggests coercivity might fail for $\sigma\leq-1/(d-1)$.
\end{rmk}

\section{ The fundamental tone as a function of $\tau$ and $\sigma$}\label{functausec}
The Rayleigh quotient depends on both $\tau$ and $\sigma$, so we can view the fundamental tone $\omega_1$ as a function in either parameter. Because $\omega_1$ is found by taking the infimum of Rayleigh quotients, and the quotients are linear in each of $\tau$ and $\sigma$, the fundamental tone is concave in each parameter. Additionally, by nonnegativity of $|Du|^2$, we have that the quotient and hence $\omega_1$ are increasing in $\tau$.

Fix $\sigma\in(-1/(d-1),1)$ and view $\omega_1$ as a function of $\tau$. Then we can prove the same linear bounds on $\omega_1(\tau)$ that were established for the $\sigma=0$ case in \cite{inequalitypaper}.

\begin{lemma}\label{wbounds} For all $\sigma\in(-1/(d-1),1)$ and $\tau>0$, we have
 \begin{equation}\label{lem31}
  \tau\mu_1 \leq\omega_1(\tau,\sigma)\leq \tau \frac{|\Omega|d}{\int_\Omega |x-\bar{x}|^2 \, dx},
 \end{equation}
where $\bar{x}=\int_\Omega x\,dx/|\Omega|$ is the center of mass of $\Omega$. In particular, when $\Omega$ is the unit ball, we have
 \begin{equation} 
  \tau\mu_1 \leq \omega_1(\tau,\sigma) \leq \tau(d+2). \label{omegabounds}
 \end{equation}
 Furthermore, the upper bounds in \eqref{lem31} and \eqref{omegabounds} hold for all $\tau\in\RR$.
\end{lemma}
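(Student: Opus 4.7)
The plan is to establish the two inequalities of \eqref{lem31} separately, both through the variational characterization $\omega_1(\Omega)=\inf\{Q_\Omega[u]:u\in H^2(\Omega),\,\int_\Omega u\,dx=0\}$, and then specialize to the ball to obtain \eqref{omegabounds}.

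For the lower bound, I would first observe that within the coercivity range $\sigma\in(-1/(d-1),1)$, the second-order integrand $(1-\sigma)|D^2u|^2+\sigma(\Delta u)^2$ is pointwise nonnegative: for $\sigma\ge 0$ this is immediate, and for $\sigma<0$ it follows by writing $\sigma(\Delta u)^2\ge d\sigma|D^2u|^2$ (via Fact~\ref{laphesbound}, since $\sigma<0$), exactly the step used in the coercivity argument for Proposition~\ref{spect}. With $\tau>0$ this yields $a(u,u)\ge \tau\int_\Omega|Du|^2\,dx$, so dividing by $\|u\|_{L^2}^2$ and taking the infimum over mean-zero $u\in H^2(\Omega)$ gives $\omega_1(\tau,\sigma)\ge \tau\mu_1(\Omega)$, because $\mu_1$ is the infimum of the same Dirichlet-energy quotient taken over the larger trial space $H^1(\Omega)\cap\{\int u=0\}$.

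For the upper bound I would take as trial functions the $d$ centered linear coordinates $v_i(x)=x_i-\bar{x}_i$. Each $v_i$ is smooth, hence in $H^2(\Omega)$, and has mean zero by construction. Since $D^2v_i\equiv 0$, $\Delta v_i\equiv 0$, and $|Dv_i|^2\equiv 1$, every cross term in \eqref{RQ} vanishes and
\[
Q_\Omega[v_i]=\frac{\tau|\Omega|}{\int_\Omega(x_i-\bar{x}_i)^2\,dx}.
\]
The Rayleigh--Ritz characterization gives $\omega_1\le Q_\Omega[v_i]$ for each $i$, so I pick the $i$ maximizing the denominator. The elementary averaging inequality $\max_i\int_\Omega(x_i-\bar{x}_i)^2\,dx\ge \frac{1}{d}\int_\Omega|x-\bar{x}|^2\,dx$ then yields the stated upper bound. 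Because this trial-function computation never used the sign of $\tau$, the upper bound persists for all $\tau\in\RR$.

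The ball case \eqref{omegabounds} is a direct specialization: for $\Omega=\BB$ the center of mass is the origin, and $\int_\BB|x|^2\,dx=d|\BB|/(d+2)$ by a standard polar-coordinate calculation, so $|\BB|\,d\big/\int_\BB|x|^2\,dx=d+2$. There is no real obstacle here --- the only subtlety is verifying pointwise nonnegativity of the second-order integrand when $\sigma<0$, and that verification is already embedded in the coercivity analysis from the proof of Proposition~\ref{spect}.
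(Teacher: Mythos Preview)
Your proposal is correct and follows essentially the same approach as the paper: linear coordinate trial functions for the upper bound and pointwise nonnegativity of the second-order integrand (via Fact~\ref{laphesbound} when $\sigma<0$) to reduce to the membrane quotient for the lower bound. The only cosmetic difference is that you pick the coordinate with largest $\int_\Omega(x_i-\bar{x}_i)^2\,dx$ and invoke the averaging inequality, whereas the paper clears denominators and sums the $d$ inequalities $\omega_1\int_\Omega(x_k-\bar{x}_k)^2\,dx\le\tau|\Omega|$; both manipulations yield the same bound.
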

\begin{proof} 
To prove the upper bound, our argument is virtually the same as that of \cite[Lemma 8]{inequalitypaper}: we use as trial functions the linear functions $u_k=x_k-\overline{x_k}$. 

\if false
For the upper bound, we consider the linear trial functions $u_k=x_k-\overline{x_k}$; then $|Du_k|=1$ and the second-order derivatives in our Rayleigh quotient vanish. We are left with 
\[
 \omega_1(\tau,\sigma)\leq Q[u_k]=\frac{\tau|\Omega|}{\int_\Omega (x_k-\overline{x_k})^2\,dx}.
\]
Clearing the denominator and summing over all $k=1,\dots,d$, we see
\[
 \omega_1(\tau,\sigma)\leq\int_\Omega|x-\overline{x}|^2\,dx\leq d\tau|\Omega|.
\]
This gives us the desired upper bound for general $\Omega$. When $\Omega$ is the unit ball, then the integral $\int_\Omega|x-\overline{x}|^2\,dx$ evaluates to $d|\Omega|/(d+2)$.
\fi

To establish the lower bound, we note that for $\sigma\geq0$, both $(1-\sigma)|D^2u|^2$ and $\sigma(\Delta u)^2$ are nonnegative and so
\[
 Q[u]\geq \frac{\int_\Omega \tau|Du|\,dx}{\int_\Omega u^2\,dx}.
\]
If $\sigma<0$, then we apply Fact~\ref{laphesbound}, and since $\sigma>-1/(d-1)$, we see
\[
 Q[u]\geq\frac{\int_\Omega (1+\sigma(d-1))|D^2u|^2+\tau|du|^2\,dx}{\int_\Omega u^2\,dx}\geq\frac{\int_\Omega \tau|Du|\,dx}{\int_\Omega u^2\,dx}.
\]
In both cases, we've bounded $Q[u]$ below by $\tau$ times the free membrane Rayleigh quotient. The lower bound on $\omega_1$ is then obtained by taking the infimum of both sides over all $u\in H^2(\Omega)$ orthogonal to a constant.
\end{proof}

\begin{lemma} \label{wbounds2} For all $\tau\in\RR$,
\[
\omega_1 \leq C(\Omega)+\tau\mu_1,
\]
where the value
\[
 C(\Omega)=\frac{\int_\Omega(1-\sigma)|D^2v|^2+\sigma(\Delta v)^2\,dx}{\int_\Omega v^2\,dx}
\]
is given explicitly in terms of the fundamental mode $v$ of the free membrane on $\Omega$.
\end{lemma}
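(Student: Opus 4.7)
The plan is to use the fundamental mode $v$ of the free (Neumann) membrane on $\Omega$ directly as a trial function in the plate Rayleigh quotient. By definition, $v$ is an eigenfunction of the Neumann Laplacian associated with the first nonzero eigenvalue $\mu_1$, so it is automatically orthogonal to constants (being orthogonal to the $\mu=0$ eigenspace of constants). Since $\Omega$ is smoothly bounded, standard elliptic regularity for the Neumann Laplacian places $v\in C^\infty(\overline{\Omega})\subset H^2(\Omega)$, so $v$ is admissible in the Rayleigh--Ritz characterization of $\omega_1$.

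The computation then separates cleanly. I would write
\[
Q_\Omega[v] \;=\; \frac{\int_\Omega (1-\sigma)|D^2v|^2+\sigma(\Delta v)^2\,dx}{\int_\Omega v^2\,dx} \;+\; \tau\,\frac{\int_\Omega |Dv|^2\,dx}{\int_\Omega v^2\,dx}.
\]
The first summand is exactly $C(\Omega)$ by definition. For the second summand, since $v$ realizes the infimum in the free-membrane Rayleigh--Ritz characterization of $\mu_1$, we have $\int_\Omega|Dv|^2\,dx = \mu_1\int_\Omega v^2\,dx$, so the second summand equals $\tau\mu_1$.

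Combining gives $Q_\Omega[v] = C(\Omega) + \tau\mu_1$, and invoking the variational characterization $\omega_1 = \inf Q_\Omega[u]$ (over admissible $u$) yields $\omega_1 \leq C(\Omega)+\tau\mu_1$ for every $\tau\in\RR$. Notice that the sign of $\tau$ plays no role in the argument, since $v$ is fixed independent of $\tau$; this is exactly why the bound extends to all real $\tau$, in contrast with the lower bound of Lemma~\ref{wbounds} where positivity of $\tau$ was used.

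There is no real obstacle here. The only point that warrants a brief remark is admissibility: one must verify that $v\in H^2(\Omega)$ and $\int_\Omega v\,dx=0$, but both are immediate from the definition of $v$ as the first nontrivial Neumann eigenfunction on a smoothly bounded domain. The lemma is essentially a one-line trial function calculation once $v$ is chosen, and it has the useful feature of capturing the precise linear behavior in $\tau$ with slope $\mu_1$, matching the slope of the lower bound in Lemma~\ref{wbounds}.
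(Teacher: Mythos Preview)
Your proposal is correct and follows essentially the same approach as the paper, which omits the proof and refers to \cite[Lemma 9]{inequalitypaper}: take the free membrane fundamental mode $v$ as a trial function in the plate Rayleigh quotient, splitting $Q_\Omega[v]$ into the second-order part $C(\Omega)$ and the gradient part $\tau\mu_1$. Your remarks on admissibility ($v\in H^2(\Omega)$ and $\int_\Omega v\,dx=0$) and on why the bound holds for all $\tau\in\RR$ are exactly the points that need checking.
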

The proof is essentially the same as in \cite[Lemma 9]{inequalitypaper} (the $\sigma=0$ case) and so is omitted.

These two lemmas give us the limiting behavior of $\omega_1$ as $\tau\to\infty$:
\begin{cor}\label{limwt} For all values of our Poisson's ratio $-1/(d-1)<\sigma<1$, we have
\[
 \frac{\omega_1(\tau,\sigma)}{\tau}\to\mu_1\qquad\text{as $\tau\to\infty$}.
\]
\end{cor}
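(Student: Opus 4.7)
The plan is to derive this limit by sandwiching $\omega_1(\tau,\sigma)/\tau$ between two bounds coming directly from Lemmas~\ref{wbounds} and~\ref{wbounds2}, then letting $\tau\to\infty$.

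First, I would use the lower bound from Lemma~\ref{wbounds}: for $\tau>0$ and any admissible $\sigma$,
\[
\tau\mu_1 \leq \omega_1(\tau,\sigma),
\]
which upon dividing by $\tau>0$ gives
\[
\mu_1 \leq \frac{\omega_1(\tau,\sigma)}{\tau}.
\]
Next, I would invoke Lemma~\ref{wbounds2}, which gives the upper bound $\omega_1(\tau,\sigma) \leq C(\Omega) + \tau\mu_1$ for all $\tau\in\RR$. Dividing by $\tau$ (taking $\tau>0$ large enough) yields
\[
\frac{\omega_1(\tau,\sigma)}{\tau} \leq \mu_1 + \frac{C(\Omega)}{\tau}.
\]

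Combining these two inequalities produces the squeeze
\[
\mu_1 \leq \frac{\omega_1(\tau,\sigma)}{\tau} \leq \mu_1 + \frac{C(\Omega)}{\tau}.
\]
Since $C(\Omega)$ is a fixed constant depending only on $\Omega$ and $\sigma$ (it is the free-membrane trial quotient evaluated on the fundamental membrane mode $v$) and is independent of $\tau$, the right-hand side tends to $\mu_1$ as $\tau\to\infty$. By the squeeze theorem, $\omega_1(\tau,\sigma)/\tau \to \mu_1$.

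There is no real obstacle here — the two lemmas were designed so that their bounds have matching slope $\mu_1$ in $\tau$, making this limit immediate. The only thing to note is that both bounds apply uniformly in $\sigma\in(-1/(d-1),1)$ (Lemma~\ref{wbounds} is stated for all such $\sigma$, and Lemma~\ref{wbounds2} holds for all $\tau\in\RR$ including the asymptotic regime), so the conclusion indeed holds for every admissible Poisson's ratio as claimed.
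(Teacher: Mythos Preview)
Your proposal is correct and follows exactly the same approach as the paper: sandwich $\omega_1(\tau,\sigma)/\tau$ between $\mu_1$ and $\mu_1 + C(\Omega)/\tau$ using Lemmas~\ref{wbounds} and~\ref{wbounds2}, then let $\tau\to\infty$. The paper's proof is simply a terser version of what you wrote.
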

\begin{proof}
By Lemmas~\ref{wbounds} and ~\ref{wbounds2}, we have
\[
 \mu_1\leq \frac{\omega_1(\tau,\sigma)}{\tau}\leq \mu_1+\frac{C}{\tau}.
\]
Let $\tau\to\infty$.
\end{proof}
This tells us that for sufficiently large $\tau$, we expect the free plate to behave much like the free membrane. This matches the physical interpretation of $\tau$ as the reciprocal of rigidity -- large values of $\tau$ mean less rigidity.

\section{ Properties of Bessel functions}\label{Besselsec}
In this section we will define the ultraspherical Bessel functions and summarize or prove properties that we will need to prove Theorem~\ref{mainthm}. Ultraspherical Bessel functions are the generalization of spherical Bessel functions to an arbitrary dimension $d$ and can be defined in terms of the usual Bessel functions $J_\nu(z)$ and $I_\nu(z)$. For more information on Bessel functions and their properties, see, eg, \cite{AShandbook}. 

We define the $d$-dimensional ultraspherical Bessel function of the first kind of order $l$, written $j_{l}(z)$, as follows:
\[
 j_{l}(z):=z^{-(d-2)/2}J_{l+d/2-1}(z).
\]
This function solves the $d$-dimensional ultraspherical Bessel equation
\begin{equation}\label{besseleqn}
z^2 w'' + (d-1)zw' + (z^2-l(l + d-2)) w = 0.
\end{equation}
Analogously, we define the $d$-dimensional ultraspherical modified Bessel function of the first kind of order $l$, written $i_{l}(z)$, as follows:
\begin{equation}\label{modbesseleqn}
 i_{l}(z):=z^{-(d-2)/2}I_{l+d/2-1}(z).
\end{equation}
This function solves the $d$-dimensional ultraspherical modified Bessel equation
\[
z^2 w'' + (d -1)zw'- (z^2 + l(l + d-2)) w = 0.
\]
Each of the Bessel and modified Bessel equations are second-order and have a second, linearly  independent solution; these are ultraspherical Bessel functions of the second kind $N_l(z)$ and ultraspherical modified Bessel functions of the second kind $K_l(z)$. Both of these are singular at $z=0$ of different orders.

The ultraspherical Bessel functions inherit a family of recurrence relations from their two-dimensional analogues. These are proved in \cite{besselpaper}; if the reader is satisfied with a numerical demonstration, they can be verified in Mathematica or Maple for given dimension.

\begin{lemma}\cite{besselpaper}\label{Besselprops}
 We have the following properties of ultraspherical Bessel functions.
\begin{enumerate}
 \item $\frac{d-2+2l}{z}j_l(z) = j_{l-1}(z)+j_{l+1}(z)\label{j1}$
\item $j_l^\prime(z) = \frac{l}{z}j_l(z)-j_{l+1}(z)\label{j2}$
\item $j_l^\prime(z) =j_{l-1}(z)-\frac{l+d-2}{z}j_l(z)$
\item $\frac{d-2+2l}{z}i_l(z) = i_{l-1}(z)-i_{l+1}(z)\label{i1}$
\item $i_l^\prime(z) = \frac{l}{z}i_l(z)+i_{l+1}(z)\label{i2}$
\item $i_l^\prime(z) =i_{l-1}(z)-\frac{l+d-2}{z}i_l(z)$
\item $j_l^{\prime\prime}(z) = \left(\frac{l^2-l}{z^2}-1\right)j_l(z)+\frac{d-1}{z}j_{l+1}(z)\label{j4}$
\item $i_l^{\prime\prime}(z) = \left(\frac{l^2-l}{z^2}+1\right)i_l(z)-\frac{d-1}{z}i_{l+1}(z)\label{i4}$
\end{enumerate}
\end{lemma}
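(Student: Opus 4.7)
The plan is to reduce each identity to the classical three-term recurrences and derivative formulas for the standard Bessel and modified Bessel functions. Writing $\nu := l + d/2 - 1$, the defining relations $j_l(z) = z^{-(d-2)/2} J_\nu(z)$ and $i_l(z) = z^{-(d-2)/2} I_\nu(z)$ show that the shift $l \to l \pm 1$ corresponds exactly to $\nu \to \nu \pm 1$, and the prefactor $z^{-(d-2)/2}$ is common to consecutive orders — so recurrences on the $J_\nu$ side translate cleanly into recurrences on the $j_l$ side.

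For the three-term identities (1) and (4), I multiply the classical recurrences $(2\nu/z) J_\nu = J_{\nu-1} + J_{\nu+1}$ and $(2\nu/z) I_\nu = I_{\nu-1} - I_{\nu+1}$ by $z^{-(d-2)/2}$; the factor $2\nu$ becomes $2l + d - 2$, recovering the stated identities with the expected sign flip between the Bessel and modified Bessel cases. For the first-derivative identities (2), (3), (5), (6), I apply the product rule to $j_l = z^{-(d-2)/2} J_\nu$ and combine with $J_\nu' = (\nu/z) J_\nu - J_{\nu+1} = J_{\nu-1} - (\nu/z) J_\nu$, and similarly for $I_\nu$. The algebraic content is the pair of identities $\nu - (d-2)/2 = l$ and $\nu + (d-2)/2 = l + d - 2$, which convert the classical prefactor $\pm \nu/z$ into the stated factors $l/z$ and $(l+d-2)/z$.

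For the second-derivative identities (7) and (8), I substitute the first-order recurrence $j_l' = (l/z) j_l - j_{l+1}$ (respectively $i_l' = (l/z) i_l + i_{l+1}$) directly into the ultraspherical Bessel equation~(\ref{besseleqn}) and its modified counterpart. After solving for $j_l''$ or $i_l''$ and collecting terms, the coefficient of $j_l$ becomes $[l(l+d-2) - l(d-1)]/z^2 - 1 = (l^2 - l)/z^2 - 1$, and the coefficient of $j_{l+1}$ comes out to $(d-1)/z$, matching the stated identity (and with the sign of the $-1$ flipping in the modified case). An equivalent route is to differentiate (2) and (5) and then eliminate $j_{l+1}'$ and $i_{l+1}'$ using (3) and (6) with $l$ replaced by $l+1$. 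The only real obstacle is sign and index bookkeeping — especially the sign difference between the $J_\nu$ and $I_\nu$ three-term recurrences — which propagates through every formula; a short numerical spot-check in a computer algebra system provides a convenient sanity check.
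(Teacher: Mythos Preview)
Your derivation is correct: the reduction to the classical recurrences for $J_\nu$ and $I_\nu$ via $\nu=l+d/2-1$ is exactly the natural route, and your computations for items (1)--(8) check out (in particular the key simplifications $\nu-(d-2)/2=l$, $\nu+(d-2)/2=l+d-2$, and $l(l+d-2)-(d-1)l=l^2-l$ are all right). Note that the present paper does not actually prove this lemma; it simply cites \cite{besselpaper} and remarks that the identities can be checked numerically in a CAS. Your argument is the standard proof---and presumably the one given in \cite{besselpaper}---so there is no substantive divergence to discuss.
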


We will also need a bound on the roots of the $j_l'(z)$ functions:
\begin{prop}[L. Lorch and P. Szego, \cite{LS94}]\label{propLS}
Let $p_{l,k}$ denote the $k$th positive zero of $j_l^\prime(z)$. Then for $d\geq3$ and $l\geq 1$,
\[
 \frac{l(d+2l)(d+2l+2)}{d+4l+2}<\left(p_{l,1}\right)^2<l(d+2l).
\]
In particular, for $\pll$ the first zero of $j_1^\prime$, we deduce
\[
d<(\pll)^2< d+2.
\]
This inequality holds for all $d\geq2$.
\end{prop}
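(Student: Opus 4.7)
The result is due to Lorch and Szego, so my plan is to describe the Sturm-theoretic strategy underlying their proof rather than reprove it from scratch. First, I would translate the critical-point condition via Lemma~\ref{Besselprops}: by item (2), $j_l'(p)=0$ is equivalent to $p\,j_{l+1}(p)=l\,j_l(p)$, and by item (3) to $p\,j_{l-1}(p)=(l+d-2)j_l(p)$. Since $j_l(z)\sim c_l z^l$ near $z=0$ for $l\geq 1$, the function $j_l$ is positive on some initial interval ending at its first positive zero $q_{l,1}$, so $p_{l,1}$ is a local maximum of $j_l$ strictly inside that interval. A quick preliminary estimate follows from substituting $j_l'(p_{l,1})=0$ directly into~\eqref{besseleqn}, which gives $p^2 j_l''(p)=(l(l+d-2)-p^2)j_l(p)$; concavity at the maximum then yields the weaker bound $(p_{l,1})^2>l(l+d-2)$.

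To upgrade this to the stated bounds $(p_{l,1})^2<l(d+2l)$ and $(p_{l,1})^2>l(d+2l)(d+2l+2)/(d+4l+2)$, I would follow Lorch and Szego's more delicate approach. The key step is to transform the ultraspherical Bessel equation into Schr\"odinger form: setting $v(z):=z^{(d-1)/2}j_l(z)$ yields
\[
v''+\left(1-\frac{\nu^2-1/4}{z^2}\right)v=0,\qquad\nu=l+\tfrac{d}{2}-1,
\]
and Sturm comparison against suitably chosen constant-coefficient equations on subintervals controls the locations of the zeros of $v$ (equivalently of $j_l$), which in turn pins down $p_{l,1}$ through the recurrence-derived identities above together with the interlacing $p_{l,1}<q_{l,1}$. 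The main obstacle is the sharp lower bound, which strictly improves the naive concavity estimate; here Lorch and Szego's argument exploits monotonicity of the zeros of $J_\nu'$ in the index $\nu$ together with a weighted integral identity of Sturm type, and this is the technically most delicate piece that I would defer to their paper rather than redo.

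For the assertion valid in all $d\geq 2$ at $l=1$, the upper bound $(\pll)^2<d+2$ is the $l=1$ specialization of the general upper bound. The lower bound $d<(\pll)^2$ can be verified directly by writing $\pll$ as the first positive root of $zJ_{d/2-1}(z)-(d-1)J_{d/2}(z)$, obtained from $j_1(z)=z^{-(d-2)/2}J_{d/2}(z)$ together with the standard recurrence $J_{d/2}'(z)=J_{d/2-1}(z)-(d/2)z^{-1}J_{d/2}(z)$, and then checking the signs of this expression at $z^2=d$ and $z^2=d+2$ using the power series of $J_\nu$. This extends the $l=1$ case down to $d=2$, where the general Lorch-Szego hypothesis $d\geq 3$ no longer applies.
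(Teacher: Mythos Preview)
The paper does not give any proof of this proposition: it is stated as a quoted result of Lorch and Szego \cite{LS94}, and the text moves directly on to Lemma~\ref{Besselsigns} with no argument whatsoever. So there is no ``paper's own proof'' to compare against; the paper simply defers to the reference.

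Your proposal therefore goes well beyond what the paper does. The sketch you give---rewriting $j_l'(p)=0$ via the recurrences in Lemma~\ref{Besselprops}, the concavity observation from~\eqref{besseleqn} yielding the weaker bound $(p_{l,1})^2>l(l+d-2)$, and the transformation $v(z)=z^{(d-1)/2}j_l(z)$ into Schr\"odinger form for Sturm comparison---is a correct outline of the standard approach, and you are honest that the sharp lower bound is the delicate part you would leave to \cite{LS94}. The one place you add substantive content is the $d=2$ extension for $l=1$: the paper asserts ``This inequality holds for all $d\geq2$'' without justification, whereas you propose a direct sign-check of $zJ_{d/2-1}(z)-(d-1)J_{d/2}(z)$ at the endpoints $z^2=d$ and $z^2=d+2$. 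That is a reasonable way to close the gap the paper leaves open (and the computation $j_1'(z)=z^{-d/2}\bigl[zJ_{d/2-1}(z)-(d-1)J_{d/2}(z)\bigr]$ you indicate is correct).
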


We will also find the following properties of signs of Bessel functions and their derivatives to be useful:
\begin{lemma}\label{Besselsigns}\cite[Lemmas 5 through 9]{besselpaper} We have the following:
 \begin{enumerate}
  \item For $l=1,\dots,5$, we have $j_l>0$ on $(0,\pll]$.
  \item We have $j_1^\prime>0$ on $(0,\pll)$.
  \item We have $j_2^\prime>0$ on $(0,\pll]$.
  \item We have $j_1^{\prime\prime}<0$ on $(0,\pll]$.
  \item We have $j_l^{(4)}>0$ on $(0,\pll]$.
  \item The functions $j_l$ and $J_{l+d/2-1}$ have the same sign. In particular, for $l= 1,\dots,5$ and any $d\geq 2$, we have $j_l(z)>0$ for $z\leq \pll$.
 \end{enumerate}
\end{lemma}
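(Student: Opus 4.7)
The plan is to treat the six items separately, exploiting the recurrences of Lemma~\ref{Besselprops} and the bounds of Proposition~\ref{propLS} at each step. I would first dispose of the easiest claims and the one that is purely a restatement: item (6) is immediate from the definition $j_l(z)=z^{-(d-2)/2}J_{l+d/2-1}(z)$ together with positivity of $z^{-(d-2)/2}$ for $z>0$, and its ``in particular'' clause then reduces to item (1). Item (2) follows from the definition of $\pll$ as the first positive zero of $j_1'$ combined with the Frobenius expansion $j_1(z)\sim c z$ near $0$, which yields $j_1'(0^+)>0$, so the sign on $(0,\pll)$ is determined.

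For item (3), I would apply Proposition~\ref{propLS} with $l=2$ to bound the first positive zero $p_{2,1}$ of $j_2'$. A short arithmetic check,
\[
2(d+4)(d+6)-(d+2)(d+10)=d^{2}+8d+28>0,
\]
gives $p_{2,1}^{2}>d+2>\pll^{2}$, so $j_2'$ does not change sign on $(0,\pll]$; the expansion $j_2'(z)\sim 2cz$ near $0$ fixes the sign as positive. For item (1) I would split on $l$: when $l=1$, positivity of $j_1$ near $0$ combined with item (2) and uniqueness of solutions to the (regular at interior points) Bessel ODE prevents a zero in $(0,\pll]$, since a simultaneous zero of $j_1$ and $j_1'$ would force $j_1\equiv 0$. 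For $l=2,3,4,5$, Proposition~\ref{propLS} together with the standard interlacing of zeros of $j_l$ and $j_l'$ gives that the first positive zero of $j_l$ strictly exceeds $\sqrt{l(d+2l)(d+2l+2)/(d+4l+2)}$, and one verifies that this exceeds $\sqrt{d+2}>\pll$ for each such $l$ and all $d\geq 2$.

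Item (4) is the clean algebraic step. Starting from relation (2) of Lemma~\ref{Besselprops} with $l=1$, namely $j_1'=j_1/z-j_2$, I differentiate and simplify:
\[
j_1''(z)=\frac{j_1'}{z}-\frac{j_1}{z^{2}}-j_2'=\frac{j_1/z-j_2}{z}-\frac{j_1}{z^{2}}-j_2'=-\frac{j_2(z)}{z}-j_2'(z).
\]
By item (1) with $l=2$ we have $j_2>0$, and by item (3) we have $j_2'>0$, both on $(0,\pll]$, so $j_1''<0$ on that interval.

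Item (5), which I take for $l=1$ in keeping with the rest of the lemma, is where the principal difficulty lies. Using the Bessel ODE to write $j_1''=(d-1)j_1/z^{2}-j_1-(d-1)j_1'/z$ and iterating differentiation yields an explicit expression of the form
\[
j_1^{(4)}(z)=A(z)\,j_1(z)+B(z)\,j_1'(z),
\]
where $A$, $B$ are rational in $z$ with coefficients depending on $d$. At the endpoint $\pll$ the term $B(\pll)j_1'(\pll)$ vanishes, and the claim reduces to $A(\pll)>0$; combined with the bound $\pll^{2}\in(d,d+2)$ from Proposition~\ref{propLS}, this becomes an elementary polynomial inequality in $\pll^{2}$. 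For interior $z\in(0,\pll)$ one must combine positivity of $j_1,j_1'$ (items (1) and (2)) with careful splitting by the sign of each rational coefficient in $A$ and $B$. I expect this sign bookkeeping to be the main technical obstacle, since the coefficient $A(z)$ changes sign structure across the critical threshold $z^{2}\sim d-1$ and has to be controlled in tandem with the $B(z)j_1'$ term to maintain overall positivity throughout the interval.
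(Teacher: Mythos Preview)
The paper does not prove this lemma; it is stated with an explicit citation to \cite[Lemmas 5 through 9]{besselpaper} and no argument is given here. So there is nothing in the present paper to compare your proposal against line by line. That said, your outline is largely in the right spirit and items (2), (4), and (6) are handled correctly; your derivation $j_1''=-j_2/z-j_2'$ from the recurrence is clean and matches the kind of argument used in \cite{besselpaper}.

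Two genuine gaps remain. First, your use of Proposition~\ref{propLS} for items (1) (with $l\ge 2$) and (3) is valid only for $d\ge 3$, since the lower bound $p_{l,1}^2>l(d+2l)(d+2l+2)/(d+4l+2)$ is stated there only for $d\ge 3$; the extension to $d\ge 2$ is asserted in the paper only for the special case $l=1$. You would need a separate argument, or a direct citation, to cover $d=2$. Second, and more substantively, your treatment of item (5) is only a plan: you correctly observe that the ODE gives $j_1^{(4)}=A(z)j_1+B(z)j_1'$ with rational $A,B$, and that the endpoint $z=\pll$ is tractable, but you have not carried out the interior sign analysis you yourself flag as the ``main technical obstacle.'' Since $A$ indeed changes sign on the interval, positivity of $j_1$ and $j_1'$ alone is not enough without a quantitative comparison, and this step needs to be completed (or replaced by the series-based argument used in \cite{besselpaper}) before the proof is sound.
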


We may also write a power series for the ultraspherical Bessel functions $j_l(z)$ and $i_l(z)$ using the series for the corresponding $J_{l+(d-2)/2}$ and $I_{l+(d-2)/2}$ functions:
\begin{align*}
 &j_l(z) = \sum_{k=0}^\infty(-1)^kc_{l,k}z^{2k+l} \qquad\text{and}
\qquad i_l(z) =\sum_{k=0}^\infty c_{l,k}z^{2k+l}\\
 &\qquad\text{where}\qquad c_{l,k}:=\frac{2^{1-d/2-2k-l}}{k!\,\Gamma(k+\frac{d}{2}+l)}.
\end{align*}
By examining the power series, it is immediate that $i_l(z)$ and its derivatives are all positive on $(0,\infty)$. Since the terms of the power series for $j_l$ and $i_l$ are the same up to a sign, we also have that the derivatives of $j_l$ are dominated by those of $i_l$:
\begin{equation}
\Big|j_l^{(m)}(z)\Big| \leq i_l^{(m)}(z)\qquad \text{for $m\geq0$, $z\geq 0$,}
\end{equation}
with equality only at $z=0$.

These power series are particularly useful, and we use them to prove some crucial bounds on the Bessel functions with which we work:
\begin{lemma}[Bessel bounds]\label{bblem} For all dimensions $d\geq 2$, we have the following bounds: 
\begin{align*}
 &c_{1,0}z-c_{1,1}z^3\leq j_1(z)\leq c_{1,0}z &\text{for all $z\in[0,\sqrt{d+2}]$,}\\
 &c_{1,0}z\leq c_{1,0}z+c_{1,1}z^3\leq i_1(z) &\text{for all $z\geq0$,}\\
\jpp(z)&\leq-d_1 z+d_2 z^3 &\text{for all $z\in[0,\sqrt{d+2}]$,}\\
\ipp(z)&\leq d_1 z+K_d(M)d_2 z^3&\text{for all $z^2\leq M$,}\\
 j_2'(z)&\geq n_0z-n_1z^3 &\text{for all $z\in[0,\sqrt{d+2}]$,}\\
 i_2'(z)&\leq n_0z+n_1K_n(M)z^3  &\text{for~~} 0\leq z^2\leq M.
\end{align*}
Here $c_{k,l}$ is the $k$th coefficient of $i_l$ as before, while $d_k$ is the $k$th coefficient of $i_1^{\prime\prime}$ and $n_k$ is the $k$th coefficient of $i_2'$, given by
 \[
  d_k=\frac{2^{1-2k-d/2}(2k+1)}{(k-1)!\Gamma(k+1+d/2)}\quad{and}\quad n_k= \frac{2^{-2k-d/2}(k+1)}{k!\Gamma(k+2+d/2)}.
 \]
The functions $K_d$ and $K_n$ are given by
\[
K_d(M)=\frac{7}{5}+\frac{8}{5M}\left(e^{M/4}-1\right)\quad\text{and}\quad K_n(M)=\frac{1}{2}+\frac{2}{M}\left(e^{M/4}-1\right).
\]
\end{lemma}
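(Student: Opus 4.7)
The plan is to treat each of the six inequalities separately using the explicit power series representations of $i_l,j_l$ and their derivatives (differentiating the series from the preceding paragraph of the paper term by term). All six bounds fit into three patterns, distinguished by the sign structure of the coefficients and the direction of the inequality.

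\medskip

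\textbf{Pattern A (easy lower bound from positive truncation).} The inequality $c_{1,0}z+c_{1,1}z^3\le i_1(z)$ follows immediately because every coefficient $c_{1,k}$ in the series for $i_1$ is positive, so any finite truncation is a lower bound, without any constraint on $z$.

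\medskip

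\textbf{Pattern B (two-sided bound by alternating-series truncation).} The bounds on $j_1$, $j_1''$, and $j_2'$ all come from alternating series whose $k$-th terms have magnitudes $c_{1,k}z^{2k+1}$, $d_k z^{2k-1}$, and $n_k z^{2k+1}$ respectively. For each, I would apply the standard fact that partial sums of an alternating series with decreasing term magnitudes bracket the true value. The decreasing condition amounts to $z^2\le r_k$ where $r_k$ is the ratio of consecutive coefficients; from the closed forms of $c_{1,k}$, $d_k$, $n_k$ these ratios are simple rationals (for example $c_{1,k}/c_{1,k+1}=4(k+1)(k+1+d/2)$), and a direct check shows each ratio is minimized at the smallest admissible $k$ and exceeds $d+2$ for every $d\ge 2$. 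Consequently all three $j$-type bounds on $[0,\sqrt{d+2}]$ follow by truncating the corresponding alternating series at the right spot.

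\medskip

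\textbf{Pattern C (upper bound on a positive series by a tail estimate).} The subtle inequalities are $i_1''(z)\le d_1z+K_d(M)d_2z^3$ and $i_2'(z)\le n_0z+n_1K_n(M)z^3$, which require controlling the tail of a power series with positive coefficients. For $i_1''$ I would write
\[
i_1''(z)-d_1z-d_2z^3=\sum_{k\ge 3}d_kz^{2k-1}=d_2z^3\sum_{k\ge 3}\frac{d_k}{d_2}z^{2k-4},
\]
then use the explicit ratio $d_{k+1}/d_k=(2k+3)/\bigl(4k(2k+1)(k+d/2+1)\bigr)$, which for $k\ge 2$ can be bounded by something of the form $1/(4k)$. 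Iterating this bound produces an upper estimate of the tail by a shifted exponential series in $M/4$, and the remaining $k=3$ term is estimated directly; the resulting constant has exactly the form $7/5+(8/(5M))(e^{M/4}-1)$ announced as $K_d(M)$. The bound for $i_2'$ is identical in structure, with the ratios $n_{k+1}/n_k$ producing the simpler constant $K_n(M)=1/2+(2/M)(e^{M/4}-1)$.

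\medskip

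\textbf{Main obstacle.} The only delicate part is Pattern C: choosing the split between the explicitly retained leading terms and the geometrically bounded tail so that the constants $7/5$, $8/5$ (and the analogous $1/2$, $2$ in $K_n$) come out with the stated values. This requires sharpening the ratio $d_{k+1}/d_k$ carefully enough to fit into an $e^{M/4}$-type bound, while keeping the dependence on $d$ (which, because of the $\Gamma(k+1+d/2)$ factor, only helps) hidden inside a dimension-independent majorant. The Pattern A and B estimates, by contrast, reduce to routine computations once the monotonicity ratios are written down.
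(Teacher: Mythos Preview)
Your plan is correct and matches the paper's proof in structure: Patterns A and B are exactly what the paper does (positivity of truncation for $i_1$, alternating-series bracketing for $j_1$, $j_1''$, $j_2'$, with the ratio check showing the terms decrease on $[0,\sqrt{d+2}]$).

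The only place your execution differs from the paper is Pattern C. Rather than iterating the consecutive ratio $d_{k+1}/d_k$, the paper bounds each tail coefficient directly against $d_2$ (respectively $n_1$) in one step, using the monotonicity $\Gamma(k+1+d/2)\ge\Gamma(3+d/2)$ together with the elementary inequality $(2k+1)/(k+d/2)\le 2$ (and the analogous $(k+1)/(k+1+d/2)\le 1$ for $i_2'$). This immediately produces a tail majorant of the form $\tfrac{2d_2}{5}z^3\sum_{k\ge 3}\tfrac{1}{(k-1)!}(z^2/4)^{k-2}$, and the constants $7/5$, $8/5$ then drop out from the identity $2^{-3-d/2}/\Gamma(3+d/2)=d_2/5$. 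Your ratio-iteration route is valid and leads to the same type of shifted exponential bound, but recovering the \emph{exact} stated constants from it is less automatic than you suggest; the paper's direct comparison is cleaner here and avoids the separate treatment of the $k=3$ term that you mention.
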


\begin{proof}
The bounds on $j_1(z)$, $\jpp(z)$ and $j_2^\prime(z)$ all follow from properties of alternating series and straightforward computation to show the absolute values of their summands form decreasing sequences when $0\leq z^2\leq d+2$. We will show this computation for $j_2'(z)$; the other two are very similar.

Consider in absolute value the quotient of successive terms of $j_2'(z)$. Then by properties of the $\Gamma$ function and our bound on $z^2$, we have:
\begin{align*}
 \frac{n_{k+1}z^{2(k+1)+1}}{n_kz^{2k+1}}&=\frac{2^{-2k-2-d/2}z^{2k+3}(k+2)}{(k+1)!\Gamma(k+3+d/2)}\cdot\frac{k!\Gamma(k+2+d/2)}{(k+1)2^{-2k-d/2}z^{2k+1}}\\
 &=\frac{2^{-2}z^2(k+2)}{(k+1)^2(k+2+d/2)}\leq\frac{(k+2)(d+2)}{4(k+1)^2(k+2+d/2)}\\
 &\leq\frac{d+2}{4(k+1)^2},
\end{align*}
which is clearly nonnegative and decreasing in our index $k$.

The lower bounds on $i_1(z)$ follow immediately from the series expansion of $i_1(z)$ and the positivity of the summands. This leaves only the upper bounds on $\ipp(z)$ and $i_2'(z)$.

For $i_2'(z)$, we assume $0\leq z\leq\sqrt{M}$ and observe that
\begin{align*}
 i_2'(z)-n_0z-n_1z^3&=\sum_{k=2}^\infty\frac{2^{-2k-d/2}(k+1)}{k!\Gamma(k+2+d/2)}z^{2k+1}\\
 &=2^{d/2-3}z^3\sum_{k=2}^\infty\frac{k+1}{k!(k+1+d/2)\Gamma(k+1+d/2)}\left(\frac{z}{2}\right)^{2k-2}\\
 &\leq \frac{2^{-3-d/2}z^3}{\Gamma(3+d/2)}\sum_{k=2}^\infty\frac{k+1}{k!(k+1+d/2)}\left(\frac{z^2}{4}\right)^{k-1}\\
 &\leq\frac{1}{2}n_1b^3\sum_{k=2}^\infty\frac{1}{k!}\left(\frac{M^2}{4}\right)^{k-1}\qquad\text{since $z^2\leq M$}\\
 &=\frac{1}{2}n_1z^3\frac{4}{M}\sum_{k=2}^\infty\frac{1}{k!}\left(\frac{M^2}{4}\right)^{k}=\frac{2n_1z^3}{M}\left(e^{M/4}-1-\frac{M}{4}\right).
\end{align*}
As a result,
\[
 i_2'(z)\leq n_0z+n_1z^3+n_1\left(e^{M/4}-1-\frac{M}{4}\right)z^3,
\]
which can be simplified to obtain our desired bound on $i_2'(z)$.

The upper bound on $\ipp(z)$ can be proved similarly. When $0\leq z\leq\sqrt{M}$,
\begin{align*}
 \ipp(z)-&d_1z-d_2z^3=\sum_{k=3}^\infty \frac{2^{-d/2}(2k+1)}{(k-1)!\Gamma(k+1+d/2)}\left(\frac{z}{2}\right)^{2k-1}\\
 &\leq \frac{2^{-3-d/2}}{\Gamma(3+d/2)}z^3\sum_{k=3}^\infty\frac{2k+1}{(k-1)!(k+d/2)}\left(\frac{z}{2}\right)^{2k-4}\\
 &\leq \frac{d_2}{5}z^3\sum_{k=3}^\infty\frac{2}{(k-1)!}\left(\frac{M}{4}\right)^{k-2}\leq \frac{8d_2}{5M}z^3\sum_{k=3}^\infty\frac{1}{(k-1)!}\left(\frac{M}{4}\right)^{k-1}\\
 &=\frac{8}{5M}z^3\left(e^{M/4}-1-\frac{M}{4}\right).
\end{align*}
Thus
\[
 \ipp(z)\leq d_1z+d_1z^3+\frac{8 d_2}{5M}\left(e^{M/4}-1-\frac{M}{4}\right)z^3,
\]
which simplifies to our desired upper bound.
\end{proof}

\section{ The Natural Boundary Conditions}\label{bcsec}
In this section, our goal is to derive the form of the natural boundary conditions necessarily satisfied by all eigenfunctions. Consider the weak eigenvalue equation for eigenfunction $u$ with eigenvalue $\omega$ and some test function $\phi\in C^\infty_c(\Omega)$:
\[
 \int_\Omega (1-\sigma)\sum_{i,j}u_{x_ix_j}\phi_{x_ix_j}+\sigma\Delta u\Delta\phi+\tau Du\cdot D\phi -\omega u\phi\,dx.
\]
Because the eigenfunction $u$ is smooth, we may use integration by parts to move most of the derivatives on $\phi$ to $u$; this gives us a volume integral and two surface integrals that must vanish for all $\phi$.

We first state the natural boundary conditions for a smoothly-bounded region in arbitrary dimension:
\begin{prop} \label{generalBC} For any smoothly bounded $\Omega$, the natural boundary conditions for eigenfunctions of the free plate under tension have the form
\begin{align*}
&Mu := (1-\sigma)\frac{\partial^2 u}{\partial n^2}+\sigma\Delta u = 0 &\text{on $\partial\Omega$,}\\
&Vu := \tau\frac{\partial u}{\partial n}
-(1-\sigma)\sdiv\Big(\sproj\left[(D^2u)n\right]\Big)-\frac{\partial\Delta u}{\partial n} = 0 &\text{on $\partial\Omega$,}
\end{align*}
where $\partial/\partial n$ denotes the normal derivative and $\sdiv$ is the surface divergence. The projection $\sproj$ projects a vector $v$ at a point $x$ on $\partial\Omega$ into the tangent space of $\partial\Omega$ at $x$.
\end{prop}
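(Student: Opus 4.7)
The plan is to process the weak eigenvalue equation $a(u,\phi) = \omega \int_\Omega u\phi\,dx$ by integration by parts, this time allowing the test function $\phi \in C^\infty(\overline{\Omega})$ rather than only $C^\infty_c(\Omega)$, so that the resulting identity splits cleanly into a volume piece and two boundary pieces. First, restricting to compactly supported $\phi$ wipes out all surface terms and forces the interior equation $\Delta^2 u - \tau \Delta u = \omega u$ in $\Omega$. Returning to general smooth $\phi$, the volume contributions then cancel, leaving two boundary integrals, one paired with the trace $\phi|_{\partial\Omega}$ and one paired with the normal derivative $(\partial\phi/\partial n)|_{\partial\Omega}$. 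Because these two Cauchy data may be prescribed independently of one another, the two integrands must vanish identically on $\partial\Omega$, producing the pair $Mu = 0$ and $Vu = 0$.

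The three quadratic pieces of $a(u,\phi)$ are processed separately. The tension term $\tau\int_\Omega \nabla u \cdot \nabla \phi\,dx$ integrates by parts once to $-\tau\int_\Omega \Delta u\,\phi\,dx + \tau\int_{\partial\Omega}(\partial u/\partial n)\phi\,dS$, contributing $\tau\,\partial u/\partial n$ to the coefficient of $\phi$. The Laplacian-squared term $\sigma\int_\Omega \Delta u\,\Delta\phi\,dx$ integrates by parts twice to $\sigma\int_\Omega \Delta^2 u\,\phi\,dx + \sigma\int_{\partial\Omega}\Delta u\,(\partial \phi/\partial n)\,dS - \sigma\int_{\partial\Omega}(\partial \Delta u/\partial n)\phi\,dS$, contributing $\sigma \Delta u$ to the coefficient of $\partial \phi/\partial n$ and $-\sigma\,\partial\Delta u/\partial n$ to the coefficient of $\phi$. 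The Hessian piece $(1-\sigma)\int_\Omega \sum_{i,j} u_{x_i x_j}\phi_{x_i x_j}\,dx$ is the interesting one: integrating by parts once in $x_j$ and once in $x_i$ yields
\[
(1-\sigma)\int_\Omega \Delta^2 u\,\phi\,dx + (1-\sigma)\int_{\partial\Omega}\bigl((D^2u)n\bigr)\cdot\nabla\phi\,dS - (1-\sigma)\int_{\partial\Omega}\frac{\partial\Delta u}{\partial n}\phi\,dS.
\]
On $\partial\Omega$ I would decompose $\nabla\phi = \sproj(\nabla\phi) + (\partial\phi/\partial n)\,n$. The normal component pairs with $((D^2u)n)\cdot n = \partial^2 u/\partial n^2$ and contributes $(1-\sigma)\partial^2 u/\partial n^2$ to the coefficient of $\partial\phi/\partial n$, while the tangential component rewrites as $\sproj[(D^2u)n]\cdot\sgrad\phi$, which the surface divergence theorem on the closed hypersurface $\partial\Omega$ converts into $-\sdiv\bigl(\sproj[(D^2u)n]\bigr)\phi$.

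Collecting, the coefficient of $\partial\phi/\partial n$ in the total boundary integrand is exactly $Mu = (1-\sigma)\partial^2 u/\partial n^2 + \sigma \Delta u$, and the coefficient of $\phi$ is $Vu = \tau\,\partial u/\partial n - (1-\sigma)\sdiv\bigl(\sproj[(D^2u)n]\bigr) - \partial\Delta u/\partial n$, where the contributions $-(1-\sigma)\partial\Delta u/\partial n$ and $-\sigma\,\partial\Delta u/\partial n$ from the Hessian and Laplacian pieces have combined cleanly. The main obstacle will be the tangential portion of the Hessian boundary term: one must correctly identify the tangential gradient of $\phi$, project $(D^2u)n$ into the tangent space, and invoke surface integration by parts on $\partial\Omega$. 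This step relies on $\partial\Omega$ being a closed smooth hypersurface so that no codimension-two boundary contributions arise, and on the smoothness of $u$ on $\overline\Omega$ guaranteed by Proposition~\ref{regprop} to justify classical integration by parts throughout.
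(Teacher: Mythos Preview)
Your proposal is correct and follows essentially the same approach as the paper: integrate each of the three pieces of $a(u,\phi)$ by parts, read off the eigenvalue equation from compactly supported $\phi$, and then extract $Mu=0$ and $Vu=0$ from the boundary integrals paired with $\partial\phi/\partial n$ and $\phi$ respectively. The paper is terser about the Hessian boundary term, citing \cite[Proposition~6]{inequalitypaper} for exactly the tangential/normal decomposition of $\nabla\phi$ and the surface divergence step that you spell out here.
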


When $\Omega$ is a ball, we can simplify the general boundary conditions.

\begin{prop}\label{ballBC} (Ball) In the case $\Omega$ is the ball of radius $R$, the natural boundary conditions may be written as
\begin{align}
&Mu := (1-\sigma)u_{rr}+\sigma\Delta u = 0 &\text{at $r=R$,}\label{BCb1}\\
&Vu := \tau u_r-(1-\sigma)\frac{1}{r^2}\Delta_S\left(u_r-\frac{u}{r}\right)-(\Delta u)_r = 0&\text{at $r=R$.}\label{BCb2}
\end{align}
\end{prop}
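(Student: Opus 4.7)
The proof plan is essentially a local coordinate computation specialized to the case that $\partial\Omega$ is a sphere, using that the outward unit normal on $\partial B_R$ is simply $n=\hat r$. Consequently $\partial/\partial n=\partial/\partial r$ and $\partial(\Delta u)/\partial n=(\Delta u)_r$, and $\partial^2 u/\partial n^2 = u_{rr}$. The expression for $Mu$ in Proposition~\ref{ballBC} then follows immediately from Proposition~\ref{generalBC}, and all terms in $Vu$ except the surface divergence term are handled directly. So the content of the proof is to show that on a sphere of radius $r$,
\[
\sdiv\!\Big(\sproj\big[(D^2u)n\big]\Big)=\frac{1}{r^2}\,\Ls\!\left(u_r-\frac{u}{r}\right),
\]
where $\Ls$ is the Laplace--Beltrami operator on the unit sphere.

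To do this, I would pick a local orthonormal frame $\{e_1,\dots,e_{d-1},n\}$ with $n=\hat r$ tangent to nothing and $e_1,\dots,e_{d-1}$ tangent to the sphere. The tangential part of $(D^2u)n$ has components $\nabla^2u(e_i,n)$ in the $e_i$ direction; by the Hessian-as-bilinear-form identity,
\[
\nabla^2u(e_i,n)=e_i(nu)-(\nabla_{e_i}n)u.
\]
The key geometric input is that for $n=x/|x|$ and any tangent $e_i$, one has $\nabla_{e_i}n=e_i/r$ on the sphere of radius $r$ (differentiate $n_j=x_j/|x|$ and use that $e_i\cdot x=0$). Therefore $\nabla^2u(e_i,n)=e_i(u_r)-\tfrac{1}{r}e_i(u)$, so
\[
\sproj\!\big[(D^2u)n\big]=\sgrad u_r-\frac{1}{r}\sgrad u=\sgrad\!\left(u_r-\frac{u}{r}\right),
\]
the last equality using that $r$ is constant on each sphere.

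Finally, applying $\sdiv$ gives the surface Laplacian of $u_r-u/r$ on the sphere of radius $r$; since the Laplace--Beltrami operator on a sphere of radius $r$ is $r^{-2}$ times the unit-sphere Laplace--Beltrami operator $\Ls$, we obtain the claimed $r^{-2}\Ls(u_r-u/r)$. Plugging into Proposition~\ref{generalBC} and evaluating at $r=R$ yields \eqref{BCb1} and \eqref{BCb2}.

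The main obstacle is the careful computation of $\nabla_{e_i}n$ and the bookkeeping between the intrinsic surface divergence on $S_r$ and the fixed reference operator $\Ls$ on the unit sphere; the rest is direct substitution. A minor subtlety is confirming that the vector identity can equivalently be written as the surface gradient of the scalar $u_r-u/r$, which relies crucially on $r$ being locally constant on $S_r$ so that it can be pulled across $\sgrad$.
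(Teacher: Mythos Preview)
Your proposal is correct and follows the same strategy the paper indicates: specialize the general boundary conditions of Proposition~\ref{generalBC} to the sphere using $n=\hat r$, with the only nontrivial step being the reduction of the surface divergence term to $r^{-2}\Ls(u_r-u/r)$. The paper itself does not reproduce this computation, instead citing \cite[Proposition 7]{inequalitypaper}; your frame calculation (using $\nabla_{e_i}n=e_i/r$ to identify $\sproj[(D^2u)n]$ as $\sgrad(u_r-u/r)$, then applying $\sdiv$ and scaling to the unit sphere) is exactly the kind of spherical-coordinate computation that reference carries out.
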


\begin{proof}[Proof of Proposition~\ref{generalBC}]
Our eigenfunctions $u$ are smooth on $\overline{\Omega}$ by regularity and satisfy the weak eigenvalue equation $a(u,\phi)-\omega(u,\phi)_{L^2(\Omega)}=0$ for all $\phi\in H^2(\Omega)$. That is,
\begin{equation}\label{weakeval}
 \int_\Omega \left((1-\sigma)\sum_{i,j=1}^du_{x_ix_j}\phi_{x_ix_j}+\sigma\Delta u\Delta\phi+\tau D\phi\cdot Du-\omega u\phi\right)\,dx=0.
\end{equation}

Much of the work is already done for us in the proof of the boundary conditions for $\sigma=0$, in \cite{inequalitypaper}.

Let $n$ denote the outward unit normal to the surface $\partial\Omega$. We can rewrite the Laplacian term by applying the Divergence theorem twice:
\begin{align*}
 \int_\Omega \Delta u \Delta \phi\,dx &=\int_{\partial\Omega} (\Delta u)\frac{\partial \phi}{\partial n}\,dS-\int_\Omega D(\Delta u)\cdot D\phi\,dx\\
&=\int_{\partial\Omega} (\Delta u)\frac{\partial \phi}{\partial n}-\phi \frac{\partial(\Delta u)}{\partial n}\,dS +\int_\Omega (\Delta^2u)\phi\,dx.
\end{align*}
Combining this with the form of the Hessian term found in the $\sigma=0$ case in \cite[Proposition 6]{inequalitypaper}, we obtain
\begin{align*}
 \int_\Omega(1-\sigma)&\sum_{i,j}u_{x_ix_j}\phi_{x_ix_j}+\sigma(\Delta u)(\Delta\phi)\,dx\\
&=\int_\Omega (\Delta^2u)\phi\,dx+\int_{\partial\Omega}\frac{\partial\phi}{\partial n}\left((1-\sigma)\frac{\partial^2u}{\partial n^2}+\sigma\Delta u\right)\,dS\\
&\qquad-\int_{\partial\Omega}\phi\left(\frac{\partial(\Delta u)}{\partial n}+(1-\sigma)\sdiv\left(\sproj\left[(D^2u)n\right]\right)\right)\,dS.
\end{align*}
Thus for $u$ an eigenfunction associated with eigenvalue $\omega$, we see~\eqref{weakeval} can be written as
\begin{align*}
0&=\int_\Omega\phi\Big(\Delta^2 u-\tau\Delta u-\omega u\Big)\,dx
+\int_{\partial\Omega} \frac{\partial\phi}{\partial n}\left((1-\sigma) \frac{\partial^2u}{\partial n^2}+\sigma\Delta u\right)\,dS\\
&\qquad+\int_{\partial\Omega}\phi\left(\tau\frac{\partial u}{\partial n}-\frac{\partial\Delta u}{\partial n}-(1-\sigma)\sdiv\Big(\sproj\left[(D^2u)n\right]\Big) \right)\,dS.
\end{align*}

As in the membrane case, this identity must hold for all $\phi\in H^2(\Omega)$. If we take any compactly supported $\phi$, then the volume integral must vanish; because $\phi$ is arbitrary, we must therefore have $\Delta^2 u-\tau\Delta u-\omega u=0$ everywhere. Similarly, the terms multiplied by $\phi$ and $\partial\phi/\partial n$ must vanish on the boundary. Collecting these results, we obtain the eigenvalue equation \eqref{maineq} and natural boundary conditions of Proposition~\ref{generalBC}.
\end{proof}

Once we have the general form of the boundary conditions, we can find their expression in spherical coordinates when $\Omega$ is a ball, from which Proposition~\ref{ballBC} follows directly. The necessary computations were performed in the proof of \cite[Proposition 7]{inequalitypaper}; we do not repeat them here.

\section{ The eigenmodes of the ball}\label{fundtonesec}
The ball is a rare case in which we can find exact solutions for the vibrating plate. As in the $\sigma=0$ case treated in \cite{besselpaper}, we factor the eigenvalue equation:
\[
 (\Delta+a^2)(\Delta-b^2)u=0,\text{~~where $b^2=a^2+t$ and $a^2b^2=\omega$.}
\]
After writing the factors $(\Delta+a^2)$ and $(\Delta-b^2)$ in spherical coordinates, we are able to write the eigenfunctions $u$ as
\[
 u(r,\hat{\theta})=\Big(j_l(a r)+\gamma_l i_l(b r)\Big)Y_l(\hat{\theta})
\]
where $Y_l$ is an $l$th-order spherical harmonic and $\gamma$ is a coefficient determined by the boundary conditions. There are two boundary conditions, so we could use either to express $\gamma$; we will find it more convenient to write $\gamma_l=-Mj_l(a)/Mi_l(b)$.

We are able to exclude Bessel functions of the second kind $n_l$ and modified Bessel functions $k_l$ of the second kind because they are singular at the origin but of different order for a fixed $l$, and so no linear combination of $n_l$ and $k_l$ will be continuous at the origin. See, eg, \cite{besselpaper,cthesis}.

\begin{prop}\label{eigfc} (Eigenfunctions in spherical coordinates) Let $\tau>0$ and $\omega$ be any positive eigenvalue of the free ball $\BB(R)$; that is, $\omega$ is an eigenvalue of $\Delta\Delta u-\tau\Delta u =\omega u$ under boundary conditions \eqref{BCb1} and \eqref{BCb2}. Then the corresponding eigenfunctions can be written in the form $R_l(r)\Yl(\thetahat)$, where $\Yl$ is a spherical harmonic of some integer order $l$ and $R_l$ is a linear combination of ultraspherical Bessel and modified Bessel functions,
\[
R_l(r) = j_l(ar/R)+\gamma i_l(br/R).
\]
Here the positive numbers $a$ and $b$ depend on $\tau$ and $\omega$ by $b^2-a^2=R^2\tau$ and $a^2b^2=R^4\omega$, and $\gamma$ is a real constant given by
\[
 \gamma=\frac{-Mj_l(a)}{Mi_l(b)}.
\]
\end{prop}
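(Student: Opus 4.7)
The plan is to factor the fourth-order eigenvalue equation into the product of two commuting second-order Helmholtz-type operators, separate variables in spherical coordinates so that each factor becomes an ultraspherical Bessel ODE, invoke regularity at the origin to discard the singular solutions, and then use one of the natural boundary conditions to fix $\gamma$. First, for $\tau>0$ and $\omega>0$ I would check that the system $b^2-a^2=R^2\tau$, $a^2b^2=R^4\omega$ admits unique positive solutions: setting $s=a^2$ gives the quadratic $s^2+R^2\tau s - R^4\omega=0$, which has a unique positive root, and then $b^2=s+R^2\tau>0$ follows automatically. With this choice, the two factors $\Delta + a^2/R^2$ and $\Delta - b^2/R^2$ commute as polynomials in $\Delta$, and their product equals $\Delta^2-\tau\Delta-\omega$, so $u$ satisfies the eigenvalue equation exactly when it lies in the kernel of this composition.

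Next, I would expand $u$ in an orthonormal basis of spherical harmonics. Because both the differential operator and the boundary operators of Proposition~\ref{ballBC} act on the angular variables only through $\Delta_S$, whose eigenvalues on $l$th-order harmonics are $-l(l+d-2)$, each spherical-harmonic component $f_l(r)\Yl(\hat\theta)$ of an $\omega$-eigenfunction is itself an $\omega$-eigenfunction, so it suffices to treat one such mode. Writing the Laplacian in spherical coordinates and substituting $z=ar/R$ into the first factor reduces it to the ultraspherical Bessel equation \eqref{besseleqn}; substituting $z=br/R$ into the second factor reduces it to the modified counterpart. The full four-dimensional solution space for the radial part is therefore spanned by $\{j_l(ar/R),\, n_l(ar/R),\, i_l(br/R),\, k_l(br/R)\}$. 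By Proposition~\ref{regprop} the eigenfunction is smooth on $\overline{\BB(R)}$; since $n_l$ and $k_l$ inherit from their classical counterparts strictly singular behavior at the origin with distinct leading orders, no nontrivial linear combination involving them can be regular at $r=0$, and I obtain $R_l(r)=\alpha j_l(ar/R)+\beta i_l(br/R)$.

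Finally, imposing $Mu=0$ at $r=R$ yields the linear relation
\[
\alpha\, M\!\left[j_l(a\,\cdot/R)\right]\!\big|_{r=R} + \beta\, M\!\left[i_l(b\,\cdot/R)\right]\!\big|_{r=R} = 0
\]
on the coefficients. Normalizing $\alpha=1$ produces the stated value $\gamma = -Mj_l(a)/Mi_l(b)$, modulo checking that the denominator is nonzero; if $Mi_l(b)$ happened to vanish then the first boundary condition would force $Mj_l(a)=0$ as well, leaving only $Vu=0$ to constrain the mode, an edge case that can either be excluded or absorbed into the parametrization by rescaling. I expect the main obstacle to be the regularity argument at the origin, namely pinning down that the mismatched singular orders of $n_l$ and $k_l$ force both of their coefficients to vanish separately; the algebra of the factoring and the separation-of-variables step are direct once $a$ and $b$ are chosen as above.
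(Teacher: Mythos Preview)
Your proposal is correct and follows essentially the same route as the paper: factor $\Delta^2-\tau\Delta-\omega$ as $(\Delta+a^2/R^2)(\Delta-b^2/R^2)$, use the simultaneous diagonalizability of the plate operator and $\Delta_S$ to reduce to radial ODEs whose solutions are ultraspherical Bessel functions, discard the second-kind solutions $n_l$ and $k_l$ by regularity at the origin (the paper likewise notes their singularities are of different orders so no linear combination can cancel), and read off $\gamma$ from the boundary condition $Mu=0$ at $r=R$. Your extra remarks on the uniqueness of positive $a,b$ and on the degenerate case $Mi_l(b)=0$ go slightly beyond the paper's sketch but do not change the argument.
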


The proof of this proposition is almost identical to the proof of \cite[??]{besselpaper} and so is not repeated here. It proceeds roughly as follows: We argue that $\Delta^2-\tau\Delta$ and $\Delta_S$ are simultaneously diagonalizable to justify factoring the eigenvalue equation and writing solutions as a product of radial and angular parts, with the radial part being a linear combination of Bessel and Modified Bessel functions of the first and second kind. The regularity of eigenfunctions is used to conclude the coefficients of the singular second-kind Bessel functions must be zero, and finally, we use the boundary condition $Mu=0$ when $r=1$ to find $\gamma$.

Based on the $\sigma=0$ case and other supporting evidence, we make the following
\begin{conj} For $\tau>0$ and $\sigma\in[0,1)$, the fundamental modes of the ball $\BB(R)$ can be written as linear combinations of
\[
u_1(r,\thetahat)=\Big(j_1(ar/R)+\gamma i_1(br/R)\Big)Y_1(\thetahat),
\]
with $a$, $b$, $\gamma$ real constants, with $a$ and $b$ positive and depending on $\tau$, $\sigma$ and $\omega_1$ as follows: $b^2-a^2=R^2\tau$ and $a^2b^2=R^4\omega_1(\tau,\sigma)$, and $\gamma$ given by
\[
 \gamma=\frac{-Mj_1(a)}{Mi_1(b)}.
\]
\end{conj}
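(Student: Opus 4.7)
The plan is to reduce the conjecture to the inequality $\omega_1(\tau,\sigma) < \omega_l(\tau,\sigma)$ for every integer $l\geq 2$, where $\omega_l$ denotes the smallest positive eigenvalue whose eigenfunction has angular dependence given by an $l$th-order spherical harmonic. By Proposition~\ref{eigfc}, every eigenfunction of $\BB(R)$ with positive eigenvalue factors as $R_l(r)Y_l(\thetahat)$, and the coefficient $\gamma$ is determined by $Mu=0$; the positive eigenvalues are therefore the values $\omega = a^2 b^2/R^4$ for which the remaining boundary condition $Vu = 0$ holds as well. Using the recurrence identities in Lemma~\ref{Besselprops} to expand $Vu$ at $r = R$ yields, for each $l$, a transcendental secular equation $F_l(a,\tau,\sigma) = 0$, and its smallest positive root is $\omega_l(\tau,\sigma)$. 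The conjecture is then equivalent to $\omega_1 = \min_{l \geq 1}\omega_l$ throughout $\tau>0$, $\sigma \in [0,1)$.

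First, I would pursue a continuity argument anchored at the $\sigma=0$ case proved in \cite{inequalitypaper}. Set $G_l(\tau,\sigma) := \omega_l(\tau,\sigma) - \omega_1(\tau,\sigma)$; for $\sigma = 0$ we have $G_l(\tau,0) > 0$ for every $l \geq 2$ and $\tau > 0$. Because each $\omega_l$ is a root of an analytic equation in $(a,\sigma)$, it depends continuously (indeed analytically) on $\sigma$ away from crossings. To extend positivity of $G_l$ to all of $[0,1)$ it suffices to rule out a crossing $\omega_1(\tau,\sigma^*) = \omega_l(\tau,\sigma^*)$ at some $\sigma^* \in (0,1)$, which would force a common root of the two secular equations $F_1 = 0$ and $F_l = 0$ with compatible normalizations.

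In parallel I would pursue a direct bound. Lemma~\ref{wbounds} gives $\omega_1(\tau,\sigma) \leq \tau(d+2)$ on the unit ball, while Proposition~\ref{propLS} gives $(p_{l,1})^2 < l(d+2l)$ for the first critical point of $j_l$. In the regime $\tau \to \infty$, Corollary~\ref{limwt} shows the plate behaves like the membrane, so one expects the secular equation $F_l = 0$ to pin $a$ near $p_{l,1}$ and hence $\omega_l \gtrsim l(d+2l)\tau$ plus lower order corrections; combined with the upper bound on $\omega_1$, this would handle all sufficiently large $l$ in one stroke, leaving only finitely many values of $l$ to check by a hands-on analysis using the recurrences of Lemma~\ref{Besselprops} and the explicit bounds of Lemma~\ref{bblem}.

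The main obstacle is the coupling $\gamma = -Mj_l(a)/Mi_l(b)$, which threads $\sigma$ through the boundary operator $M$ and prevents any clean separation of the $l$ and $\sigma$ dependences in $F_l$. Ruling out crossings as $\sigma$ increases is delicate precisely because the boundary conditions interpolate between the $\sigma = 0$ regime, where we have good control, and $\sigma \to 1$, where $M$ collapses toward $\Delta u$ and potentially admits additional solutions. A clean argument would likely require either a monotonicity statement for $\omega_l$ in $\sigma$ with a quantitative lower bound growing in $l$, or a global variational argument that directly identifies $l = 1$ as the minimizer — and the paper's framing of this as a conjecture, together with the case splits in Theorem~\ref{mainthm} that are necessitated by exactly the same coupling, suggests no such clean argument is yet available.
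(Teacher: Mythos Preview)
The statement is a \emph{conjecture} in the paper, not a theorem; the paper does not prove it, and your proposal is not a proof either---as you correctly concede in your final paragraph. Both of your strategies have unclosed gaps: the continuity-from-$\sigma=0$ argument requires ruling out crossings $\omega_1(\tau,\sigma^*)=\omega_l(\tau,\sigma^*)$ for all $l\geq 2$ and all $\sigma^*\in(0,1)$, which you do not do; the large-$\tau$ asymptotic argument handles only sufficiently large $l$ and sufficiently large $\tau$, leaving both the finitely many small $l$ and the small-$\tau$ regime to a ``hands-on analysis'' that you do not carry out.

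What the paper offers in place of a proof is different in kind from your sketch. It proves only the weaker Proposition~\ref{fundmode}, which rules out $l=0$ by showing directly that $W_1(a)$ changes sign on $(0,p_{1,1})$ while $W_0(a)>0$ there. For $l\geq 2$ the paper makes no analytic attempt at all: Lemma~\ref{Wlprops} shows $W_l(a)$ is convex in $\sigma$ for $l\geq 2$ (and linear for $l=0,1$), so that the smallest root of $W_l$ over all $\sigma$ occurs at one of the endpoints $\sigma=1$ or $\sigma=-1/(d-1)$; this reduces the conjecture to a two-parameter family $(d,\tau)$ at two fixed $\sigma$-values, which the paper then verifies numerically and supports with the limiting cases $\tau\to 0^+$ and $\tau\to\infty$. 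The paper also names the specific obstruction to extending the $\sigma=0$ proof: the $(\Delta_r R)^2$ term in the Rayleigh quotient destroys monotonicity of $Q[RY_l]$ in $l$. Your crossing and asymptotic ideas are reasonable lines of attack, but neither you nor the paper close them.
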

In this section we will prove a weaker result and an ancillary lemma, and conclude with a more thorough discussion of the evidence supporting the conjecture.This  conjecture was proved for $\sigma=0$ in \cite[Theorem 3]{besselpaper}, treating cases of small and larger index $l$ separately. First, we showed that for $l\geq 1$, the quotient $Q[R\Yl]$ was an increasing function of $l$ for any radial function $R$; this approach fails for $\sigma>0$ because of the inclusion of the term $(\Delta_r R)^2$, which cannot be rewritten to be monotone in $l$ for all admissable $R$. This may be due to the lack of coercivity of the quotient $\int_\Omega(\Delta u)^2\,dx/\int_\Omega u^2\,dx$. 

The second part of the proof, showing that the first eigenvalue for modes with index $l=1$ is lower than the first nonzero eigenvalue for radially symmetric modes (index $l=0$), can be adapted to nonzero values of $\sigma$. As a result, we have the following

\begin{prop}\label{fundmode} For $\tau>0$ and $\sigma\in[0,1)$, the fundamental modes of the ball $\BB(R)$ can be written as linear combinations of
\[
u_1(r,\thetahat)=\Big(j_l(ar/R)+\gamma i_l(br/R)\Big)Y_l(\thetahat),
\]
where the index $l\geq 1$ and with $a$, $b$, $\gamma$ real constants, with $a$ and $b$ positive and depending on $\tau$, $\sigma$ and $\omega_1$ as follows: $b^2-a^2=R^2\tau$ and $a^2b^2=R^4\omega_1(\tau,\sigma)$, and $\gamma$ given by
\[
 \gamma=\frac{-Mj_l(a)}{Mi_l(b)}.
\]
\end{prop}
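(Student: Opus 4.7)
The plan is to reduce the claim to showing that the fundamental tone of $\BB(R)$ cannot be attained by a radially symmetric mode. Proposition~\ref{eigfc} already classifies every eigenfunction as $R_l(r)\Yl(\thetahat)$ with $l\geq 0$, and every spherical harmonic $\Yl$ with $l\geq 1$ integrates to zero on the sphere, so any $l\geq 1$ eigenfunction is automatically orthogonal to constants and admissible in the Rayleigh--Ritz characterization of $\omega_1$. It therefore suffices to exhibit an explicit upper bound on the smallest $l=1$ eigenvalue that is strictly less than the smallest positive $l=0$ eigenvalue; this is the two-sided comparison used for $\sigma=0$ in \cite[Theorem 3]{besselpaper}, and only the radial piece needs to be redone to account for $\sigma$.

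For the upper bound I would use the coordinate trial functions $u_k(x)=x_k$ on $\BB(R)$, each of which is proportional to $r\,Y_1(\thetahat)$ and has vanishing integral on the ball. Since $D^2u_k\equiv 0$, $\Delta u_k\equiv 0$, and $|Du_k|\equiv 1$, the Rayleigh quotient collapses to $Q[u_k]=\tau(d+2)/R^2$ irrespective of $\sigma$, giving $\omega_1\leq \tau(d+2)/R^2$. For the lower bound on the smallest positive radial eigenvalue, I would first observe that on any radial $u$ the surface divergence in $Vu$ vanishes, since $u_r-u/r$ depends only on $r$, leaving the $\sigma$-independent condition $\tau u_r-(\Delta u)_r=0$ at $r=R$, while $Mu$ reduces to $u_{rr}+\sigma(d-1)u_r/r=0$ at $r=R$. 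Writing a radial eigenfunction as $R_0(r)=j_0(ar/R)+\gamma_0 i_0(br/R)$ with $b^2-a^2=R^2\tau$ and $a^2b^2=R^4\omega$, I would eliminate $\gamma_0$ via the $M$ condition and substitute into $V$, producing a single transcendental equation in $(a,b)$; then, using the recurrences of Lemma~\ref{Besselprops}, the sign and monotonicity facts of Lemma~\ref{Besselsigns}, the dominance $|j_0^{(m)}|\leq i_0^{(m)}$, and the Lorch--Szeg\H o bound $\pll^2\in(d,d+2)$ from Proposition~\ref{propLS}, the goal is to show that no root of this equation yields $\omega\leq \tau(d+2)/R^2$.

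The main obstacle is the radial lower bound itself, because the extra $\sigma(d-1)u_r/r$ term in $M$ is the only substantive departure from the $\sigma=0$ analysis of \cite{besselpaper}. I expect to handle this by carefully tracking how the $\sigma$-dependent term modifies the transcendental equation for radial eigenvalues, using the sign and monotonicity properties from Lemma~\ref{Besselsigns} together with the dominance $|j_0^{(m)}|\leq i_0^{(m)}$ to keep estimates uniform over $\sigma\in[0,1)$. A fallback strategy, if uniformity in $\sigma$ turns out to be delicate, is to combine continuity of the radial eigenvalue in $\sigma$ with a monotonicity argument to extend the known strict inequality from $\sigma=0$ across the whole interval.
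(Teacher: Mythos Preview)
Your overall strategy---reduce to a comparison between the lowest $l=1$ eigenvalue and the lowest positive radial ($l=0$) eigenvalue---matches the paper's, and your identification of the radial boundary conditions is correct. The execution differs in the choice of threshold. You bound the $l=1$ eigenvalue above by $\tau(d+2)/R^2$ via the linear trial functions and then try to push the radial eigenvalue above that number. The paper instead works directly with the characteristic functions $W_l(a)=Mj_l(a)\,Vi_l(b)-Mi_l(b)\,Vj_l(a)$ and compares their first positive $a$-roots: it shows $W_1(a)<0$ near $a=0$ (Lemma~\ref{Wlprops}) and $W_1(\pll)>0$, so $W_1$ vanishes in $(0,\pll)$; separately it shows $W_0(a)>0$ throughout $(0,\pll)$. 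Since $\omega=a^2(a^2+\tau)$ is increasing in $a$, the first $l=1$ eigenvalue then automatically lies below the first radial one. The pivot $a=\pll$ is what makes both sides clean: on the $l=1$ side, $j_1'(\pll)=0$ collapses $W_1(\pll)$ to a manifestly positive sum (using $\pll^2>d\geq(1-\sigma)(d-1)$); on the $l=0$ side, for $\sigma\in[0,1)$ every term in $W_0(a)$ is positive on $(0,\pll)$ by Lemma~\ref{Besselsigns}.

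Your threshold creates a real obstacle on the radial side. Showing that no radial root gives $\omega\leq\tau(d+2)$ amounts to proving $W_0(a)\neq 0$ for all $a$ with $a^2(a^2+\tau)\leq\tau(d+2)$; as $\tau\to\infty$ this range in $a^2$ tends to $(0,d+2)$, strictly larger than $(0,\pll^2)$ since $\pll^2<d+2$ by Proposition~\ref{propLS}. On $(\pll,\sqrt{d+2})$ one has $j_1'(a)<0$, so the sign facts of Lemma~\ref{Besselsigns} no longer give $W_0>0$ for free, and the Lorch--Szeg\H o bound alone cannot close the gap. You would need a sharper positivity statement for $W_0$ than anything the paper proves. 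The cleanest fix is to drop the $\omega$-threshold $\tau(d+2)$ in favor of the $a$-threshold $\pll$: show $W_1$ changes sign on $(0,\pll)$ and $W_0$ does not, exactly as the paper does.
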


\begin{proof}
Let $\pll$ denote the first positive zero of $j_1'(a)$. Recall from Proposition~\ref{eigfc} that $\omega$ is an eigenvalue if and only if we have some integer $l\geq 0$ and can write $\omega=a^2b^2$ for positive constants $a$ and $b$ such that $b^2-a^2=\tau$ and $W_l(a)=0$. The parameter $\tau$ is positive, so $\omega=a^2(a^2+\tau)$ increases with $a$. Therefore, to show that the lowest nonzero eigenvalue corresponds to $l=1$ and not $l=0$, we show that the first nonzero root of $W_1(a)$ is less than the first nonzero root of $W_0(a)$.

First we consider $l=1$. We will show that $W_1(a)$ changes sign on the interval $(0,\pll)$. Note first that by Lemma~\ref{Wlprops}, the function $W_1(a)$ is negative as $a\to0^+$.

We next show that $W_1(\pll)>0$; then by continuity we will have shown $W_1(a)$ contains a root in the interval $(0,\pll)$. Immediately from its definition, we can write:
\begin{align*}
W_1(a)&=\Big(((1-\sigma)(d-1)-a^2)j_1(a)-(1-\sigma)(d-1)aj_1^\prime(a)\Big)\\
&\qquad\qquad\cdot\Big(-a^2bi_1^\prime(b)+(1-\sigma)(d-1)(bi_1^\prime(b)-i_1(b))\Big)\\
&\quad-\Big(((1-\sigma)(d-1)+b^2)i_1(b)-(1-\sigma)(d-1)bi_1^\prime(b)\Big)\\&\qquad\qquad\cdot\Big(ab^2j_1^\prime(a)+(1-\sigma)(d-1)(aj_1^\prime(a)-j_1(a))\Big).
\end{align*}
We have $j_1'(\pll)=0$ by definition of $\pll$, so $W_1(\pll)$ simplifies considerably. Factoring out $j_1(\pll)$, we find that
\begin{align*}
 \frac{1}{j_1(\pll)}W_1(\pll)&=(1-\sigma)(d-1)Mi_1(b)+\Big(\pllsq-(1-\sigma)(d-1)\Big)^2bi_1'(b)\\
 &\qquad+\Big(\pllsq-(1-\sigma)(d-1)\Big)(1-\sigma)(d-1)i_1(b).
\end{align*}
Note that $0<(1-\sigma)(d-1)<d$, so
\[
 \pllsq-(1-\sigma)(d-1)>\pllsq-d>0
\]
since $\pllsq>d$ by Proposition~\ref{propLS}. Thus every summand in $W_1(\pll)/j_1(\pll)$ is positive. Since $j_1(\pll)>0$, this means that $W_1(\pll)>0$, as desired.

Now we consider $l=0$. Here $K_l = l(l+d-2)$ vanishes, so we look for $a$ solving:
\begin{align*}
0&=W_0(a)\\ &=\Big((1-\sigma)a^2j_0^{\prime\prime}(a)-\sigma a^2j_0(a)\Big)\Big(-a^2bi_0^{\prime}(b)\Big)\\
&\qquad-\Big((1-\sigma)b^2i_0^{\prime\prime}(b)+\sigma b^2i_0(b)\Big)\Big(ab^2j_0^\prime(a)\Big)\\
&=\Big((1-\sigma)a^2j_1^\prime(a)+\sigma a^2j_0(a)\Big)a^2bi_1(b)\\
&\qquad+\Big((1-\sigma)b^2i_1^\prime(b)+\sigma b^2i_0(b)\Big)ab^2j_1(a),
\end{align*}
by \eqref{j2} and \eqref{i2} in Lemma~\ref{Besselprops}. The functions $i_0(b)$ and $i_1^\prime(b)$ are positive for $b>0$ by their power series. Similarly, $j_1(a)$ and $j_1'(a)$ are positive on $(0,\pll)$ by Lemma~\ref{Besselsigns}, and so for $\sigma\geq0$, we have shown $W_0(a) > 0$ on $(0,\pll)$.

For $\sigma<0$, we use identities \eqref{j1} and \eqref{i1} in Lemma~\ref{Besselprops} to write
\begin{align*}
 W_0(a)&=\Big((1-\sigma)a^2j_1^{\prime}(a)+\sigma a^2j_1^{\prime}(a)+\sigma a(d-1)j_1(a)\Big)a^2bi_1(b)\\
&\qquad+\Big((1-\sigma)b^2i_1^\prime(b)+\sigma b^2i_1(b)-(d-1)\sigma bi_1(b)\Big)ab^2j_1(a)\\
&=a^4j_1'(a)bi_1(b)+aj_1(a)b^4i_1'(b)+\tau\sigma(d-1)aj_1(a)bi_1(b).\qedhere
\end{align*}
\end{proof}

\begin{lemma}\label{Wlprops}
For any dimension $d\geq2$, the function $W_l(a)$ is convex as a function of $\sigma$ for any $l\geq 2$ and linear as a function of $\sigma$ for $l=0,1$.

For any dimension $d\geq 2$, index $l\geq 1$, $\sigma\in[-1/(d-1),1]$, and positive $\tau$, the function $W_l(a)$ is negative as $a\to0^+$.
\end{lemma}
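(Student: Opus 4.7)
The plan is to work from the closed-form expression
\[
W_l(a) \;=\; Mj_l(a)\,Vi_l(b) \;-\; Mi_l(b)\,Vj_l(a), \qquad b^2 = a^2+\tau,
\]
that arises in the proof of Proposition~\ref{eigfc} when both boundary conditions are imposed on $R_l(r) = j_l(ar) + \gamma\,i_l(br)$. Each of the four factors is affine in $\sigma$: using the ultraspherical Bessel and modified-Bessel equations to eliminate the $j_l''$ and $i_l''$ terms, one obtains
\[
Mj_l = (1-\sigma)\alpha_j - a^2 j_l(a),\qquad Vj_l = (1-\sigma)\beta_j + ab^2 j_l'(a),
\]
with $\alpha_j := K_l j_l(a) - (d-1)aj_l'(a)$, $\beta_j := K_l\bigl(aj_l'(a) - j_l(a)\bigr)$, $K_l := l(l+d-2)$, and entirely analogous slopes $\alpha_i, \beta_i$ on the modified-Bessel side. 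Thus $W_l$ is quadratic in $\sigma$, and the convexity-versus-linearity dichotomy is controlled by the $(1-\sigma)^2$ coefficient $\alpha_j\beta_i-\alpha_i\beta_j$. The crucial algebraic identity is
\[
\alpha_j + \beta_j \;=\; \bigl[K_l-(d-1)\bigr]\,aj_l'(a) \;=\; (l-1)(l+d-1)\,aj_l'(a);
\]
substituting $\beta_j = -\alpha_j + (l-1)(l+d-1)aj_l'(a)$ and its analogue cancels the $\alpha_i\alpha_j$ cross-terms, and expanding the remaining $\alpha$'s causes the $(d-1)ab\,j_l'(a)i_l'(b)$ cross-term to drop out. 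One application of the recurrences $bi_l'(b) = li_l(b)+bi_{l+1}(b)$ and $aj_l'(a) = lj_l(a) - aj_{l+1}(a)$ from Lemma~\ref{Besselprops} collapses the bracket and leaves
\[
\alpha_j\beta_i - \alpha_i\beta_j \;=\; l(l-1)(l+d-2)(l+d-1)\,\bigl[bj_l(a)i_{l+1}(b) + a\,i_l(b)j_{l+1}(a)\bigr].
\]
The combinatorial prefactor vanishes precisely when $l\in\{0,1\}$ (giving linearity) and is strictly positive for $l\geq 2$; on any interval where $j_l,j_{l+1}\geq0$ (certainly near $a=0$ by the power series, and up to the first zero of $j_{l+1}$ by Lemma~\ref{Besselsigns}) the bracket is nonnegative, giving the claimed convexity.

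For the $a\to 0^+$ statement I would use Part~1 to reduce to a check at the two extremes $\sigma=1$ and $\sigma=-1/(d-1)$: a convex (or linear) function on a closed interval is bounded above by the larger of its endpoint values, so if $W_l(a)<0$ at both extremes for small $a$, it is negative throughout. At $\sigma=1$ every $(1-\sigma)$ piece disappears and $W_l(a)=a^4bj_l(a)i_l'(b)-ab^4i_l(b)j_l'(a)$; substituting $j_l(a)\sim c_{l,0}a^l$, $j_l'(a)\sim lc_{l,0}a^{l-1}$ and letting $b\to\sqrt{\tau}$ yields leading behaviour $-l\,c_{l,0}\,\tau^2\,i_l(\sqrt{\tau})\,a^l<0$. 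At $\sigma=-1/(d-1)$ (so $(1-\sigma)(d-1)=d$), the case $l=1$ is cleanest: a leading-order computation gives $W_1(a)\sim -c_{1,0}\tau\,Mi_1(\sqrt{\tau})\,a$, and the identity $Mi_1(b)=b\bigl[bi_1(b)-(1-\sigma)(d-1)i_2(b)\bigr]$ together with $c_{1,k}/c_{2,k}=2k+d+2\geq d+2>d\geq(1-\sigma)(d-1)$ makes every power-series coefficient of $Mi_1$ strictly positive, so $Mi_1(\sqrt{\tau})>0$ and hence $W_1(a)<0$. For $l\geq 2$ at the same endpoint, expanding all four factors to leading order, using $\sqrt{\tau}\,i_l'(\sqrt{\tau})=l\,i_l(\sqrt{\tau})+\sqrt{\tau}\,i_{l+1}(\sqrt{\tau})$, and applying the comparison $b\,i_{l+1}(b)\leq b^2i_l(b)/(d+2l)$ reduces the sign question to an elementary inequality whose excess one can compute explicitly to be $d\,l^2(l-1)(2l+d-2)(d-2)/(d-1)^2+2l^2\tau$, which is strictly positive for $d\geq2$, $l\geq 2$, $\tau>0$.

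The main obstacle I anticipate is this last bound at $\sigma=-1/(d-1)$ for $l\geq 2$: the leading $a^l$ coefficient of $W_l$ is a difference of contributions of opposite sign (from $Vi_l$ and from $Mi_l$), and showing the $Mi_l$-piece dominates rests on the comparison estimate $b\,i_{l+1}(b)\leq b^2 i_l(b)/(d+2l)$. That estimate in turn follows from the identity $c_{l,k}/c_{l+1,k}=2k+d+2l\geq d+2l$ together with positivity of every modified-Bessel power-series coefficient. Once both endpoint bounds are in hand, the convexity/linearity from Part~1 extends the strict negativity of $W_l(a)$ for small $a$ to the full interval $\sigma\in[-1/(d-1),1]$.
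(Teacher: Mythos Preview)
Your argument follows the paper's exactly in architecture: identify $W_l$ as quadratic in $\sigma$, compute the $(1-\sigma)^2$-coefficient $\alpha_j\beta_i-\alpha_i\beta_j=K_l(K_l-(d-1))\bigl[bj_l(a)i_{l+1}(b)+ai_l(b)j_{l+1}(a)\bigr]$ (the paper reaches this by differentiating twice rather than expanding directly, and in fact drops a harmless factor of $2$), deduce linearity for $l\in\{0,1\}$ and convexity for $l\ge2$, and then reduce the small-$a$ sign to the two endpoints $\sigma=1$ and $\sigma=-1/(d-1)$. Your $\sigma=1$ check and your $l=1$ check at the left endpoint via $Mi_1>0$ are the same as the paper's.

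The one genuine difference is the $l\ge2$ case at $\sigma=-1/(d-1)$. The paper rewrites the leading $a^l$-coefficient using the recurrence for $i_{l+1}'$ and displays it as a sum of two manifestly negative terms (coefficients times $i_l(\sqrt\tau)$ and $i_{l+1}'(\sqrt\tau)$). You instead bound $\sqrt\tau\,i_{l+1}(\sqrt\tau)\le \tau\,i_l(\sqrt\tau)/(d+2l)$ and reduce to a polynomial inequality. That route is valid, but your stated ``excess'' is not the exact value: carrying out your computation gives, after multiplying by $(d+2l)/\tau$,
\[
2l^2\tau \;+\; \frac{d\,l^2(l-1)}{(d-1)^2}\bigl((3d-4)l+2d^2-5d+4\bigr),
\]
which exceeds your expression $2l^2\tau+dl^2(l-1)(2l+d-2)(d-2)/(d-1)^2$ by $d^2l^2(l-1)(l+d-1)/(d-1)^2>0$. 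Since your stated quantity is therefore a lower bound and already positive, the sign conclusion survives; but the formula itself should be corrected. The paper's identity-based rewriting avoids this bounding step and yields a cleaner closed form.
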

\begin{proof} First, we shall establish convexity or linearity of $W_l(a)$ as a function of $\sigma$. Treating $a, \tau$, and $\sigma$ all as independent variables, we differentiate twice with respect to $\sigma$, obtaining:
\begin{align*}
\frac{\partial^2}{\partial\sigma^2}W_l(a)&=2K_l\Big(a^2j_l^{\prime\prime}(a)+a^2j_l(a)\Big)\Big(bi_l(b)-i_l(b)\Big)\\
&\qquad+2K_l\Big(-b^2i_l^{\prime\prime}(b)+b^2i_l(b)\Big)\Big(aj_l(a)-j_l(a)\Big)\\
&=2K_l\Big(K_lj_l(a)-(d-1)aj_l^\prime(a)\Big)\Big(bi_l(b)-i_l(b)\Big)\\
&\qquad+2K_l\Big(-K_li_l(b)+(d-1)bi_l^\prime(b)\Big)\Big(aj_l(a)-j_l(a)\Big)\\
&=K_l(K_l-(d-1))\Big(j_l(a)bi_{l+1}(b)+i_l(b)a j_{l+1}(a)\Big).
\end{align*}
Since $K_l=l(l+d-2)$, we have that $K_0=0$ and $K_l=(d-1)$, so this quantity vanishes for those two indices. Otherwise, $K_l>(d-1)$ and so by our knowledge of signs of Bessel functions from Lemma~\ref{Besselsigns}, the above is positive for indices $l\geq2$. Hence $W_l(a)$ is linear in $\sigma$ for $l=0,1$ and convex in $\sigma$ for $l\geq2$, as desired.

Next, we establish the negativity of $W_l(a)$ as $a\to0^+$. By the series expansions of Bessel functions, we have that as $a\to0^+$,
\begin{align*}
 j_l(a)&=c_{l,0}a^l-c_{l,1}a^{l+2}+\BigO(a^{l+4})\\
 j_l(a)-aj_l'(a)&=-(l-1)c_{l,0}a^l+(l+1)c_{l,1}a^{l+2}+\BigO(a^{l+4})\\
 a^2j_l^{\prime\prime}(a)&=l(l-1)c_{l,0}a^l-(l+2)(l+1)c_{l,1}a^{l+2}+\BigO(a^{l+4}),
\end{align*}
and since $b=\sqrt{a^2+\tau}$, we have that as $a\to0^+$,
\begin{align*}
 i_l(b)&=i_l(\sqrt{\tau})+\BigO(a^2),\\
 bi_l^\prime(b)&=\sqrt{\tau}i_l^\prime(\sqrt{\tau})+\BigO(a^2),\\
  b^2 i_l^{\prime\prime}(b)&=\tau i_l^{\prime\prime}(\sqrt{\tau})+\BigO(a^2).
\end{align*}
Thus for small $a$ values,
\begin{align*}
 Mj_l(a)&=(1-\sigma)l(l-1)c_{l,0}a^l+\BigO(a^{l+2}),\\
 Vj_l(a)&=(l\tau +(1-\sigma)K_l(l-1))c_{l,0}a^l+\BigO(a^{l+2}),\\
 Mi_l(b)&=Mi_l(\sqrt{\tau})+\BigO(a^{2}),\\
 Vi_l(b)&=(1-\sigma)K_l\Big(\sqrt{\tau}i_l'(\sqrt{\tau})-i_l(\sqrt{\tau})\Big)+\BigO(a^2),
\end{align*}
and so as $a\to0^+$
\begin{align*}
 W_l(a)&=
 (1-\sigma)l(l-1)(1-\sigma)K_l\Big(\sqrt{\tau}i_1'(\sqrt{\tau})-i_1(\sqrt{\tau})\Big)c_{l,0}a^l,\\
 &\qquad-Mi_l(\sqrt{\tau})(l\tau +(1-\sigma)K_l(l-1))c_{l,0}a^l+\BigO(a^{l+2}).
\end{align*}
Since $W_l(a)$ is either linear or convex in $\sigma$, it will be maximized at one of the two extreme values of $\sigma$. Hence it suffices to show $W_l(a)<0$ as $a\to0^+$ for both $\sigma=1$ and $\sigma=-1/(d-1)$.

If $\sigma=1$, then $Mi_l(z)=-z^2 i_l(z)$ and so our bound on $W_l$ simplifies to:
\[
 W_l(a)=-\tau^2 i_l(\sqrt{\tau})l c_{l,0}a^l+\BigO(a^{l+2}).
\]
Since $a^l$ and $c_{l,0}$ are both nonnegative, by positivity of the Bessel $i_l$ functions, $W_l(a)<0$ for sufficiently small $a$.

If $\sigma=1$, then $1-\sigma=d/(d-1)$ and we can write \\$(d-1)Mi_l(z)=dz^2 i_l^{\prime\prime}(z)-z^2i_l(z)$, and our bound on $W_l$ simplifies to:
\begin{align*}
 (d-&1)^2W_l(a)=
 d^2l(l-1)K_l\Big(\sqrt{\tau}i_1'(\sqrt{\tau})-i_1(\sqrt{\tau})\Big)c_{l,0}a^l\\
 &\qquad-\Big(d\tau i_l^{\prime\prime}(\sqrt{\tau})-\tau i_l(\sqrt{\tau})\Big)(l\tau(d-1)+dK_l(l-1))c_{l,0}a^l+\BigO(a^{l+2}).
\end{align*} 
Note $c_{l,0}$ is positive and so does not affect the sign. The coefficient of the $c_{l,0}a^l$ terms above can be rewritten using Bessel identities, yielding: 
\begin{align*}
&\frac{d^2l(l-1)K_l}{l+d-1}\Big(((l-1)(l+d-1)+\tau)i_l(\sqrt{\tau})-\tau i_{l+1}^\prime(\sqrt{\tau})\Big)\\
 &\quad-\frac{l\tau(d-1)+d(l-1)K_l}{l+d-1}\Big(dl(l-1)(l+d-1)+(d-1)(l-1)\tau\Big)i_l(\sqrt{\tau})\\
 &\quad-\frac{l\tau(d-1)+d(l-1)K_l}{l+d-1}d(d-1)\tau i_{l+1}^\prime(\sqrt{\tau})\\
 &=-\frac{l(l-1)(d-1)^2\tau^2+(d-1)d(l-1)^2(l+d-1)\tau}{l+d-1} i_l(\sqrt{\tau})\\
 &\quad-\frac{dl(d-1)^2\tau^2+d^2(l-1)K_l\tau}{l+d-1}i_{l+1}^\prime(\sqrt{\tau}),
\end{align*}
which is negative for all $l,d$, and $\tau$ under consideration. Thus for sufficiently small values of $a$, we have $W_l(a)<0$ when $\sigma=-1/(d-1)$, as desired.
\end{proof}

\subsection*{Evidence for the conjecture.}
We now discuss the body of evidence for our conjecture.

When $\sigma=0$, it has already been proved that the fundamental mode of the ball corresponds to index $l=1$ and has simple angular dependence \cite[Theorem 3]{besselpaper} and so the conjecture is true in this case.

\subsubsection*{Limiting cases} The limiting case of zero tension. By Lemma~\ref{wbounds}, we have for positive tension that $\tau\mu_1\leq\omega_1(\tau,\sigma)\leq\tau(d+2)$; taking the limit as $\tau\to0^+$ gives us that $\omega_1(\tau,\sigma)\to0$ in this limit. 
 
When $\tau=0$, the differential eigenvalue equation becomes $\Delta^2u =\omega u$ and constant and linear functions are all eigenfunctions, and so the eigenvalue $\omega=0$ has $d+1$ multiplicity. By taking the spanning set to be $u=\text{const}$ and $u=x_k$, $k=1,\dots,d$, we see the nonconstant eigenfunctions have simple angular dependence, which would be the limiting case of eigenfunctions of the form $R(r)Y_1(\thetahat)$.
  
The limiting case of infinite tension (zero rigidity). We saw in Corollary~\ref{limwt} that $\omega_1/\tau$ approaches the fundamental tone of the free membrane as $\tau\to\infty$; the fundamental modes of the free membrane are of the form $u(r,\thetahat)=j_1(\pll r/R)Y_1(\thetahat)$. 
 
From Proposition~\ref{fundmode}, the radial part of the fundamental mode can be written as $j_l(ar)+\gamma i_l(br)$ with $\gamma=-Mj_l/Mi_l$; then in the limit as $\tau\to\infty$, $\gamma\to0$ and so we obtain eigenfunctions of the form $j_l(ar)Y_l(\thetahat)$. Taking $l=1$ would give us agreement with the free membrane.
 
 \subsubsection*{Numerical evidence}
One consequence of Lemma~\ref{Wlprops} is that it allows us to reduce the number of parameters we need to consider when numerically verifying that $l=1$ gives us the fundamental mode. The convexity in $\sigma$ means that for any fixed $a$, the value of $W_l(a)$ is maximal at either $\sigma=-1/(d-1)$ or $\sigma=1$. Because $W_l(0)=0$ and is decreasing and negative for small $a$, we may conclude that for a fixed $l$, the smallest first root $a_l^*$ of $W_l$ occurs at either $\sigma=-1/(d-1)$ or $\sigma=1$. On the other hand, because $W_1(a)$ is linear in $\sigma$, the \emph{largest} value of the first root $a_1^*$ occurs at either $\sigma=-1/(d-1)$ or $\sigma=1$.

Thus if we can show that the for any fixed dimension $d$, any $\tau>0$, and any index $l\geq2$, the first roots of $W_1(a,\sigma=1)$ and $W_1(a,\sigma=-1/(d-1))$ are smaller than those of $W_l(a,\sigma=1)$ and $W_l(a,\sigma=-1/(d-1))$, then we will have proved that the lowest positive eigenvalue does correspond to the index $l=1$.

Because of the sheer complexity of the $W_l(a)$ functions it does not seem to be possible to prove this directly. However, it is easy to verify this numerically for any choice of $d$, $l$, and $\tau$. 
Numerical investigations suggest that for any dimension $d\geq2$, any tension $\tau>0$ and any $l\geq2$, the function $W_l(a,\sigma=-1/(d-1))<0$ for $a\in(0,\pll)$ and that it suffices to consider the $\sigma=1$ case for $l\geq 2$. This is demonstrated in Figure~\ref{negWL}. The graphs were generated using LogLogPlot in Mathematica; we provide a source file on the ArXiv.
\begin{figure}[h!]\label{negWL}
 \begin{center}
  \includegraphics[width=2.45in]{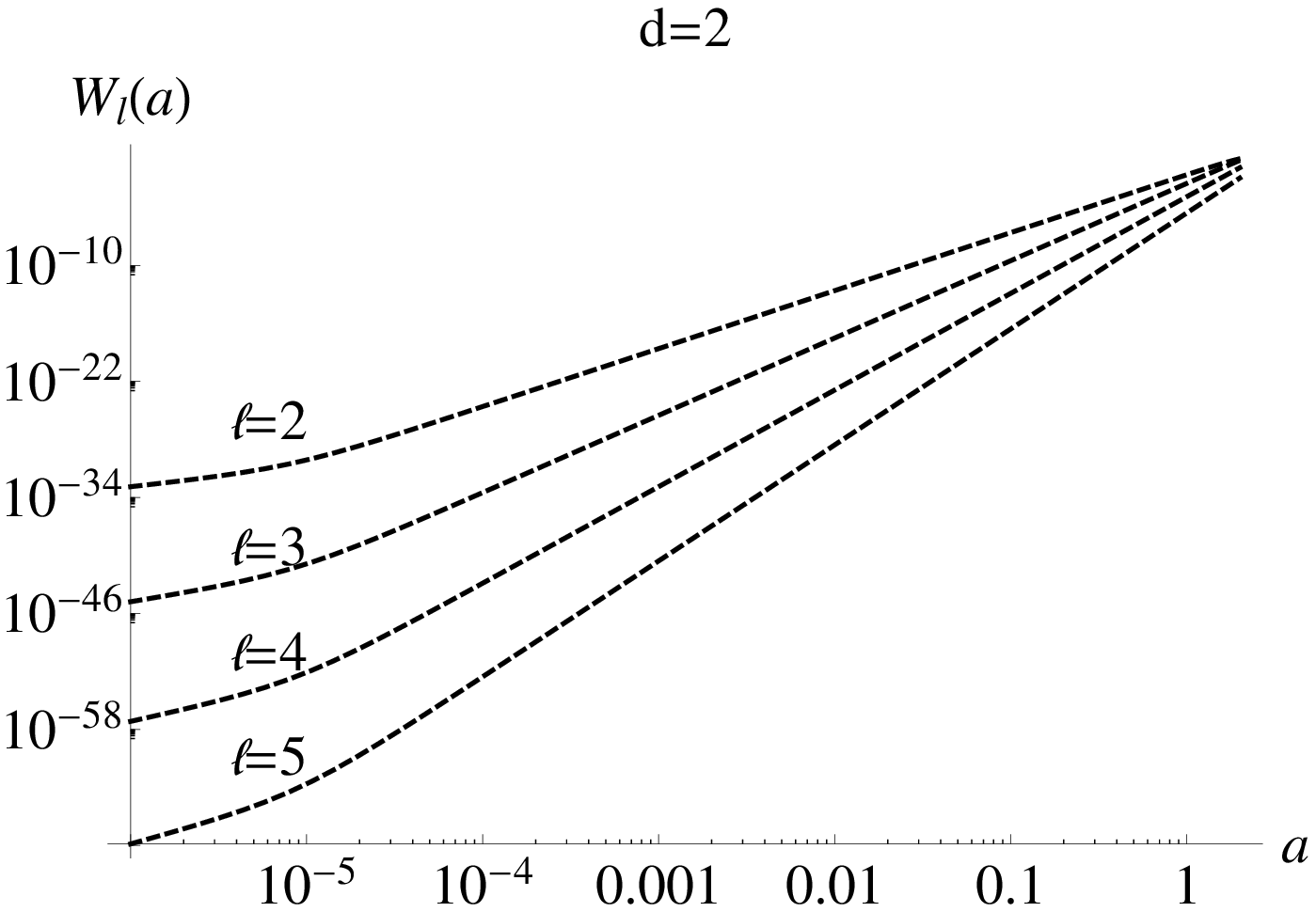}
   \includegraphics[width=2.35in]{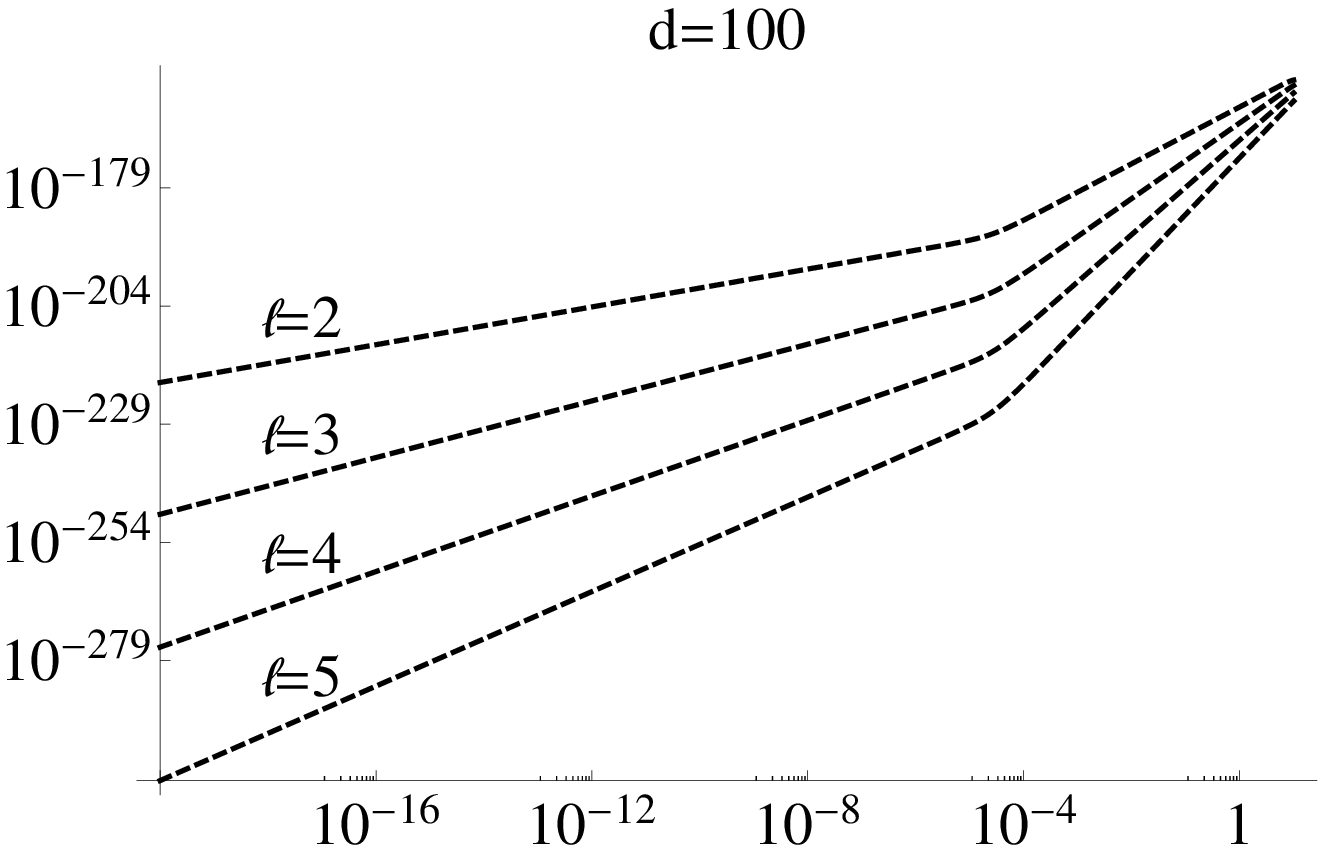}
   \caption{Log-log plots of $-W_l(a)$ for indices $l=2,3,4,5$ and $\tau=1$ for dimensions $d=2$ (top) and $d=100$ (bottom) for $a\in[10^{-20},\sqrt{d+2}]$ and $\tau=10^{-10}$.  Solid curves are $l=2$; dashed are $l=3$, dotted are $l=4$, and dash-dotted are $l=5$. From these images we see that $-W_l(a)$ remains positive even for very small $\tau$ and $a$ values.}
 \end{center}
\end{figure}

Figure~\ref{wrootspic} shows the roots $a_l^*$ as functions of $\tau$ for various dimensions $d$ and indices $l$. These images were produced in Mathematica using ContourPlot and LogLogPlot; we provide a source file on the ArXiv. The thick lines correspond to the roots $a$ of $W_1(a,\sigma=1)$ and $W_1(a,\sigma=-1/(d-1))$ for various dimensions, while the thinner lines are those of higher indices. Log-log plots are used for small values of $\tau$ so that the separation between the curves is more apparent.

\begin{figure}\label{wrootspic}
 \includegraphics[scale=.5]{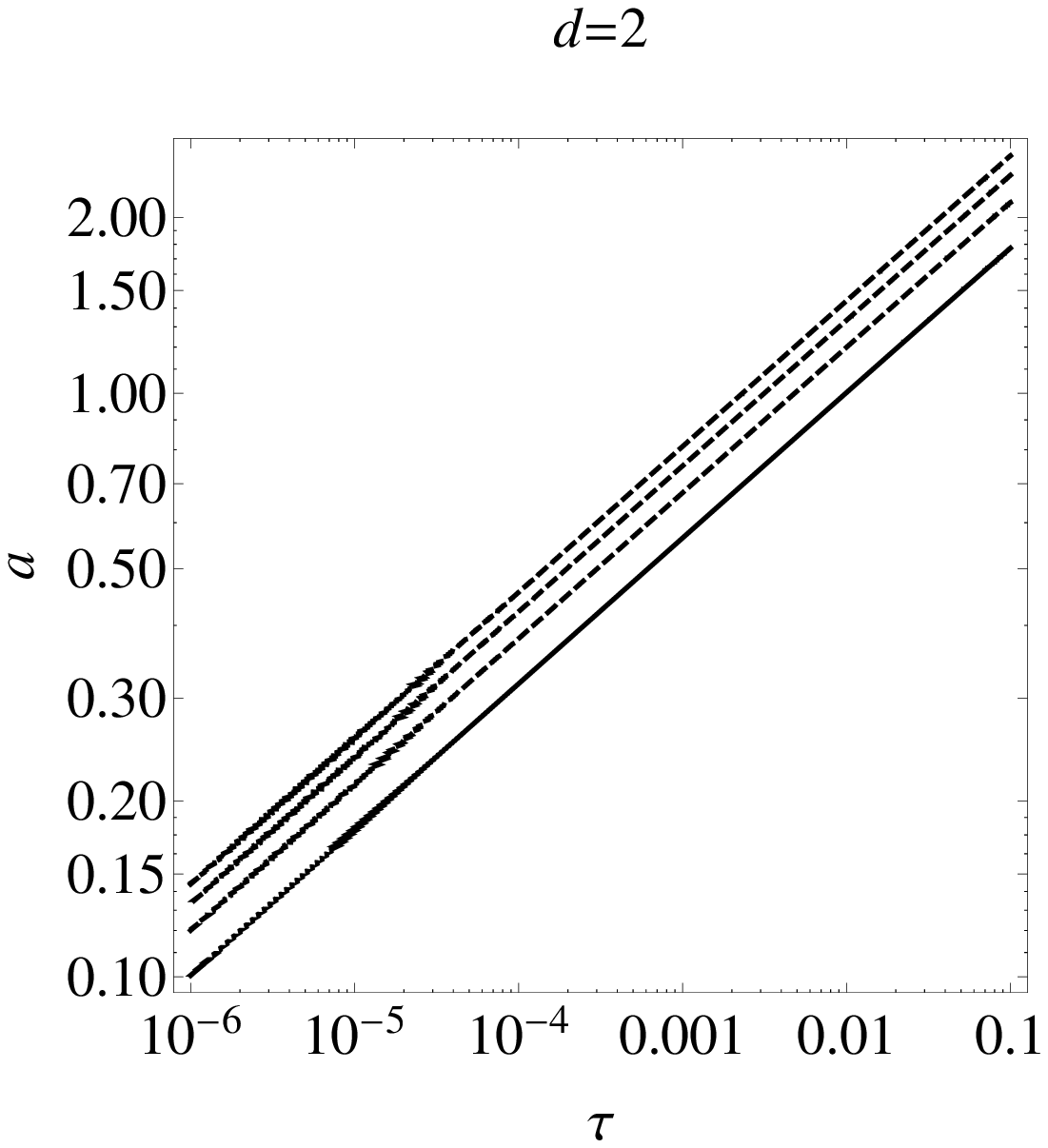}\quad\includegraphics[scale=.6]{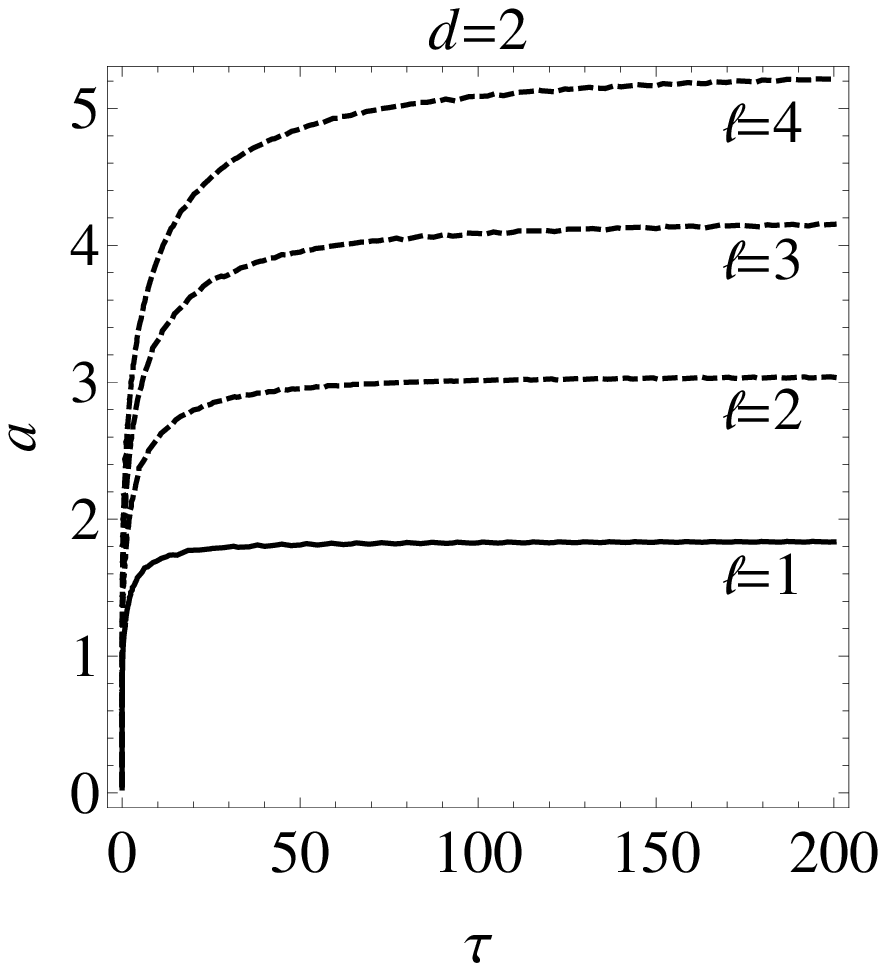}
\\
 \includegraphics[scale=.5]{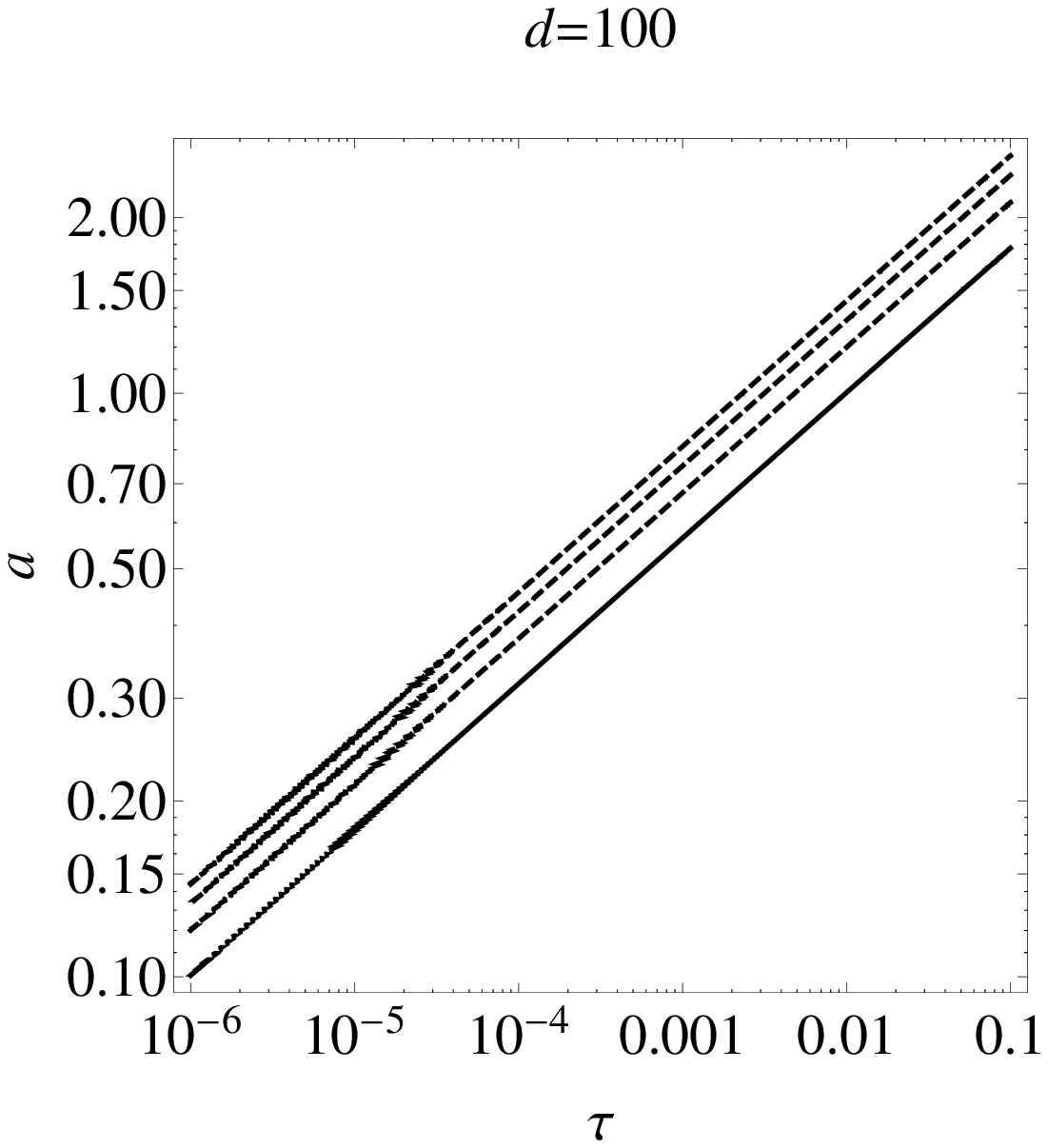}\quad\includegraphics[scale=.6]{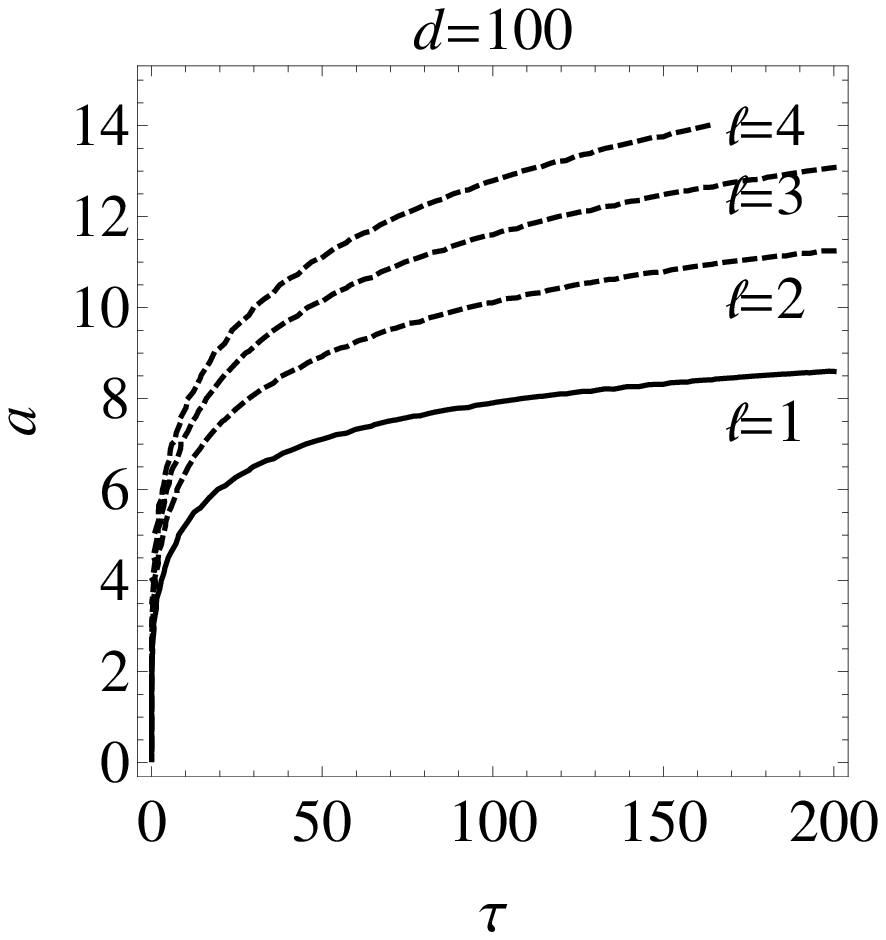}
  \caption{Graph of the first nontrivial root $a$ of $W_l(a)$ as functions of $\tau$ for extremal $\sigma$ values for dimensions $d=2$ and $d=100$. In each image, the solid curve corresponds to $l=1$ with $\sigma=0$; the dashed curves are $l=2,3,4$ with $\sigma=1$.}
\end{figure}

We end with a final useful inequality relating $\tau$ and $a$:
\begin{lemma}\label{atbounds}
For all dimensions $d\geq 2$ and all values $\tau>0$ and $\sigma\in(-1/(d-1),1)$, if $a$ is as in \ref{eigfc}, we have 
\[
 \frac{a^4}{d+2-a^2}\leq\tau.
\]
\end{lemma}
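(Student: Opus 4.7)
The plan is to combine the upper bound on $\omega_1$ from Lemma~\ref{wbounds} with the algebraic identity relating $a$, $b$, and $\omega_1$ furnished by Proposition~\ref{eigfc}. Because the lemma is about the ball, and $a$ in Proposition~\ref{eigfc} is defined via the eigenvalues of the ball, the two results plug directly into each other.

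First I would recall from Proposition~\ref{eigfc}, applied to the unit ball $R=1$, that $b^2 - a^2 = \tau$ and $\omega_1 = a^2 b^2$, so
\[
\omega_1 = a^2(a^2 + \tau) = a^4 + a^2\tau.
\]
Next, invoking the upper bound in \eqref{omegabounds}, which holds precisely for the unit ball, gives $\omega_1 \leq \tau(d+2)$, and so
\[
a^4 + a^2\tau \leq \tau(d+2),
\qquad\text{i.e.,}\qquad
a^4 \leq \tau(d+2 - a^2).
\]

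The only (very minor) subtlety is dividing by $d+2 - a^2$, so I would verify positivity of this factor. If instead $a^2 \geq d+2$, then the displayed inequality forces $a^4 \leq 0$, hence $a=0$; but then $\omega_1 = a^4 + a^2\tau = 0$, contradicting the lower bound $\omega_1 \geq \tau \mu_1 > 0$ in Lemma~\ref{wbounds} (the free membrane fundamental tone $\mu_1$ on the ball is strictly positive). Therefore $a^2 < d+2$, we may divide, and the conclusion $a^4/(d+2 - a^2) \leq \tau$ follows. There is no main obstacle here — the lemma is essentially a repackaging of the trial‑function upper bound from Lemma~\ref{wbounds} in the variable $a$ that will be most convenient for the monotonicity arguments in later sections.
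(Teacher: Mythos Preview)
Your argument is correct and matches the paper's own proof, which simply substitutes $\omega_1=a^2(a^2+\tau)$ into the upper bound $\omega_1\le\tau(d+2)$ from Lemma~\ref{wbounds} and solves for $\tau$. Your extra check that $d+2-a^2>0$ (ruling out $a=0$ via the lower bound $\omega_1\ge\tau\mu_1>0$) is a welcome detail the paper glosses over.
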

\begin{proof} This follows directly from writing $\omega_1=a^2(a^2+\tau)$ in the lower bound from Lemma~\ref{wbounds} and solving the inequality for $\tau$.
\end{proof}

\section{ Trial functions}\label{trialfcnsec}
Because we are using a trial function argument to prove Theorem~\ref{mainthm}, we will need to define these functions and establish some properties that will be useful for proving our main theorem.

Proceeding from our assumption that $l=1$ corresponds to the fundamental mode for the unit ball, we write $R(r)=j_1(ar)+\gamma i_1(br)$ for the radial portion of any fundamental mode eigenfunction.

\begin{lemma}\label{trialfcn} (Trial functions) Let the radial function $\rho$ be given by the function $R$, extended linearly. That is,
\[
 \rho(r)=\begin{cases} R(r) &\text{when $0\leq r\leq1$,}\\
       R(1)+(r-1)R'(1)&\text{when $r\geq1$.}\\
      \end{cases}
\]
After translating $\Omega$ suitably, the functions $u_k = x_k\rho(r)/r$, for  $k=1,\dots,d$, are valid trial functions for the fundamental tone.
\end{lemma}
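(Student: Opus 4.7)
The plan is to verify two things: \textbf{(a)} each candidate $u_k=x_k\rho(r)/r$ belongs to the form domain $H^2(\Omega)$; and \textbf{(b)} a translation of $\Omega$ can be chosen so that $\int_\Omega u_k\,dx=0$ for every $k=1,\dots,d$, which makes the $u_k$ admissible in the Rayleigh--Ritz characterization of $\omega_1(\Omega)$.

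For \textbf{(a)}, I would examine the three regions $r<1$, $r=1$, and $r>1$ separately. Near the origin, the power series of $j_1$ and $i_1$ from Section~\ref{Besselsec} give $R(r)=j_1(ar)+\gamma i_1(br)=r\cdot g(r^2)$ for some entire function $g$, so $R(r)/r$ is a smooth even function of $r$; multiplying by the linear factor $x_k$ yields a $C^\infty$ function in a neighborhood of the origin. For $r\geq 1$, $\rho$ is affine by construction, so $x_k\rho(r)/r$ is smooth on $\{r>0\}$. At the matching sphere $r=1$, the linear extension is arranged precisely so that $\rho(1)=R(1)$ and $\rp(1)=R^\prime(1)$, making $u_k\in C^1$ there; the only possible loss of regularity is a jump in $\rpp$ across $r=1$, which leaves all second derivatives of $u_k$ bounded, and hence $u_k\in H^2(\Omega)$.

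For \textbf{(b)}, I would use the standard Weinberger-style translation argument. Define a radial antiderivative $\Psi$ by $\Psi^\prime(r)=-\rho(r)$ and consider
\[
\Phi(y)=\int_\Omega \Psi(|x-y|)\,dx, \qquad y\in\RR^d.
\]
Differentiating under the integral yields $\partial_{y_k}\Phi(y)=\int_\Omega \frac{x_k-y_k}{|x-y|}\rho(|x-y|)\,dx$, so a critical point $y_0$ of $\Phi$ is precisely a translation for which the $d$ orthogonality conditions $\int_\Omega u_k\,dx=0$ all hold simultaneously, after replacing $\Omega$ by $\Omega-y_0$. Since $\rho(r)$ grows linearly in $r$ as $r\to\infty$, $\Psi$ has quadratic growth; with the sign of the $r^2$ term controlled, $\Phi$ is coercive on $\RR^d$, so it attains an interior extremum at some $y_0$, which is the required translation.

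The main obstacle I anticipate is not the topological step itself but rather pinning down the sign and non-degeneracy of $R^\prime(1)$, which controls the quadratic growth of $\Psi$ at infinity and hence the coercivity of $\Phi$. A robust fallback, should $R^\prime(1)$ misbehave in some limiting regime (e.g.\ as $\tau\to\infty$, where $\rho$ approaches a membrane mode with $j_1^\prime(\pll)=0$), is to apply Brouwer's fixed-point theorem directly to the map $y\mapsto y-\varepsilon\int_\Omega (x-y)\rho(|x-y|)/|x-y|\,dx$ on a sufficiently large closed ball for small $\varepsilon>0$, using boundedness of $\Omega$ to keep the perturbation small; this version yields the required $y_0$ without any hypothesis on $R^\prime(1)$.
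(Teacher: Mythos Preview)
Your proposal is correct and tracks the paper's argument. For (a) you are more careful than the paper, which only checks the potential singularity at the origin via the series for $j_1$ and $i_1$ and leaves the $C^1$ matching at $r=1$ implicit. For (b) the paper simply invokes Brouwer's fixed-point theorem by citing \cite[Lemma~13]{inequalitypaper}; your primary variational route---locate an extremum of $\Phi(y)=\int_\Omega\Psi(|x-y|)\,dx$---is a legitimate and slightly more direct alternative. It works here because $\rho>0$ and $R'(1)>0$ (the latter is established later as Lemma~\ref{mondenom}), so $\Phi(y)\to-\infty$ and attains a global \emph{maximum}; note that with your convention $\Psi'=-\rho$ the functional is anti-coercive rather than coercive. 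One further slip in your Brouwer fallback: since $\rho$ grows linearly, the perturbation $\varepsilon V(y)$ is not uniformly small on large balls, so ``small $\varepsilon$'' alone does not force the map to be a self-map; the standard fix (and what the cited argument actually uses) is that $V(y)=\int_\Omega(x-y)\rho(|x-y|)/|x-y|\,dx$ points toward $\Omega$ on any sphere enclosing $\Omega$, whence an inward-pointing-field version of Brouwer yields the zero. Neither slip affects the substance of your plan.
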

\begin{proof} By construction, we have $x_k\rho(r)$ is continuous; then the functions $u_k$ are in $H^2(\Omega)$ provided there is no singularity introduced at the origin when we divide by $r$. By series expansions of $j_1(ar)$ and $i_1(br)$, we have $\rho(r)~c_{1,0}(a+\gamma b)r$ as $r\to0^+$, and so $R(r)/r\to0$ as $r\to0^+$. 

The $u_k$ must also be orthogonal to a constant function in order to be admissible trial functions. To achieve this we use ``center of mass'' coordinates so that $\int_\Omega u_k\,dx=0$ for $k=1,\dots,d$. This argument relies on the Brouwer Fixed Point Theorem and is identical to that in the proof of \cite[Lemma 13]{inequalitypaper}. 
\end{proof}

In the remainder of this section, we will establish several facts about the behavior of $\rho(r)$ along with bounds on the constant $\gamma$. For convenience, we write $\delr:=\rpp-\frac{d-1}{r^2}(\rho-r\rp)$. Note that $\Delta(\rho Y_1)=\delr Y_1$, so $\delr$ can be thought of as the radial part of the Laplacian.

\begin{lemma} \label{gammapos} Fix dimension $d\geq$ and constants $a\in(0,\pll)$ and $\tau>0$. Then $\gamma$ is increasing function of $\sigma$.

Furthermore, for $\sigma\in[-1/(d-1),1]$, the constant $\gamma$ satisfies the bounds 
\[
 0\leq\frac{a^2j_2'(a)}{b^2i_2'(b)}\leq\gamma\leq \frac{a^3}{b^3}<1,
\]
and when $\sigma\geq0$ we also have
\[
 \frac{-a^2\jpp(a)}{b^2\ipp(b)}\leq\gamma.
\]
\end{lemma}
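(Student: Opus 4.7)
My plan is to use the Bessel recurrences in Lemma~\ref{Besselprops} to rewrite $\gamma$ in two analytically complementary forms. Applying $j_1''(z) = -j_1(z) + \tfrac{d-1}{z} j_2(z)$ and $i_1''(z) = i_1(z) - \tfrac{d-1}{z} i_2(z)$ to the definitions $-Mj_1(a) = a^2[\sigma j_1(a) - (1-\sigma) j_1''(a)]$ and $Mi_1(b) = b^2[(1-\sigma) i_1''(b) + \sigma i_1(b)]$ gives
\[
\gamma = \frac{a\bigl(a j_1(a) - K j_2(a)\bigr)}{b\bigl(b i_1(b) - K i_2(b)\bigr)}, \qquad K := (1-\sigma)(d-1) \in [0,d],
\]
and a further application of the identities $z j_1(z) = z j_2'(z) + d j_2(z)$ and $z i_1(z) = z i_2'(z) + d i_2(z)$ yields
\[
\gamma = \frac{a^2 j_2'(a) + a(d-K) j_2(a)}{b^2 i_2'(b) + b(d-K) i_2(b)}, \qquad d-K = 1 + (d-1)\sigma \in [0,d].
\]
Since $j_2(a), j_2'(a) > 0$ on $(0, \pll]$ by Lemma~\ref{Besselsigns} and all modified Bessel quantities are positive, both denominators stay positive across the full parameter range, and $\gamma \ge 0$ is immediate from the second form.

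For the upper bound, cross-multiplying $\gamma \le a^3/b^3$ and substituting $z j_1 = z j_2' + d j_2$, $z i_1 = z i_2' + d i_2$ rearranges the inequality to
\[
ab\bigl[a i_2'(b) - b j_2'(a)\bigr] + (d-K)\bigl[a^2 i_2(b) - b^2 j_2(a)\bigr] \ge 0.
\]
Both bracketed factors are non-negative: term-wise series domination gives $j_2'(a) \le i_2'(a)$ and $j_2(a) \le i_2(a)$, while $i_2'(z)/z = \sum (2k+2) c_{2,k} z^{2k}$ and $i_2(z)/z^2 = \sum c_{2,k} z^{2k}$ have non-negative coefficients and are therefore non-decreasing, giving $b j_2'(a) \le b i_2'(a) \le a i_2'(b)$ and $b^2 j_2(a) \le b^2 i_2(a) \le a^2 i_2(b)$; the factor $d-K$ is non-negative. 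The strict bound $a^3/b^3 < 1$ is immediate from $b^2 = a^2 + \tau > a^2$. For the lower bound and the monotonicity in $\sigma$, the second form reduces both claims to the single inequality $\frac{a j_2'(a)}{j_2(a)} \le \frac{b i_2'(b)}{i_2(b)}$: cross-multiplying $\gamma \ge a^2 j_2'(a)/(b^2 i_2'(b))$ produces $(d-K)\,ab\,[b j_2(a) i_2'(b) - a j_2'(a) i_2(b)] \ge 0$, and direct differentiation yields
\[
\frac{d\gamma}{d\sigma} = \frac{(d-1)\,ab\,\bigl[b j_2(a) i_2'(b) - a j_2'(a) i_2(b)\bigr]}{\bigl(b^2 i_2'(b) + b(d-K) i_2(b)\bigr)^2}.
\]
Finally, at $\sigma = 0$ (so $K = d-1$) the first form combined with the $j_1''$ and $i_1''$ identities shows $\gamma|_{\sigma=0} = -a^2 j_1''(a)/(b^2 i_1''(b))$, so the extra lower bound for $\sigma \ge 0$ is the monotonicity statement $\gamma(\sigma) \ge \gamma(0)$.

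The main obstacle is the log-derivative inequality $\frac{a j_2'(a)}{j_2(a)} \le \frac{b i_2'(b)}{i_2(b)}$. I would prove it in two steps. First, $j_2'(a)/j_2(a) < i_2'(a)/i_2(a)$: setting $W(z) := i_2'(z) j_2(z) - i_2(z) j_2'(z)$, the Bessel ODEs for $j_2$ and $i_2$ combine to give $(z^{d-1} W(z))' = 2 z^{d-1} i_2(z) j_2(z)$, and since $j_2 > 0$ on $(0, \pll]$ by Lemma~\ref{Besselsigns}, integrating from $0$ shows $W(a) > 0$. Second, $z \mapsto z i_2'(z)/i_2(z)$ is non-decreasing: writing $i_2(z) = z^2 \phi(z)$ with $\phi(z) = \sum_{k \ge 0} c_{2,k} z^{2k}$ gives $z i_2'/i_2 = 2 + z \phi'(z)/\phi(z)$, and interpreting $p_k(z) \propto c_{2,k} z^{2k}$ as a probability measure on $\{0,1,2,\ldots\}$ makes $z\phi'/\phi = \mathbb{E}_z[2k]$, whose derivative computes to $\mathrm{Var}_z(2k)/z \ge 0$. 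Chaining the two steps and using $a \le b$ then delivers the inequality.
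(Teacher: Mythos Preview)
Your proof is correct, but it takes a genuinely different route from the paper's.

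The paper establishes monotonicity first and then reads off every bound as an endpoint value. Differentiating $\gamma=-Mj_1(a)/Mi_1(b)$ directly and applying two rounds of the recurrences in Lemma~\ref{Besselprops} collapses the numerator of $\partial\gamma/\partial\sigma$ to
\[
\frac{d-1}{d+2}\,a^2b^2\bigl(j_3(a)\,i_1(b)+j_1(a)\,i_3(b)\bigr),
\]
which is manifestly nonnegative by Lemma~\ref{Besselsigns}. Once $\gamma$ is increasing in $\sigma$, the bounds are just $\gamma(-1/(d-1))=a^2j_2'(a)/b^2i_2'(b)$, $\gamma(0)=-a^2j_1''(a)/b^2i_1''(b)$, and $\gamma(1)=a^2j_1(a)/b^2i_1(b)\le a^3/b^3$ (this last step via Lemma~\ref{bblem}).

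Your approach inverts the logic: the rewritten form $\gamma=\dfrac{a^2j_2'(a)+a(d-K)j_2(a)}{b^2i_2'(b)+b(d-K)i_2(b)}$ makes $\gamma\ge 0$ immediate, and then both the lower bound and the monotonicity reduce to the single log-derivative inequality $a\,j_2'(a)/j_2(a)\le b\,i_2'(b)/i_2(b)$, which you dispatch with a Wronskian ODE computation plus a variance/log-convexity argument for $z\,i_2'(z)/i_2(z)$. Your upper bound is proved directly from series comparisons rather than by invoking Lemma~\ref{bblem}.

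The paper's route is shorter---one Bessel-identity simplification and the bounds come for free---but your structural form for $\gamma$ is attractive in its own right, and your Wronskian and variance arguments are more portable (they would adapt to other indices $l$ without needing a closed-form positive expression like $j_3 i_1+j_1 i_3$). Both proofs ultimately rest on the same sign facts from Lemma~\ref{Besselsigns}.
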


\begin{proof} We compute the derivative directly and simplify:
\[
\frac{\partial\gamma}{\partial \sigma}=\frac{a^2j_1(a)b^2\ipp(b)+a^2\jpp(a)b^2i_1(b)}{((1-\sigma)b^2\ipp(b)+\sigma b^2i_1(b))^2}.
\]
The denominator is always nonnegative, so the sign of $\partial\gamma/\partial\sigma$ is determined by the numerator. Using identities from Lemma~\ref{Besselprops} first to rewrite $\jpp$ and $\ipp$, and then to rewrite $j_2$ and $i_2$ in terms of $j_1, j_3, i_1$, and $i_3$, we have
\begin{align*}
 a^2j_1(a)&b^2i_1''(b)+a^2j_1''(a)b^2i_1(b)\\
 &=(d-1)\Big(aj_2(a)b^2i_1(b)-a^2j_1(a)bi_2(b)\Big)\\
 &=\frac{d-1}{d+2}\Big(a^2(j_1(a)+j_3(a))b^2i_1(b)-a^2j_1(a)b^2(i_1(b)-i_3(b))\Big)\\
 &=\frac{d-1}{d+2}a^2b^2\Big(j_3(a)i_1(b)+j_1(a)i_3(b)\Big),
\end{align*}
which is nonnegative for $a\in[0,p_{1,1}]$ by our knowledge of signs of Bessel functions from Lemma~\ref{Besselsigns}. Thus $\gamma$ is increasing in $\sigma$ for fixed $a$ and $\tau$, as desired.

Now we note that for $\sigma\in(-1/(d-1),1)$, we have
\[
 \gamma(\sigma)\leq \gamma(1)=\frac{a^2j_1(a)}{b^2i_1(b)}\leq\frac{a^3}{b^3} 
\]
with the last inequality by Lemma~\ref{bblem}.

Similarly, we obtain
\begin{align*}
 \gamma(\sigma)&\geq \gamma\left(-\frac{1}{d-1}\right)=\frac{-da^2\jpp(a)-a^2j_1(a)}{db^2\ipp(b)-b^2i_1(b)}\\
 &=\frac{(d-1)a^2j_1(a)-d(d-1)aj_2(a)}{(d-1)b^2i_1(b)-d(d-1)bi_2(b)}=\frac{a^2j_2'(a)}{b^2 i_2'(b)},
\end{align*}
again using Bessel identities from Lemma~\ref{Besselprops} to simplify. This quotient is positive, so we have proved $\gamma\geq0$ as desired.

Finally, when $\sigma\geq0$, we observe
\[
 \gamma(\sigma)\geq\gamma(0)=\frac{-a^2\jpp(a)}{b^2\ipp(b)}.\qedhere
\]
\end{proof}

Now that we have bounds on $\gamma$, we may use these to establish some useful properties of our trial function $\rho$ and its derivatives.

\begin{lemma} \label{rhoproperties} For all $\tau>0$ and $\sigma\in(-1/(d-1),1)$, there exists a unique $\rstar\in(0,1]$ such that $\rpp<0$ on $(0,\rstar)$ and $\rpp\geq0$ on $[\rstar,1]$.

Additionally, we have that $\rho-r\rp\geq0$, $\tau\rp-(\delr)_r\geq0$ and $\delr\rho\leq0$ for all $r\in[0,1]$.
\end{lemma}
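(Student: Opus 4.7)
The plan is to exploit the explicit representation $\rho(r) = j_1(ar) + \gamma i_1(br)$ on $[0,1]$ together with the bounds on $\gamma$ from Lemma~\ref{gammapos}, the recurrences and sign information of Lemmas~\ref{Besselprops}--\ref{Besselsigns}, and the $M$- and $V$-boundary conditions at $r = 1$. Because $j_1(ar) Y_1$ solves $(\Delta+a^2)u = 0$ and $i_1(br)Y_1$ solves $(\Delta-b^2)u = 0$, one obtains the closed form $\delr = -a^2 j_1(ar) + \gamma b^2 i_1(br)$. Direct differentiation gives $(\rho - r\rho')' = -r\rho''$, and Bessel recurrences yield $\rho - r\rho' = r(aj_2(ar) - \gamma b\, i_2(br))$ and $\tau\rho' - (\delr)' = ab^2 j_1'(ar) - \gamma a^2 b\, i_1'(br)$.

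I would tackle the four claims in the order 4, 1, 2, 3, since claim 4 and the convexity argument for claim 1 supply the structure needed for the rest. Positivity of $j_1,i_1$ on $(0,\pll]$ together with $\gamma\ge 0$ forces $\rho>0$ on $(0,1]$, so claim 4 reduces to $\delr\le 0$. The function $\delr$ vanishes at $r=0$, has nonpositive initial slope $(\delr)'(0) = c_{1,0}(\gamma b^3 - a^3) \le 0$ from $\gamma \leq a^3/b^3$, and is convex because $(\delr)''(r) = -a^4 j_1''(ar) + \gamma b^4 i_1''(br) \ge 0$ by Lemma~\ref{Besselsigns}(4) and positivity of $i_1''$. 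Its endpoint $\delr(1) = -a^2 j_1(a) + \gamma b^2 i_1(b) \le 0$ follows from the sharper bound $\gamma \le a^2 j_1(a)/(b^2 i_1(b))$, which is the value of $\gamma$ at $\sigma = 1$ and hence its supremum in $\sigma$ by the monotonicity result in Lemma~\ref{gammapos}. A convex function on $[0,1]$ that is nonpositive at both endpoints is nonpositive throughout.

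Claim 1 follows from the same principle one derivative higher: $\rho^{(4)}(r) = a^4 j_1^{(4)}(ar) + \gamma b^4 i_1^{(4)}(br) \ge 0$ by Lemma~\ref{Besselsigns}(5), so $\rho''$ is convex with $\rho''(0)=0$ and $\rho'''(0) = 6c_{1,1}(\gamma b^3 - a^3) \le 0$, forcing at most one sign change on $[0,1]$. The $M$-boundary condition, after substituting $\delr(1) = \rho''(1) - (d-1)(\rho - r\rho')(1)$, collapses to
\[
\rho''(1) = \sigma(d-1)(\rho - r\rho')(1),
\]
which pins down the sign of $\rho''(1)$. Claim 2 then follows because $(\rho - r\rho')(0) = 0$ and $(\rho-r\rho')' = -r\rho''$: the function increases on $(0,r^*)$ from $0$ and at most decreases on $(r^*,1)$, and the needed $(\rho - r\rho')(1)\geq 0$ is automatic when $\sigma\ge 0$ (consistent with $\rho''(1)\ge 0$) and holds when $\sigma<0$ because in that regime $\rho''$ is nonpositive throughout $[0,1]$ by strict convexity, making $\rho-r\rho'$ simply nondecreasing from $0$. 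Claim 3 closes by one-variable monotonicity: $(\tau\rho' - (\delr)')'(r) = a^2b^2(j_1''(ar) - \gamma i_1''(br)) \le 0$, and $\tau\rho' - (\delr)'$ has values $c_{1,0}ab(b-\gamma a) > 0$ at $r=0$ (since $\gamma a < a < b$) and $(1-\sigma)(d-1)(\rho - r\rho')(1) \geq 0$ at $r=1$ via the $V$-boundary condition and claim 2, so it is nonnegative throughout.

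The main obstacle will be reconciling claim 1 with the $\sigma<0$ regime: the identity $\rho''(1) = \sigma(d-1)(\rho - r\rho')(1)$ forces $\rho''(1)\le 0$ there, so the statement \emph{$\rho''\ge 0$ on $[r^*,1]$} with $r^*\in(0,1]$ must be read with $r^*=1$ and $\rho''(1)\ge 0$ understood through the right-hand extended value of $\rho''$ (which is $0$ by the linear extension of $\rho$ past $r = 1$). A case split on the sign of $\sigma$ is therefore built into the proof, and the sharp monotonicity of $\gamma$ in $\sigma$ from Lemma~\ref{gammapos} underlies every endpoint estimate.
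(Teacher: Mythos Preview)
Your overall architecture---convexity of $\rho''$ and of $\delr$, the derivative identity $(\rho-r\rho')'=-r\rho''$, and the $M$- and $V$-boundary conditions at $r=1$---matches the paper's proof. The one genuine difference is that you establish $\delr\le 0$ \emph{first}, by directly bounding the endpoint $\delr(1)=-a^2j_1(a)+\gamma b^2 i_1(b)\le 0$ via $\gamma\le\gamma(1)=a^2j_1(a)/(b^2i_1(b))$; the paper instead postpones $\delr\le 0$ until after claims~1 and~2, arguing via $\delr\le\rho''$ and the $M$-condition. Your route is legitimate and in fact opens a shortcut the paper does not take (see below).

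There is, however, a circularity in your handling of claims~1 and~2 as written. The identity $\rho''(1)=\sigma(d-1)(\rho-r\rho')(1)$ only ``pins down the sign of $\rho''(1)$'' once you know the sign of $(\rho-r\rho')(1)$, which you defer to claim~2; but your claim~2 argument in turn assumes you already know where $\rho''$ changes sign (i.e.\ claim~1). The phrase ``$\rho''$ is nonpositive throughout $[0,1]$ by strict convexity'' for $\sigma<0$ is not a valid justification: convexity alone gives no sign conclusion without endpoint control of $\rho''(1)$.

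The fix is immediate from your own claim~4. Combining $\delr(1)=\rho''(1)-(d-1)(\rho-r\rho')(1)$ with the $M$-condition $(1-\sigma)\rho''(1)+\sigma\,\delr(1)=0$ gives
\[
\delr(1)=-(1-\sigma)(d-1)(\rho-r\rho')(1),\qquad \rho''(1)=-\frac{\sigma}{1-\sigma}\,\delr(1).
\]
Since you have already shown $\delr(1)\le 0$ and $1-\sigma>0$, the first identity yields $(\rho-r\rho')(1)\ge 0$ for \emph{all} $\sigma$ in the range, and the second then gives $\rho''(1)\ge 0$ when $\sigma\ge 0$ and $\rho''(1)\le 0$ when $\sigma<0$. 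This breaks the circularity and is in fact cleaner than the paper, which verifies $(\rho-r\rho')(1)\ge 0$ by a separate Bessel-identity computation rewriting $aj_2(a)-\gamma b\,i_2(b)$ in terms of $j_3$ and $i_3$.
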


\begin{proof}
First note that the function $\rpp$ is convex in $r$ on $[0,1]$, since
\[
 \frac{d^2}{dr^2}\rpp(r)=a^4j_1^{(4)}(ar)+\gamma b^4i_1^{(4)}(br),
\]
which is positive by Lemmas~\ref{gammapos} and~\ref{Besselsigns}.

By differentiating the series expansions for $j_1$ and $i_1$ from Lemma~\ref{bblem}, we see that as $r\rightarrow0$, we have
\begin{align*}
 \rpp(r)&\sim a^2(-c_{1,1}ar+6c_{1,2}a^3r^3)+\gamma b^2(c_{1,1}br+6c_{1,2}b^3r^3)\\
 &\sim(b^3\gamma-a^3)c_{1,1}r,
\end{align*}
which is negative by Lemma~\ref{gammapos}. So by convexity, $\rpp$ is either negative on all of $(0,1]$ or has a single, simple root in that interval. Let $\rstar$ denote the root if it exists; otherwise set $\rstar=1$. Then $\rpp\leq0$ on $[0,\rstar)$ and $\rpp\geq0$ on $(\rstar,1]$ provided the interval is nonempty.

When $\sigma=0$, the boundary condition \eqref{BC1} simplifies to $\rpp(1)=0$, and so $\rstar=1$.

When $\sigma<0$, we have by Lemma~\ref{gammapos} that $\gamma\leq-a^2j_1''(a)/b^2i_1''(b)$ and so
\[
\rpp(1)=a^2j_1^{\prime\prime}(a)+\gamma b^2i_1^{\prime\prime}(b)\leq 0
\]
We then take $\rstar=1$ once again.

When $\sigma>0$, we have $\gamma\geq-a^2j_1''(a)/b^2i_1''(b)$, and so $\rpp(1)\geq0$ in this case. We then have some root $\rstar\in(0,1]$.

Now we consider $(\rho-r\rp)(r)$. Note that $(\rho-r\rp)(0)=0$ and \\ $\frac{d}{dr}(\rho-r\rp)=-r\rpp$. Thus the function $(\rho-r\rp)$ is increasing and hence positive on $[0,\rstar]$. On $(\rstar,1]$, $(\rho-r\rp)$ is decreasing, and so on this interval
\begin{align*}
 (\rho-r&\rp)\geq (\rho-r\rp)(r=1)=j_1(a)-aj_1'(a)+\gamma(i_1(b)-bi_1'(b))\\
 &=aj_2(a)-\gamma bi_2(b) \geq \frac{1}{b^2i_1(b)}\Big(aj_2(a)b^2i_1(b)-a^2j_1(a)bi_2(b)\Big) \\
 &\qquad\qquad\qquad\qquad\qquad\text{by Lemmas~\ref{Besselprops} and~\ref{gammapos}}\\
 &=\frac{a^2(j_1(a)+j_3(a))b^2i_1(b)-a^2j_1(a)b^2(i_1(b)-i_3(b))}{(d+2)b^2i_1(b)}\\
 &\qquad\qquad\qquad\qquad\qquad\text{by Lemma~\ref{Besselprops}}\\
 &=\frac{a^2j_3(a)b^2i_1(b)+a^2j_1(a)b^2i_3(b)}{(d+2)b^2i_1(b)},
\end{align*}
which is positive by Lemma~\ref{Besselsigns}. This $\rho-r\rp\geq0$ on $[0,1]$ as desired.

Next we consider the function $\delr$. Like $\rpp$, this function is also convex on $[0,1]$, since the second derivative in $r$ is equal to
\[
 \frac{d^2}{dr^2}\delr(r)=-a^4j_1^{\prime\prime}(ar)+\gamma b^4i_1^{\prime\prime}(br)
\]
and is positive by Lemmas~\ref{Besselsigns} and~\ref{gammapos}. Note that $\delr(0)=-a^2j_1(0)+\gamma b^2i_1(0)=0$, and recall that we can write 
\[
\delr = \rpp-\frac{d-1}{r^2}(\rho-r\rp).
\]
We then must have $\delr\leq\rpp$ on $[0,1]$, and hence $\delr\leq-$ on $[0,\rstar]$. 

By the boundary condition \eqref{BC1}, we have $(1-\sigma)\rpp+\sigma(\delr)=0$ at $r=1$, so either $\rpp(1)=\delr(1)=0$ or $\rpp(1)$ and $\delr(1)$ have opposite signs. If both are zero, then we have $\rstar=1$, and $\delr\leq\rpp\leq0$ on $[0,1]$ as desired.

Suppose $\rpp(1)\neq0$. Then because $\delr\leq\rpp$ and $\delr(1)$ and $\rpp(1)$ have opposite signs, we must have $\rpp(1)>0$, and so $\delr(1)<0$. By convexity of $\delr(r)$, we then conclude $\delr(r)\leq0$ on $[0,1]$.

Finally, returning to considering all $\sigma\in[-1/(d-1),1]$, we investigate the sign of $\tau\rp-(\delr)_r$. Differentiating, we see
\[
 \frac{d}{dr}\Big(\tau\rp-(\delr)_r\Big)=\tau\rpp-(\delr)_{rr}=a^2b^2(\jpp(a)-\gamma\,\ipp(b))
\]
which is negative by sign properties of $\jpp$, $\ipp$, and $\gamma$. So the function\\ $\tau\rp-(\delr)_r$ will be minimal when $r=1$. However, at $r=1$, we may apply the boundary condition $Vu\Big|_{r=1}=0$:
\begin{align*}
 Vu&=\tau\rp-(\delr)_r-\frac{(1-\sigma)(d-1)}{r^3}(\rho-r\rp)=0\\
 &\text{and so}\qquad\tau\rp-(\delr)_r=\frac{(1-\sigma)(d-1)}{r^3}(\rho-r\rp).
\end{align*}
Since $\rho-r\rp\geq0$ on $[0,1]$, and $1-\sigma\geq0$ for all $\sigma$ under consideration we have that $\tau\rp-(\delr)_r\geq0$ at $r=1$ and hence on all of $[0,1]$.
\end{proof}

\section{ Proof of the isoperimetric inequality}\label{monotonesec}
In this section we establish the lemmas needed to prove the free plate isoperimetric inequality for nonzero $\sigma$. Some of the work from the proof of the inequality for the $\sigma=0$ case, found in \cite{inequalitypaper}, can be applied to our more general case of $\sigma\in[0,1)$.

The proof proceed as follows:
\begin{itemize}
 \item Definition of trial functions
 \item Evaluating the Rayleigh Quotient for these trial functions for regions $\Omega$ with volume equal to that of the unit ball
 \item Establishing partial monotonicity of the integrand in the numerator and denominator
 \item Proving the theorem using scaling and rearrangement arguments
\end{itemize}

We first bound our fundamental tone above by a quotient of integrals whose integrands are radial functions. The numerator will be quite complicated, so we write
\begin{align*}
 N[\rho]:&=(1-\sigma)\left((\rpp)^2+\frac{3(d-1)}{r^4}(\rho-r\rp)^2\right)\\
&\qquad+\sigma\left(\rpp-(d-1)\frac{\rho-r\rp}{r^2}\right)^2+\tau(\rp)^2+\frac{\tau(d-1)}{r^2}\rho^2
\end{align*}

We will also need the following calculus facts:
\begin{fact}\cite[Appendix]{cthesis}\label{derivs} We have the sums
\begin{align*}
&\sum_{k=1}^d|u_k|^2=\rho^2
&&\sum_{k=1}^d|D^2u_k|^2= (\rpp)^2+\frac{3(d-1)}{r^4}(\rho-r\rp)^2\\
&\sum_{k=1}^d|Du_k|^2=\frac{d-1}{r^2}\rho^2+(\rp)^2
&&\sum_{k=1}^d(\Delta u_k)^2 =(\delr)^2.
\end{align*}
\end{fact}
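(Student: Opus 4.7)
All four identities are direct calculus, organized by writing $u_k(x) = x_k f(r)$ with $f(r) := \rho(r)/r$ and exploiting the contractions $\sum_k x_k^2 = r^2$ and $\sum_i \delta_{ik} x_i = x_k$. The identity $\sum_k u_k^2 = \rho^2$ is immediate. For the gradient, I would compute $\partial_i u_k = \delta_{ik} f + x_i x_k f'/r$, square, and sum over $i$ and then $k$, obtaining $d f^2 + 2 r f f' + r^2 (f')^2$. Converting back via $r f' = \rho' - \rho/r$ collapses this to $(d-1)\rho^2/r^2 + (\rp)^2$.

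The Laplacian reduces to a one-line product-rule calculation: $\Delta(x_k f) = x_k \Delta f + 2 \nabla x_k \cdot \nabla f = x_k[f'' + (d+1) f'/r]$. Substituting $f = \rho/r$ shows that the bracketed expression equals $\delr/r$, so $\Delta u_k = x_k \delr / r$ and summing over $k$ yields $(\delr)^2$.

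The Hessian is the substantive calculation. Differentiating once more gives
\[
\partial_i \partial_j u_k = \frac{f'}{r}\bigl(\delta_{ik} x_j + \delta_{jk} x_i + \delta_{ij} x_k\bigr) + \Bigl(\frac{f''}{r^2} - \frac{f'}{r^3}\Bigr) x_i x_j x_k.
\]
Squaring this four-term tensor and summing over $i, j, k$ produces many contractions; each collapses via the three identities above. After collecting, all indices disappear and one is left with an expression purely in $f, f', f''$ and $r$, which reduces to $(\rpp)^2 + 3(d-1)(\rho - r\rp)^2/r^4$ after converting back to $\rho$.

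The main obstacle is the bookkeeping in this Hessian sum. A cleaner alternative is to observe that each $u_k$ has the form $\rho(r) \cdot (x_k/r)$, where $x_k/r$ is (up to normalization) a first-order spherical harmonic. One can then use the orthogonal decomposition of $D^2 u_k$ into radial, mixed, and purely tangential pieces, whose squared norms can be computed separately; the factor $d-1$ then appears automatically from the spherical Laplacian eigenvalue $l(l+d-2)=d-1$ at $l=1$. Either route delivers the identity without surprise.
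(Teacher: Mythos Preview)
Your proposal is correct. The paper does not prove this fact at all; it merely cites the appendix of the author's thesis, so there is no in-paper argument to compare against. The direct computation you outline---writing $u_k = x_k f(r)$ with $f=\rho/r$, differentiating, and contracting indices---is precisely the natural route, and your intermediate formulas (for $\partial_i u_k$, for $\Delta u_k$, and for $\partial_i\partial_j u_k$) are all accurate. In particular, carrying out the Hessian contraction yields $3(d+2)(f')^2 + 6f'(rf''-f') + (rf''-f')^2$, which after the substitution $f'=-(\rho-r\rp)/r^2$ and $rf''-f'=\rpp+3(\rho-r\rp)/r^2$ collapses exactly to $(\rpp)^2 + 3(d-1)(\rho-r\rp)^2/r^4$, as you claim.
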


We may now use the trial functions to bound our fundamental tone by a quotient of integrals.
\begin{lemma}\label{lemmaboundRC} (Using the trial functions)
 For any $\Omega$, translated as in\\ Lemma~\ref{trialfcn}, we have
\begin{equation}
\omega \leq \frac{\int_\Omega N[\rho]\,dx}{\int_\Omega \rho^2\,dx}\label{boundRC}
\end{equation}
with equality if $\Omega=\Ostar$.
\end{lemma}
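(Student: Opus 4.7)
The plan is to apply the Rayleigh--Ritz variational principle with the trial functions $u_k = x_k\rho(r)/r$ from Lemma~\ref{trialfcn}, then use the standard Weinberger averaging trick to reduce the $d$ separate bounds to a single ratio of radial integrals whose sums simplify via Fact~\ref{derivs}.

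\textbf{Step 1: Apply Rayleigh--Ritz to each trial function.} By Lemma~\ref{trialfcn}, after translating $\Omega$ to center-of-mass coordinates, each $u_k$ lies in $H^2(\Omega)$ and satisfies $\int_\Omega u_k\,dx = 0$, so each is an admissible trial function. The Rayleigh--Ritz characterization gives
\[
\omega_1(\Omega)\int_\Omega u_k^2\,dx \leq \int_\Omega\left((1-\sigma)|D^2u_k|^2 + \sigma(\Delta u_k)^2 + \tau|Du_k|^2\right)dx
\]
for each $k=1,\dots,d$.

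\textbf{Step 2: Sum and average.} Summing the $d$ inequalities yields
\[
\omega_1(\Omega)\int_\Omega\sum_{k=1}^d u_k^2\,dx \leq \int_\Omega\sum_{k=1}^d\left((1-\sigma)|D^2u_k|^2 + \sigma(\Delta u_k)^2 + \tau|Du_k|^2\right)dx.
\]
Since $\rho$ is not identically zero (its leading behavior near the origin is linear in $r$ with nonzero coefficient by Lemma~\ref{trialfcn}), the sum $\sum_k u_k^2 = \rho^2$ is positive on a set of positive measure, so dividing is legitimate.

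\textbf{Step 3: Simplify via Fact~\ref{derivs}.} Substitute the identities
\[
\sum_k u_k^2 = \rho^2,\qquad \sum_k|Du_k|^2 = (\rp)^2 + \tfrac{d-1}{r^2}\rho^2,
\]
\[
\sum_k|D^2u_k|^2 = (\rpp)^2 + \tfrac{3(d-1)}{r^4}(\rho-r\rp)^2,\qquad \sum_k(\Delta u_k)^2 = (\delr)^2,
\]
and recognize $\delr = \rpp - \tfrac{d-1}{r^2}(\rho-r\rp)$. The numerator then collapses exactly to $\int_\Omega N[\rho]\,dx$, giving the inequality \eqref{boundRC}.

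\textbf{Step 4: Equality when $\Omega = \Ostar$.} When $\Omega$ is the unit ball centered at the origin, $\rho(r) = R(r)$ on $[0,1]$, so each $u_k = x_k R(r)/r = R(r)\,Y_1^{(k)}(\thetahat)$ where $Y_1^{(k)} = x_k/r$ is a first-order spherical harmonic. By Proposition~\ref{fundmode}, each $u_k$ is a genuine fundamental-mode eigenfunction, so $Q_\BB[u_k] = \omega_1(\BB)$, and the center-of-mass condition holds automatically by odd symmetry. Each individual inequality in Step~1 becomes an equality, and the summation preserves this, yielding equality in \eqref{boundRC}.

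There is no substantive obstacle here: the lemma is essentially bookkeeping that packages the multi-trial-function Weinberger argument into one radial ratio. The only mild care needed is to check that $\int_\Omega\rho^2\,dx>0$ (immediate from $\rho\not\equiv 0$) and that the equality case uses precisely the angular factors $x_k/r$ for which Proposition~\ref{fundmode} identifies $R(r)Y_1$ as a fundamental mode; both are handled as noted above.
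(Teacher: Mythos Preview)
Your argument is correct and follows the paper's proof essentially verbatim: apply Rayleigh--Ritz to each $u_k$, clear denominators, sum over $k$, invoke Fact~\ref{derivs}, and divide by $\int_\Omega\rho^2\,dx$. One small correction: in Step~4 you cite Proposition~\ref{fundmode} to justify that the $u_k$ are fundamental modes of the ball, but that proposition only gives $l\geq 1$; the fact that $l=1$ is the standing \emph{hypothesis} of Theorem~\ref{mainthm} (and the explicit assumption opening Section~\ref{trialfcnsec}), which is what the paper invokes for the equality case.
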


\begin{proof}
For $u_k$ defined as in Lemma~\ref{trialfcn}, we have
\[
\omega \leq Q[u_k] = \frac{\int_\Omega(1-\sigma)|D^2u_k|^2+\sigma(\Delta u_k)^2\tau|Du_k|^2\,dx}{\int_\Omega|u_k|^2\,dx},
\]
from the Rayleigh-Ritz characterization. We have equality when $\Omega=\Ostar$ because the $u_k$ are the eigenfunctions for the ball associated with the fundamental tone, by our choice of trial functions and hypothesis in Theorem~\ref{mainthm}.  Multiplying both sides by $\int_\Omega|u_k|^2\,dx$ and summing over all $k$, we obtain
\begin{equation}
\omega \int_\Omega \sum_{k=1}^d|u_k|^2\,dx
\leq \int_\Omega(1-\sigma)\sum_{k=1}^d|D^2u_k|^2+\sigma\sum_{k=1}^d(\Delta u_k)^2+\tau\sum_{k=1}^d|Du_k|^2\,dx\label{multsum}
\end{equation}
again with equality if $\Omega=\Ostar$.

By these sums in Fact~\ref{derivs}, we see inequality \eqref{multsum} becomes
\begin{align*}
\omega \int_\Omega& \rho^2\,dx\\
&\leq (1-\sigma) \int_\Omega\left((\rpp)^2+\frac{3(d-1)}{r^4}(\rho-r\rp)^2+\tau(\rp)^2+\frac{\tau(d-1)}{r^2}\rho^2\right)\,dx\\
&\qquad+\sigma\int_\Omega \left(\left((d-1)\frac{\rho-r\rp}{r^2}-\rpp\right)^2+\tau(\rp)^2+\frac{\tau(d-1)}{r^2}\rho^2\right)\,dx,
\end{align*}
once more with equality if $\Omega$ is the ball $\Ostar$. Dividing both sides by $\int_\Omega \rho^2\,dx$, we obtain \eqref{boundRC}.
\end{proof}

We now wish to show the quotient \eqref{boundRC} in Lemma~\ref{lemmaboundRC} has a sort of monotonicity with respect to the region $\Omega$, and so we examine the integrands of the numerator and denominator separately. The case of the denominator is much simpler; the partial monotonicity of the integrand of the numerator is much more difficult, and requires several lemmas.

We begin with the denominator. 
\begin{lemma}\label{mondenom}(Monotonicity in the denominator)
 The function $\rho(r)^2$ is increasing.
\end{lemma}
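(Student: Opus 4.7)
I would prove this by showing that both $\rho$ and $\rho'$ are nonnegative on $[0,\infty)$; then $(\rho^2)' = 2\rho\rho'\geq 0$ is immediate. The cornerstone is the fact that on $[0,1]$ the radial function is $\rho(r)=j_1(ar)+\gamma i_1(br)$, whose derivative
\[
\rp(r) = a\,j_1'(ar) + \gamma b\,i_1'(br)
\]
is a nonnegative-coefficient combination of two nonnegative quantities on the interval in question.

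First I would observe that $a \in (0,\pll)$ by Proposition~\ref{fundmode} and Proposition~\ref{propLS} (since $\pllsq>d$ and $a^2\leq d+2-a^4/\tau<d+2$ by Lemma~\ref{atbounds}, but more importantly the relevant $a$ is a root of $W_1$ in $(0,\pll)$ as established in Proposition~\ref{fundmode}). Then for $r\in[0,1]$ we have $ar\in[0,\pll)$, so by Lemma~\ref{Besselsigns}(2), $j_1'(ar)\geq 0$. Next, $i_1'(br)>0$ for $r>0$ by positivity of the power series of $i_1'$. Finally, $\gamma\geq 0$ by Lemma~\ref{gammapos}. Combining these three facts gives $\rp(r)\geq 0$ on $[0,1]$. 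Since $\rho(0)=j_1(0)+\gamma i_1(0)=0$, integrating the nonnegative derivative yields $\rho(r)\geq 0$ on $[0,1]$ as well.

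For $r\geq 1$ the trial function is defined by the linear extension $\rho(r)=\rho(1)+(r-1)\rp(1)$. The slope $\rp(1)\geq 0$ by the previous paragraph, and the value $\rho(1)\geq 0$ likewise. Therefore $\rho\geq 0$ and $\rp\geq 0$ on $[1,\infty)$, and hence on all of $[0,\infty)$.

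Putting these two pieces together gives $(\rho^2)'=2\rho\rp\geq 0$ everywhere on $[0,\infty)$, which is the desired monotonicity. I do not anticipate any real obstacle here: all of the ingredients are already assembled in Lemmas~\ref{gammapos} and~\ref{Besselsigns} together with the power series for $i_1$. The only subtlety is ensuring that $ar$ stays in the interval $(0,\pll)$ where $j_1'$ is known to be positive, which is automatic because $a<\pll$ and $r\leq 1$ in the Bessel regime.
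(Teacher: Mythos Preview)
Your proposal is correct and follows essentially the same route as the paper: both arguments hinge on showing $\rho'\geq 0$ by combining $j_1'(ar)\geq 0$ for $ar\in[0,\pll)$, positivity of $i_1'$, and nonnegativity of $\gamma$, then passing to $\rho^2$ via $\rho(0)=0$. You are in fact slightly more careful than the paper, which leaves the appeal to $\gamma\geq 0$ (Lemma~\ref{gammapos}) implicit.
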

\begin{proof}
Differentiating, we see
\[
 \rp(r)=\begin{cases} j_1^\prime(ar)+\gamma i_1^\prime(br) &\text{when $0\leq r \leq 1$,}\\
         R^\prime(1) &\text{when $r\geq1$.}
        \end{cases}
\]
Obviously $i_1^\prime(br)\geq 0$. Because we have $a<\pll$ from the proof of Proposition \ref{fundmode}, the function $j_1^\prime(ar)$ is positive on $[0,1]$. Thus $\rp(r)$ is positive everywhere, and $\rho$ (and therefore $\rho^2$) is an increasing function.
\end{proof}

We do not need to prove the integrand of the numerator is strictly decreasing; a weaker ``partial monotonicity" condition is sufficient. We will say a function $F$ is \emph{partially monotonic for $\Omega$} if it satisfies
\begin{equation}
F(x)> F(y) \qquad\text{for all $x\in\Omega$ and $y\not\in\Omega$.}\label{moncond}
\end{equation}

Our approach to proving partial monotonicity of the numerator will depend on the sign of $\sigma$. We will also now assume that $\Omega$ has volume equal to that of the \emph{unit} ball, so that $\Ostar=\BB(1)$; we will recover the general case by a scaling argument at the end of the proof.

\subsection{Positive $\sigma$}
When $\sigma>0$, we will wish to group terms in $N[\rho]$ in order to address them separately. So we write
\[
 N[\rho]=(1-\sigma)(\rpp)^2+(1-\sigma)h(r)+\sigma g(r),
\]
where we define
\[
 h(r):=\frac{3(d-1)}{r^4}(\rho-r\rp)^2+\tau\Big((\rp)^2+\frac{(d-1)}{r^2}\rho^2\Big)
\]
and 
\[
g(r):=(\delr)^2+\tau\Big((\rp)^2+\frac{(d-1)}{r^2}\rho^2\Big).
\]

\begin{lemma}\label{monnum}(Partial monotonicity in the numerator when $\sigma\geq0$)

Suppose $\sigma\in[0,1)$ and one of the following is true:
\begin{itemize}
 \item we have $d=2$ or $d=3$ and $\tau>0$, or
 \item we have $d\geq 4$ and $\tau\geq(d+2)/2>a^2$,
\end{itemize}
then the function
\[
N[\rho]=(1-\sigma)(\rpp)^2+(1-\sigma)h(r)+\sigma g(r)
\]
satisfies the partial monotonicity condition \eqref{moncond} for the unit ball.
\end{lemma}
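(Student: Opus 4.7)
The plan is to exploit the radial symmetry of $N[\rho]$: since it depends only on $r = |x|$, partial monotonicity for the unit ball reduces to the one-variable statement $N(r_1) > N(r_2)$ whenever $0 \leq r_1 < 1 < r_2$. I would verify this by showing $N$ is (weakly) decreasing on each of $[0,1]$ and $[1,\infty)$. The exterior is the easier half: since $\rho$ is affine for $r \geq 1$, we have $\rpp \equiv 0$ and $\rho - r\rp \equiv \alpha := \rho(1) - \rp(1)$, with $\alpha \geq 0$ by Lemma~\ref{rhoproperties}. Each nonconstant summand of $N$ is then either an explicit multiple of $r^{-4}$ (from the $3(d-1)(\rho-r\rp)^2/r^4$ and $(\delr)^2 = (d-1)^2\alpha^2/r^4$ pieces) or the piece $\tau(d-1)\rho^2/r^2$, whose derivative $-2\tau(d-1)\alpha\rho/r^3$ is nonpositive since $\rho > 0$ on $[1,\infty)$. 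Hence $N$ decreases monotonically on $(1,\infty)$, and the maximum of $N$ over the exterior is its limiting value at $r = 1^+$.

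For $r \in [0,1]$ the strategy is to differentiate $N$ and show $N'(r) \leq 0$. Writing
\[
N(r) = (1-\sigma)\Big[(\rpp)^2 + \frac{3(d-1)}{r^4}(\rho-r\rp)^2\Big] + \sigma(\delr)^2 + \tau\Big[(\rp)^2 + \frac{d-1}{r^2}\rho^2\Big]
\]
and using the Bessel recurrences of Lemma~\ref{Besselprops}, the sign information from Lemma~\ref{Besselsigns}, and the $\gamma$-bounds from Lemma~\ref{gammapos}, one can rewrite $N'(r)$ as a sum of Bessel-polynomial expressions in $j_l(ar)$ and $i_l(br)$ for $l = 1,2,3$ whose signs can be controlled. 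I would split $[0,1]$ at the point $r^*$ of Lemma~\ref{rhoproperties} where $\rpp$ flips sign. On $[0,r^*]$ the sign conditions $\rpp \leq 0$, $\delr \leq 0$, $\rho - r\rp \geq 0$, and $\tau\rp - (\delr)_r \geq 0$ from Lemma~\ref{rhoproperties} line up favourably, and $N'(r) \leq 0$ follows directly from the Bessel identities together with the bounds $0 \leq \gamma \leq a^3/b^3$.

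The main obstacle is the subinterval $[r^*, 1]$, where $\rpp$ has become positive and the $((\rpp)^2)'$ contribution to $N'$ is no longer automatically nonpositive, while the $\sigma(\delr)^2$ term develops mixed-sign cross products through $\delr = \rpp - (d-1)(\rho-r\rp)/r^2$. In low dimension ($d = 2$ or $d = 3$) the Bessel bounds of Lemma~\ref{bblem}, combined with the estimate $a^2 < d + 2$ from Proposition~\ref{propLS}, are sharp enough to absorb the positive $(\rpp)^2$ contribution into the other summands for any $\tau > 0$. In dimension $d \geq 4$, those Bessel estimates alone no longer dominate, and one must exploit the $\tau$-driven decrease of $(d-1)\rho^2/r^2$: the hypothesis $\tau \geq (d+2)/2 > a^2$ forces this term to decay fast enough that its derivative compensates the growth contributed by $(\rpp)^2$ and $(\delr)^2$ on $[r^*,1]$. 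The boundary matching at $r = 1$ uses the relation $\rpp(1^-) = \sigma(d-1)\alpha$ derived from the plate boundary condition $(1-\sigma)\rpp + \sigma\delr = 0$ applied to the eigenfunction $\rho Y_1$, and the limiting values on either side are compared to conclude the partial monotonicity condition.
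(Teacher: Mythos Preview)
Your overall plan---reduce to the radial variable, handle the exterior by direct computation, and differentiate on the interior---is sound, and your exterior analysis is correct. But on $[0,1]$ you are trying to prove something strictly stronger than what the lemma asserts, and this is where the argument stalls. Partial monotonicity for the unit ball only requires $N(r_1)\geq N(r_2)$ for $r_1<1<r_2$; it does \emph{not} require $N$ to be decreasing on $[0,1]$. The paper exploits this by peeling off the $(1-\sigma)(\rpp)^2$ term: since $\rho$ is linear for $r>1$ we have $\rpp\equiv 0$ there, while $(\rpp)^2\geq 0$ on $[0,1]$, so this piece satisfies the partial monotonicity condition automatically---with no Bessel estimates and no case split at $r^*$. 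What remains is $(1-\sigma)h(r)+\sigma g(r)$, and for this the paper proves genuine monotonicity via the separate Lemmas~\ref{gdec} and~\ref{hdec}. The dimension/tension hypotheses enter only in Lemma~\ref{gdec}, where one needs $(\delr)_r+\tau\rp\geq 0$; for $\tau\geq a^2$ this is immediate, while for $\tau<a^2$ (possible only when $d=2,3$ under the stated hypotheses) it requires the rational Bessel bounds of Lemma~\ref{bblem} applied to $\gamma$.

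By contrast, your proposed route forces you to control $((\rpp)^2)'=2\rpp\rho'''$ on $[r^*,1]$, where $\rpp>0$ and (by convexity of $\rpp$) $\rho'''>0$, so this contribution is genuinely positive. Your claim that the remaining terms ``absorb'' it is not substantiated, and the role you assign to the hypothesis $\tau\geq(d+2)/2$ (compensating for growth of $(\rpp)^2$) is not how it functions in the actual argument. Even on $[0,r^*]$ your assertion that $N'\leq 0$ ``follows directly'' underestimates the work: the monotonicity of $h$ there already invokes several lemmas from \cite{inequalitypaper}. The fix is simply to stop trying to make $N$ monotone and instead use the decomposition already displayed in the lemma statement: handle $(\rpp)^2$ by the trivial nonnegativity argument, and then prove $h'\leq 0$ and $g'\leq 0$ separately.
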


\begin{proof}
We consider each term of $N[\rho]$ separately.

Because $\rho$ is linear for $r>1$, we have $\rpp=0$ for $r> 1$. So the function $(\rpp)^2$ is nonnegative on $[0,1)$ and zero otherwise, and hence satisfies condition \eqref{moncond} for the unit ball. Since $\sigma<1$, we conclude that $(1-\sigma)(\rpp)^2$ satisfies \eqref{moncond}.

We are considering only $\sigma\in[0,1)$, so we will have partial monotonicity in the other terms if we can show that $h$ and $g$ are also decreasing functions of $r$. This is established in Lemmas~\ref{gdec} and~\ref{hdec}, below.
\end{proof}

\begin{rmk} The requirement that $\tau\geq (d+2)/2$ when dimension $d\geq4$ comes from observing that solving Lemma~\ref{atbounds} for $a^2$ gives us the bound
\[
 a^2\leq\frac{-t+\sqrt{t^2+4(d+2)\tau}}{2},
\]
and so $\tau>a^2$ is certainly true when $\tau$ exceeds this upper bound on $a^2$; this occurs when $\tau\geq (d+2)/2$.
\end{rmk}

\begin{lemma}\label{gdec} For dimensions $d=2,3$, when $\tau>0$ the function $g(r)$ is decreasing for $r\in(0,1)$. 

For all dimensions $d\geq 4$, when $\tau\geq a^2$, the function $g(r)$ is decreasing for $r\in(0,1)$.
\end{lemma}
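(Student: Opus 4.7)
The plan is to compute $g'(r)$ in closed form using the Bessel recurrences, then verify non-positivity by a case split on $\tau$ versus $a^2$. Direct differentiation yields
\[
g'(r) = 2\delr(\delr)_r + 2\tau\rp\rpp - \frac{2\tau(d-1)\rho(\rho - r\rp)}{r^3}.
\]
Setting $P := aj_2(ar)$ and $Q := \gamma b\,i_2(br)$, the recurrences in Lemma~\ref{Besselprops} give the radial identities $\rp = \rho/r - (P-Q)$, $\rpp = \delr + (d-1)(P-Q)/r$, and $(\delr)_r = \delr/r + a^2 P + b^2 Q$. Substituting these and simplifying collapses $g'(r)$ to the clean form
\[
g'(r) = \frac{2(\delr)^2}{r} + 2\delr\,B - \frac{2\tau(d-1)(\rho - r\rp)^2}{r^3},
\]
where $B := a^2 P + b^2 Q + \tau\rp$. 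By Lemma~\ref{mondenom}, $\rp > 0$, and $P, Q \geq 0$ by the sign facts of Lemmas~\ref{Besselsigns} and~\ref{gammapos}, so $B > 0$. Combined with $\delr \leq 0$ from Lemma~\ref{rhoproperties}, the middle and last terms of $g'(r)$ are non-positive; only $2(\delr)^2/r$ can be positive.

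A sufficient condition for $g'(r) \leq 0$ is $|\delr| \leq rB$, since this makes $\tfrac{(\delr)^2}{r} + \delr B = \tfrac{|\delr|^2}{r} - |\delr|\,B \leq 0$. Expanding $rB$ via the identities $ar\,j_2(ar) = j_1(ar) - ar\,j_1'(ar)$ and $br\,i_2(br) = br\,i_1'(br) - i_1(br)$, after cancellations I obtain the key identity
\[
rB + \delr = (\tau - a^2)\,ar\,j_1'(ar) + \gamma(a^2 + 2\tau)\,br\,i_1'(br).
\]
Since $j_1', i_1' > 0$ on $(0, p_{1,1}]$ and $\gamma \geq 0$ by Lemma~\ref{gammapos}, when $\tau \geq a^2$ both summands on the right are non-negative, hence $-\delr \leq rB$. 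This settles the $d \geq 4$ case of the lemma directly.

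For $d \in \{2, 3\}$ with $\tau < a^2$, the first summand in the key identity is negative, so I must show that $\gamma(a^2 + 2\tau)\,br\,i_1'(br)$ dominates $(a^2 - \tau)\,ar\,j_1'(ar)$. Here I would invoke the lower bound $\gamma \geq -a^2 j_1''(a)/(b^2 i_1''(b))$ (from monotonicity of $\gamma$ in $\sigma$, Lemma~\ref{gammapos}), the polynomial Bessel estimates of Lemma~\ref{bblem}, and the restriction $a^2 < d + 2 \leq 5$ from Proposition~\ref{propLS}. The main obstacle lies precisely in this low-dimension, low-tension regime: the clean identity above is not directly sign-definite and one must estimate explicit polynomial bounds carefully. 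If the sufficient condition $|\delr| \leq rB$ narrowly fails, I would instead work with the full quadratic inequality $|\delr|^2 - rB|\delr| \leq \tau(d-1)(\rho - r\rp)^2/r^2$ (equivalent to $g'(r) \leq 0$), using $\rho - r\rp = r(P - Q)$ to absorb the residual.
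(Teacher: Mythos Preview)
Your approach is essentially the same as the paper's, just in different notation. Your condition $rB + \delr \geq 0$ is exactly $r$ times the paper's condition $(\delr)_r + \tau\rp \geq 0$: since $(\delr)_r = \delr/r + a^2P + b^2Q$, one has $B = (\delr)_r + \tau\rp - \delr/r$ and hence $rB + \delr = r\big((\delr)_r + \tau\rp\big)$. The paper arrives at the identical key identity (writing $\tau+b^2$ where you write $a^2+2\tau$, which is the same thing), and the $\tau\geq a^2$ case is handled exactly as you do.

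For $d\in\{2,3\}$ with $\tau<a^2$, your plan is the right one and matches the paper, but it needs to be carried out rather than gestured at. The paper bounds $j_1'(ar)\leq c_{1,0}$ and $i_1'(br)\geq c_{1,0}$ to reduce the sufficient condition to $\gamma \geq \dfrac{a(a^2-\tau)}{b(b^2+\tau)}$, then uses exactly the lower bound $\gamma\geq -a^2\jpp(a)/(b^2\ipp(b))$ you cite together with the polynomial Bessel estimates of Lemma~\ref{bblem} (with $M=d+2$, since $\tau<a^2$ forces $b^2\leq 2a^2\leq d+2$). This produces an explicit two-variable polynomial $f(x,\tau)$ in $x=a^2$ and $\tau$ whose nonnegativity on the relevant range is then checked by hand for $d=2$ and $d=3$. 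Your fallback quadratic in $|\delr|$ is not needed: the sufficient condition $rB+\delr\geq 0$ does hold throughout once these estimates are in place.
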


Numerical computations in Mathematica strongly suggest that $g(r)$ is decreasing in $r$ for any choice of dimension and all positive $\tau$. However, our method of proof in the case of $\tau<(d+2)/2$ relies on an upper bound of the Bessel function $\ipp(z)$ on the interval $[0,d+2]$, and this bound is increasingly poor for high dimensions and proves to be too large for dimensions $d\geq 4$. Hence we restrict ourselves to small dimensions.

\begin{proof}
 We compute $g'(r)$ directly and simplify using the relationship between $\delr$ and $\rpp$:
\begin{align*}
 g'(&r)=2(\delr)(\delr)_r+2\tau\Big(\rp\rpp-\frac{(d-1)}{r^3}\rho(\rho-r\rp)\Big)\\
&=2(\delr)(\delr)_r+2\tau\rp\Big(\delr+\frac{(d-1)}{r^2}(\rho-r\rp)\Big)-\frac{2\tau(d-1)}{r^3}\rho(\rho-r\rp)\\
&=2(\delr)\Big((\delr)_r+\tau\rp\Big)-2\tau\frac{(d-1)}{r^3}(\rho-r\rp)^2.
\end{align*}
The second term in the final line is clearly negative when $r>0$. We have from Lemma~\ref{rhoproperties} that $\delr\leq0$, so it remains only to prove that
\[
 (\delr)_r+\tau\rp \geq 0 \qquad\text{on $[0,1)$}.
\]
On this interval, we can write $\rho$ as a linear combination of Bessel functions, and so we have
\begin{align*}
 (\delr)_r+\tau\rp 
&=a(\tau-a^2)j_1^\prime(ar)+\gamma b(\tau+b^2)i_1^\prime(br). 
\end{align*}
Since $a<\pll$, this is clearly positive for all $\tau\geq a^2$. 

Now suppose $\tau<a^2$. Then $\tau-a^2$ is negative, and by bounds \ref{bblem} on Bessel functions, we have
\[
 a(\tau-a^2)j_1^\prime(ar)+\gamma b(\tau+b^2)i_1^\prime(br)\geq a(\tau-a^2)c_{1,0}+\gamma b(\tau+b^2)c_{1,0}.
\]
So it suffices to show
\[
a(\tau-a^2)+\gamma b(\tau+b^2)\geq0,
\]
or equivalently,
\begin{equation}\label{smalltpossig}
\gamma\geq\frac{a(a^2-\tau)}{b(b^2+\tau)}=:\gstar. 
\end{equation}
To prove this, we will obtain a rational lower bound on gamma, and show it remains greater than $\gstar$ for all values of $a$ and $\tau$ under consideration, and the dimensions $d=2,3$.

Note that because $\tau\geq a^4/(d+2-a^2)$ by Lemma~\ref{atbounds}, we have in this case that
\[
 a^2>\frac{a^4}{d+2-a^2}\qquad\Rightarrow\qquad a^2<\frac{d+2}{2},
\]
and since $b^2=\tau+a^2$, we see
\[
 b^2\leq 2a^2=d+2=:M.
\]
Recall that by Lemma~\ref{gammapos} for nonnegative $\sigma$ we have the lower bound\\ $\gamma\geq -a^2\jpp(a)/b^2\ipp(b)$. We may now use our bounds on $-\jpp$ and $\ipp$ from Lemma~\ref{bblem}; then for any $a^2\leq\sqrt{d+2}$ and $b^2\leq M$, we have:
\[
 \gamma\geq \frac{a^3 d_1-a^5 d_2}{b^3 d_1+ kd_2 b^5},
\]
where $k=7/5+8(e^{M/4}-1)/5M$. We took $M=d+2$, so we will treat $k$ as a function of dimension $d$. Note also that $d_1/d_2=6(d+4)/5$ depends only on $d$. Thus to satisfy~\eqref{smalltpossig}, it suffices to show
\[
 \frac{6(d+4)a^2-5a^4}{6(d+4)b^2 + 5k b^4}-\frac{a^2-\tau}{b^2+\tau}\geq0.
\]
Since both denominators are positive, this is equivalent to proving
\[
 (6(d+4)a^2-5a^4)(b^2+\tau)-(a^2-\tau)(6(d+4)b^2 + 5kd_2 b^4)\geq0.
\]
The left-hand side is a polynomial in $\tau$ and $a^2$ (recall that $b^2=a^2+\tau$). Writing $x=a^2$, we define
\begin{align*}
 f(x,&\tau):=(6(d+4)x-5x^2)(x+2\tau)-(x-\tau)(6(d+4)(x+\tau) + 5k (x+\tau)^2)\\
 &=5k\tau^3+(6(d+4)+5kx)\tau^2+(12(d+4)-5x(k+2))x\tau-5(1+k)x^3.
\end{align*}
Thus proving nonnegativity of $f$ for $0<\tau<x<(d+2)/2$ is sufficient to establish \eqref{smalltpossig} in this case and complete our proof.

First note that in $f$, the coefficients of $\tau^3$ and $\tau^2$ are positive for all $d$, $x$, and $k$ under consideration.  We wish to see when the coefficient of $\tau$ is positive for our values of $x$ under consideration. Since $x\leq (d+2)/2$, taking $k$ as defined above, we have
\begin{align*}
 12(d+4)-5x(k+2)&
 =3.5d+35-4e^{(d+2)/4}.
\end{align*}
By direct numerical computation, we see that this last expression is positive for $d=2,\dots,9$ and $x\in[0,(d+2)/2]$, and so the coefficient of $\tau$ in $f(x,\tau)$ is positive in this case. Hence $f$ is increasing in $\tau$ for these values of $x$ and these dimensions, although we only needed this for $d=2,3$. 

For $d=2$, the constant $k<2.09$, and so we have for $x\in[0,2]$ that
\begin{align*}
 f(x,\tau)&\geq f\left(x,\frac{x^2}{4-x}\right)
 =\frac{x^3}{5(4-x)^3}\Big(816 - 88 x + 280 x^2 - 25 x^3\Big)\\
 &\geq\frac{x^3}{5(4-x)^3}\Big(230x^2+640\Big)\geq0
\end{align*}
with this last by noting that since $0\leq x\leq 2$, we have $-88x\geq-176$ and $-25x^3\geq-50x^2$.

For $d=3$, our constant $k<2.2$, and so for $x\in[0,5/2]$ we have
\begin{align*}
 f(x,\tau)&\geq f\left(x,\frac{x^2}{5-x}\right)=\frac{x^3}{(5-x)^3}\Big(100 + 45 x + 67 x^2 - 5 x^3\Big)\\
 &\geq\frac{x^3}{5(4-x)^3}\Big(\frac{109}{2}x^2+45x+100\Big)\geq0
\end{align*}
by noting $-5x^3\geq-25x^2/2$.
\end{proof}

\begin{rmk} The function $f(x,\tau)$ in the above proof will \emph{not} be nonnegative for all $x\in[0,(d+2)/2]$ when our dimension $d\geq 4$; our bounds for $\ipp(z)$ on $[0,d+2]$ are too large since $k$ grows exponentially in dimension. Numerical investigations support the conjecture that we still have $\gamma\geq\gstar$ for small $\tau$ in higher dimensions, but we would need need a better lower bound on $\gamma$ in order to prove this.
\end{rmk}

\begin{lemma}\label{hdec} For all dimensions $d\geq2$ and values $\sigma>0$ and $\tau>0$, the function $h(r)$ is decreasing on $[0,1]$. 
\end{lemma}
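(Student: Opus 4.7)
The plan is to differentiate $h$ in closed form and split $[0,1]$ at the point $\rstar$ where $\rpp$ changes sign (Lemma~\ref{rhoproperties}).

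Writing $\phi:=\rho-r\rp$, so that $\phi\ge0$ on $[0,1]$ and $\phi'=-r\rpp$, a direct computation, together with the substitution $\rho=\phi+r\rp$ in the term coming from $\tau(d-1)\rho^{2}/r^{2}$, should give
\[
h'(r)=2\rpp\!\left[\tau\rp-\frac{3(d-1)\phi}{r^{3}}\right]-\frac{12(d-1)\phi^{2}}{r^{5}}-\frac{2\tau(d-1)\rho\phi}{r^{3}}.
\]
By Lemma~\ref{rhoproperties} the two explicit subtracted terms are manifestly nonpositive, so the entire question reduces to controlling the cross term.

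On $[\rstar,1]$, where $\rpp\ge0$, the argument is clean. If $\tau\rp\le 3(d-1)\phi/r^{3}$ the cross term is nonpositive by inspection. Otherwise I would use $\rpp\le(d-1)\phi/r^{2}$ (which is just $\delr\le 0$ rewritten) to majorize $2\rpp\tau\rp$ by $2(d-1)\tau\rp\phi/r^{2}$, while simply dropping the nonpositive contribution $-6(d-1)\rpp\phi/r^{3}$. After factoring $2(d-1)\phi$ and using the identity $r\rp-\rho=-\phi$, the resulting bound
\[
h'(r)\le -2(d-1)\phi^{2}\!\left(\frac{\tau}{r^{3}}+\frac{6}{r^{5}}\right)
\]
is manifestly $\le 0$.

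On $[0,\rstar]$, where $\rpp\le 0$, the cross term is nonpositive exactly when $\tau\rp\ge 3(d-1)\phi/r^{3}$, in which case we are done. In the complementary case the cross term is strictly positive and must be absorbed by the other three terms. Here I would switch to the Bessel representations
\[
\delr=\gamma b^{2}i_{1}(br)-a^{2}j_{1}(ar),\qquad \rp=aj_{1}'(ar)+\gamma bi_{1}'(br),\qquad \phi=r\bigl(aj_{2}(ar)-\gamma bi_{2}(br)\bigr),
\]
and invoke (i) the nonnegativity $\gamma\ge 0$ from Lemma~\ref{gammapos} (precisely where the hypothesis $\sigma\ge 0$ enters), (ii) the polynomial bounds of Lemma~\ref{bblem} for $j_{1},i_{1},\jpp,\ipp,j_{2}',i_{2}'$, and (iii) the relation $\tau\ge a^{4}/(d+2-a^{2})$ from Lemma~\ref{atbounds}, to reduce the inequality $h'(r)\le 0$ to a polynomial inequality in $a^{2}$, $\tau$ and $r$ that can be verified by inspection.

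The main obstacle is this last step. A Taylor expansion at $r=0$ shows that the cross term and the dominant negative pieces are of the same order in $r$, so any crude $\rpp$-by-$\rpp$ majorization fails; the estimate has to be sharp at leading order. The template is the proof of Lemma~\ref{gdec}, which reduced the analogous monotonicity of $g'$ to a polynomial nonnegativity established via Lemma~\ref{atbounds} and the bounds of Lemma~\ref{bblem}; I expect a similar but more intricate polynomial reduction to close the argument here, with the positivity of $\gamma$ (not available for all $\sigma$) being exactly what keeps the sign patterns in $\delr$, $\rp$, and $\phi$ tractable.
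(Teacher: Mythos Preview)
Your treatment of $[\rstar,1]$ is correct and is essentially the paper's argument in different clothing: the paper rewrites $h'$ via the identity $(d-1)(\rho-r\rp)/r^{2}=\rpp-\delr$ to obtain two terms, each visibly nonpositive when $\rpp\ge 0$, $\delr\le 0$, and $\rho-r\rp\ge 0$. Your bound $\rpp\le (d-1)\phi/r^{2}$ is precisely $\delr\le 0$, so the two arguments coincide.

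The gap is on $[0,\rstar]$. You leave the hard step as an expectation (``I expect a similar but more intricate polynomial reduction''); that is not a proof, and the reduction you sketch is nontrivial---it is several lemmas' worth of Bessel estimation. The paper sidesteps all of this by a structural observation you have missed: the function $h$ does not depend on $\sigma$ at all. The only $\sigma$-dependence of the problem on $[0,\rstar]$ is through the constant $\gamma$ hidden inside $\rho=j_{1}(ar)+\gamma\,i_{1}(br)$. The $\sigma=0$ case of exactly this monotonicity was already established in \cite[Lemmas~18--22]{inequalitypaper}, and those lemmas use $\gamma$ only through the lower bound $\gamma\ge -a^{2}\jpp(a)/b^{2}\ipp(b)$ (the value of $\gamma$ at $\sigma=0$). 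Since $\gamma$ is increasing in $\sigma$ (Lemma~\ref{gammapos}), this lower bound persists for all $\sigma>0$, and the cited lemmas apply verbatim. So the paper's proof on $[0,\rstar]$ is a hypothesis check, not a new Bessel computation.

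Relatedly, your claim that ``$\gamma\ge 0$ \dots\ is precisely where the hypothesis $\sigma\ge 0$ enters'' is incorrect: Lemma~\ref{gammapos} gives $\gamma\ge 0$ for every $\sigma\in[-1/(d-1),1]$. The place $\sigma>0$ actually enters is the sharper inequality $\gamma\ge -a^{2}\jpp(a)/b^{2}\ipp(b)=\gamma\big|_{\sigma=0}$, which fails for $\sigma<0$ and is exactly what is needed to invoke the $\sigma=0$ lemmas on $[0,\rstar]$. If you attempt your polynomial reduction using only $\gamma\ge 0$, you should expect it to fail; the correct input is this stronger bound.
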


\begin{proof}
Recall $h(r)=\frac{3(d-1)}{r^4}(\rho-r\rp)^2+\tau\Big((\rp)^2+\frac{(d-1)}{r^2}\rho^2\Big)$.
 
 Consider $h'(r)$:
\begin{align}
 h'(r)&=\frac{-12(d-1)}{r^5}(\rho-r\rp)^2-\frac{6(d-1)}{r^3}(\rho-r\rp)\rpp\nonumber\\
&\qquad-\frac{2\tau(d-1)}{r^3}(\rho-r\rp)\rho+2\tau\rp\rpp. \nonumber\\
&=\frac{-2(d-1)}{r^3}(\rho-r\rp)\left(\frac{6}{r^2}(\rho-r\rp)+3\rpp+\tau\rho\right)+2\tau\rp\rpp \label{negrpp}
\end{align}
Writing $(d-1)(\rho-r\rp)/r^2=\rpp-\delr$, we can also rewrite this as
\begin{align}
&=\frac{-2}{r}(\rpp-\delr)\left(\frac{6}{r^2}(\rho-r\rp)+3\rpp+\tau\rho\right)+2\tau\rp\rpp \nonumber\\
&=\frac{2}{r}\delr\left(\frac{6}{r^2}(\rho-r\rp)+3\rpp+\tau\rho\right)\nonumber\\
&\qquad+\frac{-2}{r}\rpp\left(\frac{6}{r^2}(\rho-r\rp)+3\rpp+\tau(\rho-r\rp)\right) \label{posrpp}.
\end{align}

We know from Lemma~\ref{rhoproperties} that $\rpp<0$ on $(0,\rstar)$ and $\rpp>0$ on $(\rstar,1]$. We consider each case separately.

When $r\in(0,\rstar)$, we write $h'(r)$ as in \eqref{negrpp}. That $h'(r)<0$ in this case follows from the proof of the free plate isoperimetric inequality for $\sigma=0$ in \cite[Lemmas~18 through 22]{inequalitypaper}. These lemmas rely on properties of ultraspherical Bessel functions from \cite{besselpaper} and the following properties of the function $\rho$:
\begin{enumerate}
\item $\rho(r)=j_1(ar)+\gamma i_1(br)$ with $b=\sqrt{a^2+\tau}$, $0<a<\pll$, and $\gamma$ determined by the natural boundary conditions.
\item $(d+2)\tau>\omega^*>\tau d$.
\item $\rpp\leq 0$ for all $r$ under consideration
\item $\gamma\geq-a^2j_l^{\prime\prime}(a)/b^2i_1^{\prime\prime}(b)$ (the proof for $\sigma =0$ assumes equality and establishes a lower bound)
\item $\rho-r\rp\geq 0$ for all $r$ under consideration
\end{enumerate}

Because $\rstar\leq 1$, we meet condition (1) by our choice of trial functions. The bound on $\omega^*$, (2), is guaranteed by Lemma~\ref{wbounds} and Proposition~\ref{propLS}.  The bounds (3), (4), and (5) hold on $[0,\rstar]$ by Lemmas~\ref{gammapos} and~\ref{rhoproperties}. Thus we have met all the hypotheses of the lemmas from \cite{inequalitypaper}, and so $h'(r)\leq0$ for $r\in[0,\rstar]$ as desired.

When $r\in(\rstar,1]$, we write $h'(r)$ as in \eqref{posrpp}. We know $\rho-r\rp\geq0$ and $\delr\leq0$ here while $\rpp\geq0$, so both terms in \eqref{posrpp} are negative. Thus $h'(r)\leq0$ for all $r\in(0,1)$, completing the proof.
\end{proof}

\subsection{ Negative $\sigma$ (Auxetic case)}
Throughout this section we will use the notation $\alpha=|\sigma|$ to reduce risk of confusion over signs; note now that the range of values we consider is $0<\alpha<1/(d-1)$. 

Again, we will treat large and small values of $\tau$ separately. For this section, the ``small'' values of $\tau$ will be any which satisfy the inequality
\[
0<\tau\leq\frac{3(1+\alpha)a^2}{(d+2)(1-\alpha)}=:\tmax.
\]
Let us collect some results on bounds of $a$ and $b$ when $\tau$ is small.
\begin{lemma}\label{tmaxprops} Suppose $0<\tau\leq\tmax$. Then we have
\[
 a^2\leq\frac{3(1+\alpha)(d+2)}{d+5-\alpha(d-1)}:=\xmax \quad\text{and}\quad b^2\leq\frac{3(1+\alpha)}{1-\alpha}=:\bmaxsq.
\]
In particular, when $d=3$, we have $\tmax\leq 9a^2/5$ and $\xmax\leq45/14$. When our dimension $d\geq 4$, we have $\bmaxsq\leq 6$.
\end{lemma}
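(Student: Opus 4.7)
\textbf{Proof plan for Lemma~\ref{tmaxprops}.} The core input is Lemma~\ref{atbounds}, which gives $a^4/(d+2-a^2)\leq \tau$. The plan is to combine this with the hypothesis $\tau\leq \tmax = 3(1+\alpha)a^2/((d+2)(1-\alpha))$ to extract an upper bound on $a^2$, then deduce a bound on $b^2=a^2+\tau$, and finally specialize to $d=3$ and $d\geq 4$.

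First I would chain the two inequalities to get
\[
\frac{a^4}{d+2-a^2}\leq \frac{3(1+\alpha)a^2}{(d+2)(1-\alpha)}.
\]
Assuming $a>0$ (the case $a=0$ is trivial) I divide by $a^2$ and cross-multiply; since $a^2<d+2$ by Lemma~\ref{atbounds}, both denominators are positive, so the direction of the inequality is preserved. After multiplying out one gets $a^2\bigl[(d+2)(1-\alpha)+3(1+\alpha)\bigr]\leq 3(1+\alpha)(d+2)$. Expanding the bracket yields $d+5-\alpha(d-1)$, which produces the claimed bound $a^2\leq\xmax$.

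For $b^2=a^2+\tau$, using $a^2\leq \xmax$ in the definition of $\tmax$ gives
\[
\tmax\leq \frac{3(1+\alpha)}{(d+2)(1-\alpha)}\cdot\xmax = \frac{9(1+\alpha)^2}{(1-\alpha)(d+5-\alpha(d-1))}.
\]
Adding $\xmax$ and factoring the common factor $3(1+\alpha)/(d+5-\alpha(d-1))$ gives
\[
b^2\leq \frac{3(1+\alpha)}{d+5-\alpha(d-1)}\cdot\frac{(d+2)(1-\alpha)+3(1+\alpha)}{1-\alpha},
\]
and the numerator of the second factor is exactly $d+5-\alpha(d-1)$, so the bound collapses to $b^2\leq 3(1+\alpha)/(1-\alpha)=\bmaxsq$, as claimed. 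Nothing here is subtle; the main thing to watch is the cancellation of $d+5-\alpha(d-1)$, which is the whole reason the choice of $\tmax$ is natural.

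For the specialized statements, I would verify that $\xmax$ and $\bmaxsq$ are each monotone increasing in $\alpha$ on the admissible range, so it suffices to evaluate at the extreme value $\alpha=1/(d-1)$. When $d=3$, $\alpha\leq 1/2$, so $(1+\alpha)/(1-\alpha)\leq 3$, giving $\tmax\leq 9a^2/5$ and a direct computation at $\alpha=1/2$ gives $\xmax\leq 45/14$. When $d\geq 4$, $\alpha\leq 1/(d-1)\leq 1/3$, so $\bmaxsq\leq 3(1+1/3)/(1-1/3)=6$. There is no real obstacle — the proof is a two-line algebraic manipulation followed by monotonicity checks at the endpoints; the only place to be careful is preserving the sign when cross-multiplying by $(d+2-a^2)$, which is handled by the prior bound $a^2<d+2$.
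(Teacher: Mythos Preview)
Your proof is correct and follows essentially the same approach as the paper: combine Lemma~\ref{atbounds} with $\tau\le\tmax$ to bound $a^2$, then bound $b^2=a^2+\tau$ using $a^2\le\xmax$, and finally use monotonicity in $\alpha$ to specialize at $\alpha=1/(d-1)$. Your treatment is in fact slightly more streamlined than the paper's, which carries out an additional (and not strictly needed) analysis of $\xmax(d,1/(d-1))$ as a function of $d$.
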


\begin{proof}
First, we will use $\tau\leq\tmax$ to restrict the values of $a$ we need to consider. Since we have that $\tau>a^4/(d+2-a^2)$ by Lemma~\ref{atbounds}, the values of $a$ must satisfy
\[
 \frac{3(1+\alpha)a^2}{(d+2)(1-\alpha)}-\frac{a^4}{d+2-a^2}\geq0.
\]
Solving this inequality for $a^2$ gives us the desired bound $a^2\leq\xmax$.

Write $\xmax=\xmax(d,\alpha)$. By inspection, $\xmax$ is increasing in $\alpha$, and so for any fixed $d$ is maximized when $\alpha=1/(d-1)$. Evaluating $\xmax(d,\alpha)$ at this value and  differentiating formally then yields
\[
 \frac{\partial}{\partial d}\xmax(d,1/(d-1))= \frac{\partial}{\partial d}\left(\frac{3d(d+2)}{(d-1)(d-4)}\right)=\frac{3(d^2-8d-8)}{(d-1)^2(d+4)^2}.
\]
Due to the quadratic term in the numerator, this derivative is positive when $d\geq 9$ and negative when $2\leq d\leq 8$. The limit as $d\to\infty$ of $\xmax(d,1/(d-1))$ is $3$ by inspection. Thus by simple calculus we obtain 
\[
 \xmax(d,\alpha)\leq\frac{3d(d+2)}{(d-1)(d-4)}\leq\max\{\xmax(2,1),3\}=\frac{45}{14}.
 \]
We can also use the bounds on $\tau$ and $a^2$ to find an upper bound on $b^2$:
\begin{align*}
 b^2&=a^2+\tau\leq a^2\left(1+\frac{3(1+\alpha)}{(d+2)(1-\alpha)}\right)\\
 &\leq \xmax(d,\alpha)\left(1+\frac{3(1+\alpha)}{(d+2)(1-\alpha)}\right)=\frac{3(1+\alpha)}{1-\alpha}=:\bmaxsq.
\end{align*}
By inspection, this upper bound $\bmaxsq$ is increasing in $\alpha$; if we take $d\geq4$ then $\alpha\leq 1/(d-1)$ and so in this case our bound becomes 
\[ 
 b^2\leq\bmaxsq\leq\frac{3d}{d-2}.
\]
The right-hand side is decreasing in $d$, so when $d\geq4$ we see $\bmaxsq\leq 6$.
\end{proof}

We are now ready to state our results for monotonicity of the Rayleigh quotient numerator for the auxetic plate:
\begin{prop}\label{auxeticparmon} Suppose one of the following holds:
\begin{enumerate}
 \item The dimension $d\geq3$ with any $0<\alpha<1/(d-1)$ and any $\tau>0$
 \item The dimension $d=2$ with $\tau>\frac{3(1+\alpha)a^2}{(d+2)(1-\alpha)}$ and $0<\alpha<1$.
 \item The dimension $d=2$ with $\tau<\frac{3(1+\alpha)a^2}{(d+2)(1-\alpha)}$ and $0<\alpha<51/97$.
\end{enumerate}
Then we have partial monotonicity on the interval $[0,1]$ of the Rayleigh quotient numerator
\begin{align*}
 N[\rho]&=(1+\alpha)\left((\rpp)^2+\frac{3(d-1)}{r^4}(\rho-r\rp)^2\right)\\
&-\alpha(\delr)^2+\tau(\rp)^2+\frac{\tau(d-1)}{r^2}\rho^2.
\end{align*} 
\end{prop}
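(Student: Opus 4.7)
The plan mirrors the positive-$\sigma$ case (Lemma~\ref{monnum}) while neutralizing the negative contribution $-\alpha(\delr)^2$, which prevents a direct application of the earlier argument. The key algebraic maneuver is to write $\delr = \rpp - (d-1)(\rho-r\rp)/r^2$, substitute into $N[\rho]$, and complete the square. This yields
\begin{equation*}
N[\rho] = \left(\rpp + \frac{\alpha(d-1)(\rho-r\rp)}{r^2}\right)^2 + \frac{(1+\alpha)(3-\alpha(d-1))(d-1)}{r^4}(\rho-r\rp)^2 + \tau(\rp)^2 + \frac{\tau(d-1)}{r^2}\rho^2,
\end{equation*}
in which every coefficient is positive because $\alpha(d-1) < 1$. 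In particular, $N[\rho]$ is a sum of nonnegative terms, and the negative $-\alpha(\delr)^2$ contribution has been absorbed.

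On the exterior region $r > 1$, the linear extension of $\rho$ gives $\rpp = 0$ and $C := \rho - r\rp$ constant, so the first two terms together reduce to $(d-1)[3(1+\alpha) - \alpha(d-1)]C^2/r^4$, strictly decreasing in $r$. By Lemma~\ref{mondenom}, $\rp \geq 0$, so $\rho$ is increasing and the last term $\tau(d-1)\rho^2/r^2$ is also decreasing on $(1,\infty)$ (using $C \geq 0$ from Lemma~\ref{rhoproperties}). Every piece therefore decreases on $(1,\infty)$, and the real work lies on $[0,1]$.

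For $r \in [0,1]$ I would differentiate $N[\rho]$ directly and split into the subintervals $[0, \rstar]$ and $[\rstar, 1]$ on which $\rpp$ has definite sign, as in Lemmas~\ref{gdec} and~\ref{hdec}. The $\tau$-portion $\tau(\rp)^2 + \tau(d-1)\rho^2/r^2$ also appears in the positive case and should yield to the same techniques. The genuinely new obstacle is the completed-square term $\bigl(\rpp + \alpha(d-1)(\rho-r\rp)/r^2\bigr)^2$, whose derivative couples $\rpp$, $\rpp'$, and $\rho-r\rp$ in a nontrivial way; controlling its sign requires the Bessel identities and sign facts from Lemmas~\ref{Besselprops} and~\ref{Besselsigns} together with the bounds on $\gamma$ from Lemma~\ref{gammapos}.

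The three cases of the proposition reflect three regimes in which the required sign control can be established. For $d \geq 3$, the tight bound $\alpha < 1/(d-1) \leq 1/2$ makes the negative contributions small enough to be absorbed by the Hessian-type and $\tau$ terms for any $\tau > 0$. For $d = 2$ with $\tau > \tmax$, the hypothesis $\tau(1-\alpha) > 3(1+\alpha)a^2/4$ directly gives the sign inequalities on the relevant Bessel-function combinations without needing further restrictions on $\alpha$. The hardest case, which I expect to be the main obstacle, is $d = 2$ with $\tau \leq \tmax$: here Lemma~\ref{tmaxprops} bounds $a^2 \leq \xmax$ and $b^2 \leq \bmaxsq$, and the explicit Bessel bounds of Lemma~\ref{bblem} reduce the required derivative inequality to a polynomial inequality in $\alpha$, $a^2$, and $\tau$. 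The threshold $\alpha < 51/97$ presumably emerges as the largest $\alpha$ for which that polynomial remains nonnegative over the admissible parameter range, analogous to the $(d+2)/2$ threshold obtained from an analogous Bessel-bound polynomial argument in the positive-$\sigma$ case.
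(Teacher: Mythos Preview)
Your completed-square identity is correct and is a pleasant way to see $N[\rho]\geq 0$, but the proposal stops well short of a proof: the entire content of the proposition is the derivative estimate on $[0,1]$, and you only gesture at it.  Two concrete points show the gap.

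First, your plan to ``split into the subintervals $[0,\rstar]$ and $[\rstar,1]$'' imports the positive-$\sigma$ machinery in a place where it does not apply.  By Lemma~\ref{rhoproperties}, when $\sigma<0$ one has $\rstar=1$, so $\rpp\leq 0$ on all of $[0,1]$ and there is no subinterval split to exploit.  The positive-$\sigma$ lemmas (\ref{gdec}, \ref{hdec}) are organized around the decomposition $N=(1-\sigma)(\rpp)^2+(1-\sigma)h+\sigma g$ with $\sigma\geq 0$; neither $h$ nor $g$ arises from your completed-square form, and the sign of $\sigma$ is used in those arguments.

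Second, the paper's proof does not complete the square at all.  It subtracts off $(1+\alpha)(\rpp)^2$ (which is trivially partially monotone since $\rpp=0$ for $r>1$) and then differentiates the remainder $\tilde N$.  The crux is two distinct regroupings of $\tilde N'(r)$: for $\tau\geq\tmax$ one writes $\tilde N'/2$ as a negative multiple of $(\rho-r\rp)$ times a factor $l(r)$, plus $k(r)=(1-\alpha)\tau\rp\rpp+\alpha\,\delr\bigl(\tau\rp-(\delr)_r\bigr)$.  The sign of $k$ follows from $\rpp\leq 0$, $\delr\leq 0$, and crucially $\tau\rp-(\delr)_r\geq 0$ (Lemma~\ref{rhoproperties}), while $l(r)\geq 0$ is obtained by expanding in $j_1,j_3,i_1,i_3$; the threshold $\tmax$ is exactly where the $j_1$-coefficient in $l$ changes sign.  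For $\tau<\tmax$ one must instead compare $\gamma$ to an explicit rational $\gstar$ using the polynomial Bessel bounds of Lemma~\ref{bblem}: this is done for $d\geq 4$ in Lemma~\ref{smalltd4}, and for $d=2,3$ a \emph{different} regrouping (replacing $l$ by $\tilde l(r)=l(r)+\alpha\tau r\rp-\alpha r(\delr)_r$) is needed, handled in Lemma~\ref{smallt23}.  The constant $51/97$ is not a generic ``largest $\alpha$ for which a polynomial stays nonnegative'' but the specific value at which the concave-in-$\tau$ lower bound $B_2(\tau)$ for the $i_1$-coefficient, and the cubic $m_2(x)$ controlling $p(x,\tmin)$, remain nonnegative over the admissible $(x,\tau)$-region.

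In short: your decomposition buys nonnegativity of $N[\rho]$ cheaply but does not visibly help with monotonicity, and the steps you would need---the two regroupings of $\tilde N'$, the role of $\tau\rp-(\delr)_r$, and the $\gamma\geq\gstar$ reductions of Lemmas~\ref{smalltd4} and~\ref{smallt23}---are absent.
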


\begin{proof} Because we have defined our trial function so that $\rpp=0$ for $r>1$, we already have partial monotonicity of $(\rpp)^2$. We then focus on the remaining terms:
\begin{align*}
 \tilde{N}(r)&=N[\rho]-(1+\alpha)(\rpp)^2\\
&=(1+\alpha)\frac{3(d-1)}{r^4}(\rho-r\rp)^2-\alpha(\delr)^2+\tau(\rp)^2+\frac{\tau(d-1)}{r^2}\rho^2.
\end{align*}
Differentiating and regrouping the above yields
\begin{align*}
 \frac{1}{2}\tilde{N}'(r)&=-\frac{(d-1)}{r^3}(\rho-r\rp)\left(\frac{6(1+\alpha)}{r^2}(\rho-r\rp)+3(1+\alpha)\rpp+\tau\rho\right)\\
 &\qquad+\tau\rp\rpp-\alpha(\delr)(\delr)_r
\end{align*}
We will want to handle the term $\tau\rp\rpp-\alpha(\delr)(\delr)_r$ differently depending on our choice of $\tau$.

We will consider the ``large'' $\tau$ case first. For this we rewrite the $\tau\rp\rpp$ term using $\rpp=\delr+\frac{d-1}{r}(\rho-r\rp)$. We then obtain:
\begin{align*}
  \frac{1}{2}&\tilde{N}'(r)\\
 &=-\frac{(d-1)}{r^3}(\rho-r\rp)\left(\frac{6(1+\alpha)}{r^2}(\rho-r\rp)+3(1+\alpha)\rpp+\tau\rho-\alpha\tau r\rp\right)\\
 &\qquad+(1-\alpha)\tau\rp\rpp+\alpha\delr\Big(\tau\rp-(\delr)_r\Big).
\end{align*}
Consider the second line, which we will denote by $k(r)$:
\[
 k(r)=(1-\alpha)\tau\rp\rpp+\alpha\delr\Big(\tau\rp-(\delr)_r\Big)
\]
Because $\rpp\leq0$ and the parameter $0<\alpha\leq1$, the first term in $k(r)$ is nonpositive. The second term will be nonpositive for any values of $r$ such that $(\delr)_r\leq0$, since $\rp\geq0$ and $\delr\leq0$ for all $r\in[0,1]$.

We will thus assume $(\delr)_r>0$ and show that $k(r)\leq0$ for these values of $r$. In this case, the sign of the second term in $k(r)$ depends on the sign of $\tau\rp-(\delr)_r$. By Lemma~\ref{rhoproperties} this is nonnegative, and so $k(r)\leq0$ as desired.

Let us now consider the other term of $\frac{1}{2}\tilde{N}'(r)$:
\[
 -\frac{(d-1)}{r^3}(\rho-r\rp)\left(\frac{6(1+\alpha)}{r^2}(\rho-r\rp)+3(1+\alpha)\rpp+\tau\rho-\alpha\tau r\rp\right).
\]
By positivity of $\rho-r\rp$ on $[0,1]$, the above will be negative if the final factor is negative. We will denote this term by $l(r)$:
\[
 l(r)=\frac{6(1+\alpha)}{r^2}(\rho-r\rp)+3(1+\alpha)\rpp+\tau\rho-\alpha\tau r\rp.
 \]
As in the $\sigma>0$ case, we write $\rpp$ in terms of $\rho,\rp,\delr$ and split $\tau\rho$ into two pieces:
\[
 l(r)=\left(\frac{3(d+1)(1+\alpha)}{r^2}+\alpha\tau\right) (\rho-r\rp)
 +3(1+\alpha)\delr+(1-\alpha)\tau\rho.
\]
Then expressing $\rho$ and its derivatives in terms of $j_1, i_1, j_3$ and $i_3$ using properties of Bessel functions, we obtain:
\begin{align*}
 l(r)&=\left((1-\alpha)\tau-\frac{3(1+\alpha)a^2}{d+2}\right)j_1(ar)+\gamma \left((1-\alpha)\tau+\frac{3(1+\alpha)b^2}{d+2}\right)i_1(br)\\
 &\qquad+\left(\frac{3(d+1)(1+\alpha)}{d+2}\right)\Big(a^2j_3(ar)+\gamma b^2 i_3(br)\Big)+\alpha\tau(\rho-r\rp).
\end{align*}
We now have four terms to consider. Because $\rho-r\rp\geq0$ for all $r\in[0,1]$, our parameter $0<\alpha\leq1$ regardless of dimension, and properties of $a$ and $j_3$ and $i_3$, both terms in the last line are nonnegative. By inspection, the coefficient of the $i_1(br)$ term will be positive. 

Thus the sign of $l(r)$ (and hence partial monotonicity of $N[\rho]$) hinges on the term involving $j_1(ar)$. When this term is positive, all terms in $l(r)$ are nonnegative and so $l(r)\geq0$ on $[0,1]$. The $j_1(a,r)$ term's coefficient is positive when
\[
\tau\geq\frac{3(1+\alpha)a^2}{(d+2)(1-\alpha)}=\tmax,
\]
which is precisely our ``large'' $\tau$ condition. Thus we've shown $l(r)\geq 0$ for these values of $\tau$, and hence $N[\rho]$ has the desired partial monotonicity.

If $\tau<\tmax$ and $d\geq 4$, then by Lemma~\ref{smalltd4} we again have that $l(r)\geq0$.

This leaves the $\tau<\tmax$ case for dimensions $d=2,3$. In this case, one can choose $a$, $\tau$, and $r$ so that the function $l(r)$ will be negative, and so we need to change how we group the terms in $\tilde{N}'(r)$ in order to achieve partial monotonicity. This time we'll rewrite the $-\alpha\delr(\delr)_r$ term:
\begin{align*}
 \tilde{N}'&(r)=-\frac{(d-1)}{r^3}(\rho-r\rp)\left(\frac{6(1+\alpha)}{r^2}(\rho-r\rp)+3(1+\alpha)\rpp+\tau\rho\right)\\
 &\qquad+\tau\rp\rpp-\alpha\left(\rpp-\frac{d-1}{r^2}(\rho-r\rp)\right)(\delr)_r\nonumber\\
  &=-\frac{(d-1)}{r^3}(\rho-r\rp)\left(\frac{6(1+\alpha)}{r^2}(\rho-r\rp)+3(1+\alpha)\rpp+\tau\rho-\alpha r(\delr)_r\right)\\
 &\qquad+(1-\alpha)\tau\rp\rpp+\rpp\Big(\tau\rp-(\delr)_r\Big).
\end{align*}
As before, note that $(1-\alpha)\tau\rp\rpp$ and $\rpp\Big(\tau\rp-(\delr)_r\Big)$ will both be nonpositive. Thus it suffices to show the nonnegativity of
\[
\tilde{l}(r):=\frac{6(1+\alpha)}{r^2}(\rho-r\rp)+3(1+\alpha)\rpp+\tau\rho-\alpha r(\delr)_r.
\]
If $\tau<\tmax$ and $d=3$ with $0<\alpha<1/(d-1)$ or $d=2$ with $\alpha<51/97$, then by Lemma~\ref{smallt23} this is nonnegative for any $r\in[0,1]$, completing our proof.\end{proof}

\begin{lemma}\label{smalltd4} For dimensions $d\geq 4$ and all $0<\alpha<1/(d-1)$, if $\tau$ and $a$ are such that
\[
 \tau<\frac{3(1+\alpha)a^2}{1-\alpha}, 
\]
then the function
\[
 \tilde{l}(r)=\frac{6(1+\alpha)}{r^2}(\rho-r\rp)+3(1+\alpha)\rpp+\tau\rho-\alpha\tau r\rp
 \]
is nonnegative for all $r\in[0,1]$.
\end{lemma}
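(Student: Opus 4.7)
The plan is to exploit the Bessel-function expansion of $\tilde{l}(r)$ already carried out inside the proof of Proposition~\ref{auxeticparmon}. Writing $\tau\rho - \alpha\tau r\rp = (1-\alpha)\tau\rho + \alpha\tau(\rho - r\rp)$, substituting $\rpp = \delr + (d-1)(\rho - r\rp)/r^2$, and then using the identities of Lemma~\ref{Besselprops} to express $(\rho - r\rp)/r^2$ in terms of $j_1, j_3, i_1, i_3$, one obtains
\begin{align*}
\tilde{l}(r) &= \left((1-\alpha)\tau - \frac{3(1+\alpha)a^2}{d+2}\right)j_1(ar) + \gamma\left(\frac{3(1+\alpha)b^2}{d+2} + (1-\alpha)\tau\right) i_1(br) \\
&\quad + \frac{3(d+1)(1+\alpha)}{d+2}\left(a^2 j_3(ar) + \gamma b^2 i_3(br)\right) + \alpha\tau(\rho - r\rp).
\end{align*}
Every term is manifestly nonnegative on $[0,1]$ by Lemmas~\ref{Besselsigns},~\ref{gammapos}, and~\ref{rhoproperties}, except possibly the $j_1(ar)$ term, whose coefficient is already nonnegative when $\tau \geq \tmax = 3(1+\alpha)a^2/((d+2)(1-\alpha))$. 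The nontrivial regime is therefore $0 < \tau < \tmax$.

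In that regime my strategy is to absorb the negative $j_1$ term into the positive $\gamma\, i_1$ term alone, treating $j_3$, $i_3$, and $\rho - r\rp$ as a safety margin. I would apply the linear bounds $j_1(ar) \leq c_{1,0}\, ar$ and $i_1(br) \geq c_{1,0}\, br$ from Lemma~\ref{bblem}, the universal lower bound $\gamma \geq a^2 j_2'(a)/(b^2 i_2'(b))$ from Lemma~\ref{gammapos}, and then the polynomial estimates $j_2'(a) \geq n_0 a - n_1 a^3$ and $i_2'(b) \leq n_0 b + n_1 K_n(M) b^3$, also from Lemma~\ref{bblem}. Lemma~\ref{tmaxprops} supplies the envelopes $a^2 \leq \xmax$ and $b^2 \leq \bmaxsq = 3(1+\alpha)/(1-\alpha)$; crucially, for $d \geq 4$ the constraint $\alpha < 1/(d-1) \leq 1/3$ forces $M := \bmaxsq \leq 6$, so $K_n(M)$ is bounded by an explicit absolute constant, and the ratio $n_1/n_0 = 1/(d+4)$ is further dimensionally small.

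After these substitutions, $\tilde{l}(r) \geq 0$ reduces to a polynomial inequality in $x := a^2$ and $\tau$, with coefficients depending only on $d$ and $\alpha$, to be verified on the rectangle $0 < \tau \leq \tmax$, $0 < x \leq \xmax$. I would mimic the argument in Lemma~\ref{gdec}: first show that the coefficients of positive powers of $\tau$ in the resulting polynomial are nonnegative (so the inequality is worst at the lower envelope $\tau = x^2/(d+2-x)$ from Lemma~\ref{atbounds}), then evaluate at that endpoint and confirm nonnegativity using $x \leq \xmax$ and the explicit value of $K_n(6)$. The main obstacle will be controlling the interplay of the factors $1/(d+2)$, $1/(d+4)$, $3(1+\alpha)/(1-\alpha)$, and $K_n(M)$; the $d \geq 4$ hypothesis is used precisely to keep $M \leq 6$, since in lower dimensions $\bmaxsq$ can be arbitrarily large and the exponential-in-$M$ growth of $K_n$ swamps the positive $i_1$ contribution, which is exactly why dimensions $d = 2, 3$ require the alternate grouping handled in Lemma~\ref{smallt23}.
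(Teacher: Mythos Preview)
Your proposal is correct and follows essentially the same route as the paper: the same Bessel expansion into $j_1,i_1,j_3,i_3$ and $\alpha\tau(\rho-r\rp)$, the same discarding of the manifestly positive pieces, the same linear bounds $j_1\le c_{1,0}ar$ and $i_1\ge c_{1,0}br$, the same rational lower bound $\gamma\ge a^2 j_2'(a)/(b^2 i_2'(b))$ combined with the $j_2'$/$i_2'$ estimates from Lemma~\ref{bblem}, the same use of $\bmaxsq\le 6$ for $d\ge4$ to control $K_n$, and the same final evaluation at the lower envelope $\tau=x^2/(d+2-x)$. The one device the paper adds that you do not mention is a preliminary reduction to the extreme value $\alpha=1/(d-1)$: since the target threshold $\gstar$ is increasing in $\alpha$ while $\gamma$ is decreasing, $\gamma-\gstar$ is worst at that endpoint, which collapses the verification to a two-parameter polynomial $p(x,\tau)$ rather than one with $\alpha$ still present.
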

\begin{proof}
We will find it useful to rewrite $\tilde{l}(r)$ in terms of $j_1$, $j_3$, $i_1$, and $i_3$ as we did above. Then for these small $\tau$ values, the coefficient of $j_1(ar)$ in $l(r)$ is negative while all others are positive, so by Lemma~\ref{bblem} and our work in the early part of the proof of Proposition~\ref{auxeticparmon}, we have 
\begin{align*}
\tilde{l}(r)&\geq \left((1-\alpha)\tau-\frac{3(1+\alpha)a^2}{d+2}\right)j_1(ar)+\gamma \left((1-\alpha)\tau+\frac{3(1+\alpha)b^2}{d+2}\right)i_1(br)\\
&\geq \left((1-\alpha)\tau-\frac{3(1+\alpha)a^2}{d+2}\right)c_0ar+\gamma \left((1-\alpha)\tau+\frac{3(1+\alpha)b^2}{d+2}\right)c_0br.
\end{align*}
Thus for small $\tau$ values and dimensions $d\geq 4$, it suffices to show that
\begin{equation}\label{smallt}
\left((1-\alpha)\tau-\frac{3(1+\alpha)a^2}{d+2}\right)a+\gamma \left((1-\alpha)\tau+\frac{3(1+\alpha)b^2}{d+2}\right)b\geq 0.
\end{equation}
By solving this inequality for $\gamma$, we see that the above inequality holds if and only if
\[
 \gamma\geq \frac{a}{b}\frac{3a^2(1+\alpha)-(d+2)(1-\alpha)\tau}{3b^2(1+\alpha)+(d+2)(1-\alpha)\tau}=:\gstar.
\]
Note that if we view this lower bound $\gstar$ as a function of $\alpha$ with the variables $a$, $\tau$, and $d$ seen as independent, then differentiating formally yields
\[
 \frac{d}{d\alpha}\gstar=\frac{a}{b}\frac{6(d+2)(a^2+b^2)\tau}{(3b^2(1+\alpha)+(d+2)(1-\alpha)\tau)^2}\geq 0.
\]
Thus, holding all other variables constant, $\gstar$ is increasing in $\alpha$. Recall from Lemma~\ref{gammapos} that similarly, for fixed $\tau,d,a$, we have that $\gamma$ is increasing in $\sigma$, and hence decreasing in $\alpha$. Then $\gamma-\gstar$ may be viewed as a decreasing function of $\alpha$ and so 
\[
 \gamma-\gstar\geq \gamma\left(\frac{1}{d-1}\right)-\gstar\left(\frac{1}{d-1}\right).
\]
Thus it suffices to prove the inequality in the extreme case $\alpha=1/(d-1)$. For this value of $\alpha$, we have
\[
 \gstar=\frac{a}{b}\cdot\frac{3a^2d-(d+2)(d-2)\tau}{3b^2d+(d+2)(d-2)\tau}.
\]
Using Bessel identities from Lemma~\ref{Besselprops}, in the $\alpha=1/(d-1)$ case we find
\[
 \gamma=\frac{-da^2\jpp(a)-a^2j_1(a)}{db^2\ipp(b)-b^2i_1(b)}=\frac{a^2j_2'(a)}{b^2i_2'(b)}.
\]
We wish to establish a lower bound on our $\gamma$ that is a rational function of $a$ and $b$, using our results from Lemma~\ref{bblem}. The upper bound $M=\bmaxsq=3d/(d-2)$ of $b^2$ is easily seen from Lemma~\ref{tmaxprops} to be decreasing as a function of $d$, while the constant $K$ is increasing in $M$. Since we only want dimensions $d\geq 4$ we may take the value of $M$ when $d=4$, yielding $K<5/3$.

Thus for the values of $\tau$, $a$ and $b$ under consideration, with $n_0$ and $n_1$ as defined in Lemma~\ref{bblem}, we have:
\[
 \gamma\geq \frac{n_0a^3-n_1a^5}{n_0b^3+\frac{5}{3}n_1b^5}=\frac{a^3((d+4)-a^2)}{b^3\left((d+4)+\frac{5}{3}b^2\right)}
 \]
 by factoring out $n_1$, since $n_0/n_1=d+4$. We now have that
\[
 \gamma-\gstar \geq \frac{a}{b}\left(\frac{a^2((d+4)-a^2)}{b^2\left((d+4)+\frac{5}{3}b^2\right)}-\frac{3da^2-(d^2-4)\tau}{3db^2+(d^2-4)\tau}\right).
 \]
Rewriting the right-hand side as a single quotient with positive denominator, we see that $\gamma-\gstar\geq0$ whenever the following expression is nonnegative:
\begin{equation}\label{num1}
a^2\Big((d+4)-a^2\Big)(3db^2+(d^2-4)\tau)-b^2\left((d+4)+\frac{5}{3}b^2\right)(3da^2-(d^2-4)\tau).
\end{equation}
Since $b^2=a^2+\tau$ and all powers of $a$ appearing in \eqref{num1} are even, we can set $x=a^2$ and rewrite the above as a polynomial $P$ in $x$ and $\tau$:
\begin{align*}
 p(x,\tau)&:=\frac{5}{3}(d^2-4)\tau^3+\frac{1}{3}\Big(5(2d^2-3d-8)x+3(d^2-4)(d+4)\Big)\tau^2\\
  &\qquad+\frac{1}{3}\Big((2d^2-39d-8)x+6(d^2-4)(d+4)\Big)x\tau -8dx^3
\end{align*}
We will now show that $p(x,t)$ is positive for all dimensions $d\geq 4$ and all $\tau$ and $x$ such that
\[
 0\leq x\leq \frac{3d(d+2)}{(d+4)(d-1)}:=\xmax,
 \qquad\frac{x^2}{d+2-x}<\tau<\tmax.
\]
The coefficient of $\tau^3$ in $p$ is positive by inspection. Next we look at the coefficient of $\tau^2$. The quadratic $2d^2-3d-8$ has roots at $d\approx -1.4, 2.9$ and so is positive for $d\geq 4$. Because we only consider $x\geq0$, we may conclude the coefficient of $\tau^2$ is positive.

The coefficient of $\tau$ requires a little more work. Again, $x\geq0$, so if $2d^2-39d-8$ is positive, we're done. However, this quadratic has roots at $d\approx-0.2, 19.7$. So for $d\geq20$, we have positivity of this coefficient. If \\$4\leq d\leq 19$, however, then 
\begin{align*}
 (2d^2-39d&-8)x+6(d^2-4)(d+4)\\
 &\geq (2d^2-39d-8)\xmax+6(d^2-4)(d+4)\\
 &=\frac{3(d+2)(2d^4+12d^3-51d^2-72d+64)}{(d-1)(d+4)}.
\end{align*}
The quartic term has four real roots that may be numerically estimated by standard techniques (eg, by Newton's method), occurring at $d\approx-8.5$, $-1.6$, $0.6$ and $3.5$. Thus for $d\geq4$, the above expression is positive, and hence so is the coefficient of $\tau$ in $p$.

Since the coefficients of positive powers of $\tau$ are all positive, we may conclude $p(x,\tau)$ is increasing in $\tau$, and so minimized when $\tau$ is. Evaluating $p$ at our minimum value $\tau=x^2/(d+2-x)$ and simplifying, we can write
\begin{align*}
 &p\left(x,\frac{x^2}{d+2-x}\right)=\frac{(d+2)x^3}{3(d+2-x)^3}m(x), \qquad\text{where}\\
 &m(x):=-3(d+2)x^3+3(3d^2-d-16)x^2\\
 &\qquad\qquad-(d+2)(7d^2-15d-64)x+6(d+2)^3(d-4)
\end{align*}
The quadratic $7d^2-15d-64$ has two real roots at $d\approx-2.1, 4.3$ and so the linear term in $x$ has a negative coefficient for $d\geq 5$. In this case, we have
\begin{align*}
 m(x)&\geq 3\Big((d+2)\xmax+(3d^2-d-16)\Big)x^2\\
 &\qquad-(d+2)(7d^2-15d-64)\xmax+6(d+2)^3(d-4)\\
 &=3x^2\frac{3d^4+5d^3-31d^2-32d+64}{(d-1)(d+4)}\\&\qquad+\frac{3(d+2)^2(d^2-d-8)(2 d^2-3d-8)}{(d-1)(d+4)}.
\end{align*}
The coefficient of $x^2$ will be positive when the numerator is. Since the quartic $3d^4+5d^3-31d^2-32d+64$ has four real roots at $d\approx -3.2,-2.2, 1.1$ and $2.6$, this term is always positive for $d\geq 5$. The two quadratics in the numerator of the constant term have all real roots at $d\approx -2.4, -1.4, 2.9,$ and $3.4$, and so this term is also positive for $d\geq 5$. Thus $m(x)\geq0$ for $0\leq x\leq\xmax$ when $d\geq 5$.

If $d=4$, then $\xmax=3$ and so for $0\leq x\leq 3$, we have
\[
 m(x)=6x(-x^2+14x+12)\geq6x(-9+14x+12)>0.
\]

Thus we have show that for all $d\geq4$ and $\tau, x=a^2$ under consideration, $p(x,\tau)$ and hence $\gamma-\gstar$ are nonnegative.
\end{proof}

\begin{rmk} This method of proof cannot be extended to the physical case\\ $d=2,3$; numerical investigations show that there exist values of $\tau$, $a$,  $\alpha$, and $r$ in those dimensions for which $l(r)<0$. We will need to group the terms in $\tilde{N}'(r)$ differently to obtain a proof for these small dimensions and small $\tau$. This alternate grouping can also be used to prove the small $\tau$ case for higher dimensions, however it is more cumbersome.
\end{rmk}

\begin{lemma}[Small $\tau$ for $d=2,3$] \label{smallt23}Suppose $d=3$ and $0<\alpha<1/2$ or $d=2$ and $\alpha\leq51/97$, and $\tau<\tmax$. Then we have that
\begin{equation}\label{foo}
\frac{6(1+\alpha)}{r^2}(\rho-r\rp)+3(1+\alpha)\rpp+\tau\rho-\alpha r\delr(\delr)_r\geq0
\end{equation}
and hence $N[\rho]$ is partially monotone on $\Ostar$.
\end{lemma}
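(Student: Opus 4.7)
The plan is to reduce the claim \eqref{foo} to a polynomial inequality in $x:=a^2$ and $\tau$, in the spirit of Lemma~\ref{smalltd4}, but with the term $-\alpha\tau r\rp$ replaced by $-\alpha r(\delr)_r$ so that a different grouping of Bessel contributions is needed. This regrouping is why the threshold now falls to the narrow window $d=2,3$ (with the delicate $\alpha\leq 51/97$ cutoff when $d=2$).

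First, I would write out $\delr$ and $(\delr)_r$ in closed form on $[0,1]$: since $\rho=j_1(ar)+\gamma i_1(br)$ satisfies $\Delta(\rho Y_1)=\delr\,Y_1$, we have $\delr=-a^2 j_1(ar)+\gamma b^2 i_1(br)$ and $(\delr)_r=-a^3 j_1'(ar)+\gamma b^3 i_1'(br)$. Using the recurrence identities from Lemma~\ref{Besselprops} to re-express $j_1'$ and $i_1'$ via $j_1,j_3$ and $i_1,i_3$, and using the expansion of $\rho-r\rp$ and $\rpp$ already carried out in the proof of Proposition~\ref{auxeticparmon}, I would expand
\[
\tilde l(r)=A_1(a,b,\tau,\alpha,d)\,j_1(ar)+B_1(a,b,\tau,\alpha,d)\,i_1(br)+C_3\bigl(a^2j_3(ar)+\gamma b^2 i_3(br)\bigr)+D\,(\rho-r\rp),
\]
where each coefficient is an explicit elementary function of the parameters. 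The $j_3,i_3$ and $\rho-r\rp$ pieces are nonnegative on $[0,1]$ by Lemma~\ref{Besselsigns} and Lemma~\ref{rhoproperties}, so the analysis reduces to controlling the $j_1$ and $i_1$ terms.

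Second, applying the linear Bessel bounds of Lemma~\ref{bblem}, namely $j_1(ar)\leq c_{1,0}ar$ and $i_1(br)\geq c_{1,0}br$, I would bound
\[
\tilde l(r)\geq c_{1,0}r\bigl(a A_1+\gamma\, b B_1\bigr),
\]
whenever $A_1\leq 0$ (the hard regime, corresponding to $\tau<\tmax$). Solving the resulting inequality for $\gamma$ produces a target lower bound $\gamma\geq\gstar(a,b,\tau,\alpha,d)$. By the monotonicity of $\gamma$ in $\sigma$ (Lemma~\ref{gammapos}) and a parallel formal monotonicity check of $\gstar$ in $\alpha$, it is enough to verify $\gamma\geq\gstar$ at the extreme value $\alpha=1/(d-1)$, where the Bessel identities collapse $\gamma$ to the clean form $\gamma=a^2 j_2'(a)/\bigl(b^2 i_2'(b)\bigr)$. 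Using the bounds of Lemma~\ref{bblem} on $j_2'$ and $i_2'$, together with the explicit ceilings $b^2\leq\bmaxsq$ and $a^2\leq\xmax$ furnished by Lemma~\ref{tmaxprops} (specialized here to $d=2,3$), I would then clear denominators and obtain a polynomial inequality $p(x,\tau)\geq 0$ on the parameter box
\[
0\leq x\leq\xmax,\qquad \tfrac{x^2}{d+2-x}\leq\tau\leq\tmax.
\]

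Third, I would dispatch $p(x,\tau)$ dimension by dimension. As in Lemma~\ref{smalltd4}, I expect the coefficients of the positive powers of $\tau$ to be nonnegative throughout the admissible $x$-range, so that $p$ is minimized by substituting the extreme value $\tau=x^2/(d+2-x)$. The result is a polynomial in $x$ alone, which I would factor as $x^3$ times a lower-degree polynomial $m_d(x)$ and show $m_d(x)\geq 0$ on $[0,\xmax]$ by locating its real roots (Newton's method or explicit factoring for the low-dimensional cases). The main obstacle is exactly here: in dimension $d=2$ the threshold $\alpha\leq 51/97$ must drop out of this last step as the sharp condition guaranteeing $m_2(x)\geq 0$ on the whole interval $[0,\xmax(2,\alpha)]$. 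I expect it to arise from requiring that the coefficient of the lowest surviving power of $x$ in $m_2(x)$ stays nonnegative, which should yield a rational inequality in $\alpha$ whose boundary is $\alpha=51/97$. For $d=3$ the corresponding polynomial analysis should go through for the full range $0<\alpha<1/2$ without a finer cutoff, paralleling the cleaner behavior seen throughout in three dimensions.
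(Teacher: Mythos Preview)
Your overall architecture matches the paper's: expand $\tilde l(r)$ via Bessel identities into a combination $A\,j_1(ar)+\gamma B\,i_1(br)+C\,j_3(ar)+\gamma D\,i_3(br)$, dispose of the $j_3,i_3$ terms by sign, apply the linear bounds of Lemma~\ref{bblem} to reduce to $\gamma\geq\gstar$, bound $\gamma$ below using $\alpha=1/(d-1)$ and the $j_2',i_2'$ estimates, and finish with a polynomial analysis in $x=a^2$ and $\tau$. So the route is right.

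There is, however, a real gap in your plan: you never verify that the $i_1$ coefficient $B$ is positive. The step ``$\tilde l(r)\geq c_{1,0}r(aA+\gamma bB)$'' requires $B\geq 0$ in order to replace $i_1(br)$ by its linear lower bound, and $B$ here contains a negative contribution $-\alpha b^4 r^2/(d+2)$ coming from the $-\alpha r(\delr)_r$ term. In the paper this is a substantial separate step: one checks that $B>0$ on the full parameter box, and for $d=2$ this check is exactly where the cutoff $\alpha\leq 51/97$ first enters (it does \emph{not} emerge only from the final $m_2(x)$ analysis as you guessed). The restriction is then used again when bounding $\xmid$ and in the $m_2$ step, but you will not get it to ``drop out'' at the end if you have not already imposed it to make $B>0$.

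Two smaller corrections. First, the hard regime is not all of $\tau<\tmax$: the $j_1$ coefficient $A$ depends on $r$, and its worst case $r=1$ gives a strictly smaller threshold $\tmid<\tmax$ below which $A<0$; the paper introduces $\xmid,\bmidsq$ tied to $\tmid$, and these tighter ceilings are what make the $K$-constant in the $i_2'$ bound manageable for $d=2,3$. Second, the paper does \emph{not} reduce $\gstar$ to $\alpha=1/(d-1)$ here; it keeps $\gstar$ general in $\alpha$ and only pushes $\gamma$ down to the $\alpha=1/(d-1)$ value, so the resulting polynomial $p(x,\tau)$ still carries $\alpha$. Your proposed parallel monotonicity of $\gstar$ in $\alpha$ is not obvious for this more complicated $\gstar$ and is not needed.
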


\begin{rmk} The upper bound on $\alpha\leq51/97$ in the $d=2$ case is specific to our method of proof and not a strict upper bound on $\alpha$ values for which the inequality~\eqref{foo} holds. Numerical estimates suggest inequality~\eqref{foo} holds for all $0<\alpha<1$. 
\end{rmk} 

\begin{proof} Using the notation of Lemma~\ref{tmaxprops}, since $\tau\leq\tmax$, we also have $0\leq a^2\leq\xmax$ and $0\leq b\leq\bmax$.

As in our previous proofs, we use Bessel identities to write $\rho-r\rho'$ and $\rho'$ in terms of Bessel $j_1, i_1, j_3$, and $i_3$. Then \eqref{foo} can be rewritten as
\begin{align*}
 &\left(\tau-\frac{3(1+\alpha)}{d+2}a^2+\frac{\alpha a^2}{d+2}(d+2-a^2r^2)\right)j_1(ar)\\
&\qquad+\gamma \left(\tau+\frac{3(1+\alpha)}{d+2}b^2-\frac{\alpha b^2}{d+2}(d+2+b^2r^2)\right)i_1(br)\\
 &\qquad+\frac{a^2}{d+2}\Big(3(1+\alpha)(d+1)-\alpha a^2r^2\Big)j_3(ar)\\
 &\qquad+\gamma\frac{b^2}{d+2}\Big(3(1+\alpha)(d+1)+\alpha b^2r^2\Big)i_3(br)\\
 &=:A j_1(ar)+\gamma Bi_1(br)+C j_3(ar)+\gamma Di_3(br)
\end{align*}
The coefficient $D$ of $i_3(br)$ is positive by inspection. Since $r\in[0,1]$ and\\ $a^2\in[0,d+2]$, the coefficient $C$ of $j_3(ar)$ can be bounded below as follows:
\[
 C\geq \frac{a^2}{d+2}\Big(3(1+\alpha)(d+1)-\alpha (d+2)\Big)=\frac{(3 + 3 d + \alpha + 2 d \alpha)a^2}{d+2}.
\]
Hence $C$ is positive. 

Let us look next at the coefficient $B$ of $i_1(br)$; our goal is to show that it is positive for those values of $b$ satisfying $b\leq\bmax$. We'll first rewrite $B$ as
\[
 B=\frac{(d+2)\tau+b^2(3-\alpha(d-1)-\alpha r^2b^2)}{d+2}.
\]
Note that $B$ is decreasing in both $r$ and $\alpha$. 

When $d=3$, we have $\alpha<1/2$, and so writing $b^2=a^2+\tau$ we obtain
\[
 B\geq \frac{1}{10}\Big(-\tau^2+2(7-a^2)\tau+4a^2-a^4\Big)=:B_3(\tau).
\]
The lower bound $B_3$ is concave in $\tau$, and so is minimized when $\tau=0$ or $\tau=\tmax$. Recall that for $d=3$ we have $\tmax\leq 9a^2/5$. Then evaluating, we see:
\begin{align*}
 B_3(0)&=\frac{1}{10}a^2(4-a^2)\geq\frac{1}{10}a^2(4-\xmax),\\
 B_3(\tmax)&=\frac{1}{125}a^2(365-98a^2)\geq\frac{1}{12250}a^2(3.72-\xmax),
\end{align*}
both of which are nonnegative, since $\xmax\leq45/14<3.22$.

When $d=2$, the coefficient $B$ is still decreasing in $r$ and $\alpha$. Since we are restricting ourselves to $\alpha\leq51/97$, this gives us
\[
 4B\geq -\frac{51}{97}\tau^2+\frac{628-102a^2}{97}\tau+\frac{a^2(80-17a^2)}{97}=:B_2(\tau).
\]
Since $B_2$ is convex in $\tau$, we look at $\tau=0$ and $\tau=\tmax\leq2.45a^2$:
\begin{align*}
 B_2(0)&\geq0.52(4.7-a^2)a^2\\
 B_2(\tmax)&\geq 6.12(2.04-a^2)a^2.
\end{align*}
When $\alpha\leq51/97$, we have $a^2\leq\xmax<2.84$; then both of these terms are positive.

Finally, we need to consider the sign of $A$. We'll write $A$ as
\[
 A=\frac{(d+2)\tau-a^2(3-\alpha(d-1)+\alpha a^2r)}{d+2}.
\]
Since $A$ is decreasing in $r$, we can bound it below by setting $r=1$. Note also that if $\tau$ is large enough relative to $a$, then $A$ will be nonnegative. However, if $\tau$ is too small, specifically if
\[
 \tau<\frac{(3-\alpha(d-1)+\alpha a^2)a^2}{d+2}=:\tmid,
\]
then we have $A<0$ for $r=1$.

For these values of $\tau$, we again apply our lower bound on $\tau$ from Lemma~\ref{atbounds} and solve for $a^2$, obtaining the bound
\[
 a^2\leq \frac{2d\alpha+\alpha-5-d+\sqrt{d^2+10d+25-2\alpha(2d^2+5d-7)+9\alpha^2}}{2\alpha}=:\xmid.
\]
Similarly, we set bound $\bmidsq:=\tmid+\xmid$, which gives us an upper bound on the values of $b^2$ for which $A$ is negative. Both $\xmid$ and $\bmidsq$ are extremely cumbersome to deal with in general dimension $d$; fortunately we only need to treat the cases $d=2$ and $d=3$. 

When $d=3$, our bounds $\xmid$ and $\bmidsq$ become
\[
 \xmid=\frac{7\alpha-8+\sqrt{64-\alpha(52-9\alpha)}}{2\alpha}\qquad\bmidsq=\frac{3\alpha-2+\sqrt{64-\alpha(52-9\alpha)}}{2}.
\]
If we differentiate $\bmidsq$ with respect to $\alpha$ and simplify by writing as a single quotient and rationalizing the numerator, we obtain
\[
 \frac{\partial\bmidsq}{\partial\alpha}=\frac{-100}{(2\sqrt{64-\alpha(52-9\alpha)})(3\sqrt{64-\alpha(52-9\alpha)}+(26-9\alpha))},
\]
which is negative. Thus $\bmidsq$ is maximal when $\alpha=0$, and so $\bmidsq\leq 3$.

When $d=2$, our bounds $\xmid$ and $\bmidsq$ become
\[
 \xmid=\frac{5\alpha-7+\sqrt{49-\alpha(22-9\alpha)}}{2\alpha}\quad\bmidsq=\frac{3\alpha-2+\sqrt{(49-\alpha(22-9\alpha)}}{2}.
\]
Again, we differentiate $\bmidsq$ with respect to $\alpha$ and simplify, finding
\[
 \frac{\partial\bmidsq}{\partial\alpha}=\frac{320}{(2\sqrt{49-\alpha(22-9\alpha)})(3\sqrt{49-\alpha(22-9\alpha)}+(11-9\alpha))}.
\]
We also do this with $\xmid$:
\[
 \frac{\partial\xmid}{\partial\alpha}=\frac{160}{\sqrt{49-\alpha(22-9\alpha)}(7\sqrt{49-\alpha(22-9\alpha)}+(49-11\alpha))}.
\]
So when our dimension $d=2$, we have that both $\xmid$ and $\bmidsq$ increase with $\alpha$, and so taking our largest value of $\alpha=51/97$, we see $\xmid\leq1.9$ and $\bmidsq\leq3.5$.

We now show that $l(r)$ is positive even when $\tau\leq\tmid$. Since we've already established the positivity of $C$ and $D$, we have that
\[
 l(r)\geq Aj_1(ar)+\gamma B i_1(br).
\]
Since we assume $\tau<\tmid$, we have $A<0<B$ and will apply the bounds from Lemma~\ref{bblem}
\[
 -j_1(z)\geq-c_{1,0}z\qquad\text i_1(z)\geq c_{1,0}z,
\]
where $c_{1,0}$ is a positive constant coming from the series expansion and depends only on the dimension. Then
\begin{align*}
 l(r)&\geq c_0r(Aa+\gamma Bb)\\
 &=c_0r\left(a\Big((d+2)\tau-a^2(3-\alpha(d-1)+\alpha a^2)\Big)\right.\\
 &\qquad\left.+\gamma b\Big((d+2)\tau+b^2(3-\alpha(d-1)-\alpha b^2)\Big)\right).
\end{align*}
Solving for $\gamma$, we see the above is positive if and only if
\[
 \gamma\geq\frac{a}{b}\left(\frac{a^2(3-\alpha(d-1)+\alpha a^2)-(d+2)\tau}{b^2(3-\alpha(d-1)-\alpha b^2)+(d+2)\tau}\right)=:\gstar.
\]
As in the higher-dimension case, we will find a rational lower bound on $\gamma$ so that we can prove nonnegativity of a polynomial rather than a transcendental quantity involving Bessel functions. As before, we can bound $\gamma$ below by setting $\alpha=1/(d-1)$, and then apply Lemma~\ref{bblem}. This yields the bound
\[
 \gamma\geq\frac{a^2j_2'(a)}{b^2i_2'(b)}=\frac{a^3((d+4)-a^2)}{b^3\left((d+4)+k(\bmidsq) b^2\right)},
\]
where the constant $k(M)$ is given in Lemma~\ref{bblem} as
\[
 k(M)=\frac{1}{2}+\frac{2}{M}\left(e^{M/4}-1\right).
\]
For $d=2,3$, our bounds on $\bmidsq$ both give us $k(\bmidsq)<1.3$. Then $\gamma-\gstar$ is nonnegative whenever
\begin{align*}
 &a^2(d+4-a^2)\Big(b^2(3-\alpha(d-1)-\alpha b^2)+(d+2)\tau\Big)\\
 &\qquad+b^2(d+4+1.3b^2)\Big(a^2(3-\alpha(d-1)+\alpha a^2)-(d+2)\tau\Big)\geq0.
\end{align*}
As in previous proofs, we write $x=a^2$ and $b^2=x+\tau$, and so the polynomial above can be written as
\begin{align*}
 p(x&,\tau)=\frac{13(d+2)}{10}\tau^3\\
 &\qquad+\frac{1}{10}\Big(-3\alpha x^2+(13(2d+1)+\alpha(3d-53))x+10(d+2)(d+4)\Big)\tau^2\\
 &\qquad+\frac{x}{10}\Big(-6\alpha x^2-3(34-d+2\alpha(26-d))x+20(d+2)(d+4)\Big)\tau\\
 &\qquad-\frac{x^3}{10}\Big(3x\alpha+69+103\alpha-3d\alpha\Big).
\end{align*}
The coefficient of $\tau^3$ is positive by inspection. We will show the coefficients of $\tau$ and $\tau^2$ are also positive for our values of $d$, $\alpha$, and $x$. Note that we're assuming both $\tau\leq\tmid$ and $\tau\leq\tmax$, so we also have $x\leq\xmax$ and $x\leq\xmid$. For $d=3$, this means $x\leq \xmax=45/14<3.22$; for $d=2$, this means $x\leq\xmid<1.852$.

Let's look at the coefficient of $\tau^2$:
\[
 -3\alpha x^2+(13(2d+1)+\alpha(3d-53))x+10(d+2)(d+4).
\]
When $d=3$, we have $\alpha\leq1/2$ and so this becomes
\[
 -3\alpha x^2+(91-44\alpha)x+350\geq -\frac{3}{2}x^2+69x+350.
\]
The right-hand side has roots at $x\approx-4.6, 50.6$ and so is positive for the values of $x$ under consideration, that is, $0\leq x\leq 3.22$.

When $d=2$, we assume $\alpha\leq 51/97$ and so the coefficient of $\tau^2$ becomes
\[
 -3\alpha x^2+(65-47\alpha)x+240\geq -\frac{153}{97}x^2+\frac{3908}{97}x+240.
\]
The right-hand side has roots at $x\approx -4.98, 30.5$ and so is positive for the values of $x$ under consideration, $0\leq x\leq 1.852$.

Finally, we consider the coefficient of $\tau$:
\[
 -6\alpha x^2-3(34-d+2\alpha(26-d))x+20(d+2)(d+4).
\]
When $d=3$, this becomes
\[
 -6\alpha x^2-3(31+46\alpha)x+700\geq -3x^2-162x+700.
\]
The right-hand side quadratic has roots at $x\approx-58.02, 4.02$ and so is positive for the values of $x$ under our consideration.

When $d=2$, our coefficient becomes
\[
 -3\alpha x^2-24(2+3\alpha)x+240\geq-\frac{153}{97}x^2-\frac{8328}{97}x+240.
\]
The right-hand side quadratic has roots at $x\approx-57.1, 2.67$ and so is positive for the values of $x$ under our consideration, $0\leq x\leq 1.852$.

Thus we have shown that $p$ is increasing in $\tau$ for both $d=2,3$ for the values of $x$, $\tau$, and $\alpha$ under consideration, and so $p$ is minimal when $\tau$ is. From Lemma~\ref{atbounds} we have $\tau>x^2/(d+2-x)=\tmin$.

When $d=3$, we have
\begin{align*}
 p\left(x,\frac{x^2}{5-x}\right)&=\frac{5x^3}{2(5-x)^3}m_3(x)\qquad\text{where}\\
 m_3(x)&:=-(5-x)(9407x)\alpha-2x^3+28x^2-96x+355.
\end{align*}
Recall that we wish to show $p$ is nonnegative; it suffices to show $m_3(x)$ is nonnegative. Note that $m_2$ is decreasing in $\alpha$, so taking $\alpha=1/2$ we can make the following lower-bound estimate:
\begin{align*}
 m_3(x)&\geq -2x^3+\frac{49}{2}x^2-\frac{63}{2}x+120\\
 &\geq -2x^2\xmax+\frac{49}{2}x^2-\frac{63}{2}\xmax+120\\
 &\geq18.05x^2+18.56,
\end{align*}
which is positive for all $x$. Thus $m_3(x)$ and hence $p(x,\tmin)$ are both positive for all $x$ and $\alpha$ under consideration.

When $d=2$, we instead have
\begin{align*}
 p\left(x,\frac{x^2}{4-x}\right)&=\frac{4x^3}{5(4-x)^3}m_2(x)\qquad\text{where}\\
 m_2(x)&:=-(4-x)(194-19x)\alpha-5x^3+55x^2-138x+408.
\end{align*}
Note that $m_2$ is decreasing in $\alpha$, since we have $0\leq x\leq\xmid<1.86$. So $m_2$ is minimized when we take $\alpha=51/97$, giving us
\[
 m_2(x)\geq\frac{x}{97}\Big(-485x^2+4366x+384\Big)
\]
The roots of this right-hand side cubic are real and occur at $x=0$ and $x\approx-0.09,9.09$. Since the coefficient of $x^3$ is negative, this means $m_2(x)$ is nonnegative for $0\leq x\leq1.86$ as desired.

This completes our proof, as we have shown that $p$ is nonnegative for all $\tau$, $d$, $\alpha$, and $x$ under consideration, and hence $\gamma-\gstar$ is nonnegative, which was sufficient to establish positivity of $l(r)$.
\end{proof}

\section{Completing the proof}\label{proofsec}
Now that we have established the desired monotonicity of our quotient, we need two more lemmas before we can prove the isoperimetric inequality for the free plate under tension. Our first of these is a special case of more general rearrangement inequalities:
\begin{lemma}\label{monint}\cite[Lemma 14]{inequalitypaper}
For any radial function function $F(r)$ that satisfies the partial monotonicity condition \eqref{moncond} for $\Ostar$,
\[
\int_\Omega F\,dx \leq \int_{\Ostar}F\,dx
\]
with equality if and only if $\Omega=\Ostar$.
For any strictly increasing radial function $F(r)$,
\[
\int_\Omega F\,dx \geq \int_{\Ostar}F\,dx
\]
with equality if and only $\Omega=\Ostar$.
\end{lemma}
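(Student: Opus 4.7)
The plan is to use the classical symmetric difference argument that underlies Steiner-style rearrangement proofs. Since $|\Omega|=|\Ostar|$, the sets $B:=\Omega\setminus\Ostar$ (the part of $\Omega$ outside the symmetrized ball) and $C:=\Ostar\setminus\Omega$ (the part of $\Ostar$ not in $\Omega$) have equal measure, and writing $A:=\Omega\cap\Ostar$ we get the identity
\[
\int_\Omega F\,dx-\int_{\Ostar}F\,dx=\int_B F\,dx-\int_C F\,dx,
\]
since the integrals over $A$ cancel. The entire proof reduces to showing that the right-hand side has the correct sign under each hypothesis.

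For the first assertion, I would observe that $B$ lies entirely outside $\Ostar$ while $C$ lies entirely inside $\Ostar$, so the partial monotonicity condition \eqref{moncond} applied with $\Omega$ replaced by $\Ostar$ gives $F(y)<F(x)$ for every $y\in B$ and $x\in C$. Taking suprema and infima yields $\sup_B F\le\inf_C F$, and combined with $|B|=|C|$ this produces
\[
\int_B F\,dx\le|B|\,\sup_B F\le|C|\,\inf_C F\le\int_C F\,dx,
\]
which gives the stated inequality. For the second assertion, I would use that $F$ is radial and strictly increasing, so at every point $y\in B$ the radius satisfies $|y|>R^*$ (the radius of $\Ostar$) while at every $x\in C$ we have $|x|<R^*$; consequently $F(y)>F(R^*)>F(x)$, and the same two-line estimate runs in the opposite direction, yielding $\int_\Omega F\ge\int_{\Ostar}F$.

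For the equality case, both chains of inequalities are strict as soon as $|B|=|C|>0$: in the first argument the hypothesis $F(x)>F(y)$ is strict, and in the second the strict monotonicity of $F$ forces a strict gap between values on $B$ and values on $C$. Thus equality forces $|B|=0$, i.e.\ $\Omega\subseteq\Ostar$ up to a null set, and combined with $|\Omega|=|\Ostar|$ this gives $\Omega=\Ostar$ almost everywhere. I do not expect a real obstacle here — the only subtle point is making sure the strict comparison on $B$ versus $C$ really propagates to strict inequality in the integral, which is handled by the usual argument that if $|B|>0$ then some portion of $B$ must have $F$-value uniformly bounded below the infimum on $C$, using either radial monotonicity or the strict form of \eqref{moncond} directly. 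Since the statement is essentially cited from \cite[Lemma~14]{inequalitypaper}, no further machinery beyond this decomposition is needed.
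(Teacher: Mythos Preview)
Your argument is correct and is precisely the standard symmetric-difference proof of this rearrangement lemma. The paper itself does not give a proof here; the lemma is simply quoted from \cite[Lemma~14]{inequalitypaper}, so there is no alternative approach to compare against. One small remark on the equality case: your sketch can be made fully rigorous without any extra ``uniform gap'' argument. With $s:=\operatorname{ess\,sup}_B F$, the strict comparison $F(x)>F(y)$ forces $F\ge s$ a.e.\ on $C$, giving $\int_B F\le |B|\,s=|C|\,s\le\int_C F$; equality throughout would require $F\equiv s$ a.e.\ on both $B$ and $C$, contradicting the strict inequality in~\eqref{moncond}. So the equality case is clean and needs no further work.
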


The final lemma describes how the eigenvalues change with the dilation of the region, and is used in the proof of the theorem to show we need only consider $\Omega$ with volume equal to that of the unit ball. We will use the notation $s\Omega := \{x\in\RR^d:x/s\in\Omega\}$ for $s>0$.
\begin{lemma}\label{scaling} (Scaling) For all $s>0$, we have
\[
 \omega(\tau,\sigma, \Omega)=s^{4}\omega(s^{-2}\tau,\sigma, s\Omega).
\]
\end{lemma}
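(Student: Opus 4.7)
The plan is to prove the scaling relation by a change of variables in the Rayleigh quotient, using the variational (Rayleigh-Ritz) characterization of $\omega_1$.

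Concretely, given $u \in H^2(\Omega)$ with $\int_\Omega u\,dx = 0$, I would define $v(y) := u(y/s)$ for $y \in s\Omega$. Then $v \in H^2(s\Omega)$, and since the change of variables $y = sx$ is a volume-preserving bijection (up to the Jacobian $s^d$), the condition $\int_{s\Omega} v\,dy = s^d \int_\Omega u\,dx = 0$ is preserved. Conversely any admissible trial function on $s\Omega$ pulls back to one on $\Omega$, so this is a bijection between admissible trial function classes.

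Next I would compute how each piece of the Rayleigh quotient transforms. Using the chain rule, $Dv(y) = s^{-1}(Du)(y/s)$, $D^2 v(y) = s^{-2}(D^2 u)(y/s)$, and $\Delta v(y) = s^{-2}(\Delta u)(y/s)$. Substituting and then changing variables $y = sx$, with $dy = s^d dx$, I would obtain
\begin{align*}
\int_{s\Omega}\Big((1-\sigma)|D^2v|^2 + \sigma(\Delta v)^2 + \tau'|Dv|^2\Big)\,dy
&= s^{d-4}\int_\Omega\Big((1-\sigma)|D^2u|^2 + \sigma(\Delta u)^2 + s^2\tau'|Du|^2\Big)\,dx,\\
\int_{s\Omega} v^2\,dy &= s^d \int_\Omega u^2\,dx.
\end{align*}
Choosing $\tau' = s^{-2}\tau$ so that $s^2\tau' = \tau$, the ratio of these gives
\[
Q_{s\Omega,\tau',\sigma}[v] = s^{-4}\, Q_{\Omega,\tau,\sigma}[u].
\]

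Finally I would take the infimum over admissible trial functions on both sides. Because the bijection $u \leftrightarrow v$ respects the admissibility constraint (the orthogonality to constants), this yields $\omega_1(s^{-2}\tau,\sigma,s\Omega) = s^{-4}\,\omega_1(\tau,\sigma,\Omega)$, which rearranges to the stated identity. There is no genuine obstacle here; the only mild care needed is to verify that each differential quantity indeed scales by the advertised power of $s$ and that the admissibility constraint transforms correctly — both of which are immediate. One could alternatively verify the identity by checking that $v$ satisfies the rescaled eigenvalue PDE $\Delta^2 v - \tau'\Delta v = \omega' v$ with $\omega' = s^{-4}\omega$ together with the natural boundary conditions $Mv = 0$, $Vv = 0$ on $\partial(s\Omega)$, but the Rayleigh-quotient route is cleaner and avoids having to re-examine the boundary conditions term by term.
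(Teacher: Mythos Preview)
Your proposal is correct and follows exactly the standard change-of-variables argument in the Rayleigh quotient that the paper has in mind; the paper itself omits the proof, noting only that it is ``straightforward and nearly identical'' to the $\sigma=0$ case in \cite[Lemma~15]{inequalitypaper}, which is precisely the computation you have written out.
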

The proof is straightforward and nearly identical to that of \cite[Lemma 15]{inequalitypaper}, and so not repeated here.

We can now prove our main result.
\begin{proof}[Proof of Theorem~\ref{mainthm}]
Once we have established inequality \eqref{mainineq} for all regions $\Omega$ of volume equal to that of the unit ball and all $\tau>0$, we obtain \eqref{mainineq} for regions of arbitrary volume, since
\[
\omega(\tau,\sigma, \Omega)=s^{4}\omega(s^{-2}\tau,\sigma,s\Omega)\leq s^{4}\omega(s^{-2}\tau,\sigma, s\Ostar)=\omega(\tau,\Ostar),
\]
for all $s>0$ by Lemma~\ref{scaling}.

Thus it suffices to prove the theorem for $\Omega$ with volume equal to that of the unit ball, so that $\Ostar$ is the unit ball. We may also translate $\Omega$ as in Lemma~\ref{trialfcn}, which leaves the fundamental tone unchanged. Then,
\begin{align*}
\omega &\leq \frac{\int_\Omega N[\rho]\,dx}{\int_\Omega \rho^2\,dx} &&\text{by Lemma~\ref{lemmaboundRC}}\\
&\leq \frac{\int_\Ostar N[\rho]\,dx}{\int_\Ostar \rho^2\,dx}&&\text{by Lemmas~\ref{mondenom}, \ref{monnum}, and~\ref{monint}}\\
&=\omega^*,
\end{align*}
by applying the equality condition in Lemma~\ref{lemmaboundRC}. Finally, if equality holds, then $\Omega$ must be a ball, by the equality statement in Lemma~\ref{monint}.
\end{proof}

\section*{Acknowledgments} 
This research was partially supported by the University of Minnesota's Faculty Development Single Semester Leave. I would also like to thank Richard Laugesen for tirelessly providing advice and wisdom on matters both mathematical and professional.


\begin{thebibliography}{9}

\bibitem{AShandbook}
M. Abramowitz and I. A. Stegun (eds.). \textrm{Handbook of Mathematical Functions}, Appl. Math. Ser. No. 55, National Bureau of Standards, Washington, D.C., 1964. (Reprinted by Dover, New York, 1965).

\bibitem{BCHT13} B. Brandolini, F. Chiacchio, A. Henrot, and C. Trombetti. \emph{An optimal Poincaré-Wirtinger inequality in Gauss space.} Math. Res. Lett. \textbf{20} (2013), no. 3, 449–457. 

\bibitem{BCT13} B. Brandolini, F. Chiacchio, and C. Trombetti. \emph{A sharp lower bound for some Neumann eigenvalues of the Hermite operator.} Differential Integral Equations \textbf{26} (2013), no. 5-6, 639–654. 

\bibitem{CdB12}
F. Chiacchio, G. Di Blasio. \emph{Isoperimetric inequalities for the first Neumann eigenvalue in Gauss space.} Ann. Inst. H. Poincar\'e Anal. Non Lin\'eaire \textbf{29} (2012), no. 2, 199–216. 

\bibitem{AB95plate}
M. S. Ashbaugh and R. Benguria. \emph{On Rayleigh's conjecture for the clamped plate and its generalization to three dimensions}, Duke Math. J., \textbf{78} (1995), 1--17.

\bibitem{AB95}
M. S. Ashbaugh and R. Benguria. \emph{Sharp upper bound to the first nonzero Neumann eigenvalue for bounded domains in spaces of constant curvature.} J. London Math. Soc. (2) 52 (1995), no. 2, 402–416.

\bibitem{AL96}
M. S. Ashbaugh and R. S. Laugesen. \emph{Fundamental tones and buckling loads of clamped plates.}
Ann. Scuola Norm. Sup. Pisa Cl. Sci. (4) \textbf{23} (1996), no. 2, 383--402.

\bibitem{besselpaper}
L. M. Chasman. \emph{Vibrational modes of circular free plates under tension.}  Appl. Anal. \textbf{90} (2011), no. 12, 1877–1895. 

\bibitem{inequalitypaper}
L. M. Chasman. \emph{An isoperimetric inequality for fundamental tones of free plates}. Comm. Math. Phys. \textbf{303} (2011), no. 2, 421–449

\bibitem{cthesis}
L. M. Chasman. \emph{Isoperimetric problem for eigenvalues of free plates.} Ph.D thesis, University of Illinois at Urbana-Champaign, 2009. arXiv:1004.0016 [math.SP]

\bibitem{BL14}
D. Buoso and P. D. Lamberti \emph{Shape deformation for vibrating hinged plates.}
Math. Methods Appl. Sci. \textbf{37} (2014), no. 2, 237–244. 

\bibitem{Faber23}
G. Faber. \emph{Beweis, dass unter allen homogenen Membranen von gleicher Fl\"ache und gleicher Spannung die kreisf\"omige den tiefsten Grundton gibt}, Sitzungberichte der mathematisch-physikalischen Klass der Bayerischen Akademie der Wissenschaften zu M\"unchen Jahrgang, 1923, 169--172.

\bibitem{GT}
D. Gilbarg and N. S. Trudinger. \textrm{Elliptic Partial Differential Equations of Second Order}. Springer-Verlag, Berlin, 2001. (Reprint of 1998 edition.)

\bibitem{KS52}
E.T. Kornhauser and I. Stakgold, \emph{A variational theorem for $\nabla^2u+\lambda u=0$ and its application}. J. Math. and Phys. \textbf{31} (1952), 45--54.

\bibitem{krahn25}
E. Krahn. \emph{\"Uber eine von Rayleigh formulierte Minimaleigenschaft des Kreises},  Math. Ann.  \textbf{94}  (1925),  no. 1, 97--100.

\bibitem{krahn26}
E. Krahn. \emph{\"Uber Minimaleigenschaften der Kugel in drei und mehr Dimension.} Acta Comm. Univ. Tartu \textbf{A9}, no. 1 (1926) 1--44. [English translation: Minimal properties of the sphere in three and more dimensions, Edgar Krahn 1984-1961: A Centenary Volume, \"U. Lumiste and J. Peetre, editors, IOS Press, Amsterdam, 1994, Chapter 11, 139--174.]

\bibitem{liebloss}
E. H. Lieb and M. Loss. \textrm{Analysis}. Second edition. Graduate Studies in Mathematics, 14. American Mathematical Society, Providence, RI, 2001.

\bibitem{LS94}
L. Lorch and P. Szego. \emph{Bounds and monotonicities for the zeros of derivatives of ultraspherical Bessel functions.} SIAM J. Math. Anal. \textbf{25} (1994), no. 2, 549--554.

\bibitem{N92}
N. S. Nadirashvili. \emph{New isoperimetric inequalities in mathematical physics}. Partial differential equations of elliptic type (Cortona, 1992), 197--203, Sympos. Math. XXXV, Cambridge Univ. Press, Cambridge, 1994

\bibitem{N95}
N. S. Nadirashvili. \emph{Rayleigh's conjecture on the principal frequency of the clamped plate}, Arch. Rational Mech. Anal., \textbf{129} (1995), 1--10.

\bibitem{NS07}
S.A. Nazarov and G. Sweers. \emph{A hinged plate equation and iterated Dirichlet Laplace operator on domains with concave corners.} J. Differential Equations \textbf{233}(1), (2007), 151--180. 

\bibitem{Nir55}
L. Nirenberg. \emph{Remarks on strongly elliptic partial differential equations.} Communications in Pure and Applied Mathematics \textbf{8} (1955), 649--675.

\bibitem{SH77}
R. E. Showalter. \textrm{Hilbert space methods in partial differential equations.} Dover Publications Inc, New York, 2010, p 87-88. 

\bibitem{S50}
G. Szeg\H o. \emph{On membranes and plates.} Proc. Nat. Acad. Sci., \textbf{36} (1950), 210--216.

\bibitem{Serrata}
G. Szeg\H o. \emph{Note to my paper ``On membranes and plates''.} Proc. Nat. Acad. Sci. (USA) \textbf{44} (1958), 314--316.

\bibitem{T81}
G. Talenti. \emph{On the first eigenvalue of the clamped plate}. Ann. Mat. Pura Appl. (Ser. 4), \textbf{129} (1981), 265--280.

\bibitem{taylor}
M. E. Taylor. \textrm{Partial Differential Equations. I. Basic Theory}. Applied Mathematical Sciences, 115. Springer-Verlag, New York, 1996.

\bibitem{verchota}
G. C. Verchota. \emph{The biharmonic Neumann problem in Lipschitz domains.} Acta Math. \textbf{194} (2005), no. 2, 217--279.

\bibitem{W56}
H.F. Weinberger. \emph{An isoperimetric inequality for the $N$-dimensional free membrane problem.}
J. Rational Mech. Anal. \textbf{5} (1956), 633--636.

------

\end{thebibliography}
\end{document}